\newtheorem{theorem}{Theorem}[section]
\newtheorem{lemma}[theorem]{Lemma}
\newtheorem{corollary}[theorem]{Corollary}
\newtheorem{definition}[theorem]{Definition}
\numberwithin{equation}{section}
\def\D{\mathscr D}
\def\DO{\mathcal D}
\def\RE{\mathbb R}
\def\CO{{\mathbb C}}
\def\N{\mathbb N}
\def\H{\mathcal H}
\def\F{\mathcal F}
\def\C{\mathcal C}
\def\L{\mathcal L}
\def\K{\mathcal K}
\def\A{\mathcal A}
\def\B{\mathcal B}
\def\P{\mathcal P}
\def\la{\lambda}
\def\max{\text{\rm max}}
\def\wh{\widehat}
\def\ol{\overline}
\begin{document}

\title[Singular potentials in one dimension]
{One-dimensional Schr\"odinger operators with singular potentials: A Schwartz distributional formulation}

%\author{Nuno Costa Dias}
%\address{Universidade Lus\'ofona de Humanidades e
%Tecnologias, Av. Campo Grande 376, 1749-024 Lisboa, Portugal and
%Grupo de F\'{\i}sica Matem\'atica, Universidade de Lisboa, Av.
%Prof. Gama Pinto 2, 1649-003, Lisboa, Portugal}
%\email{ncdias@meo.pt, cristina.goncalves.jorge@gmail.com,
%joao.prata@mail.telepac.pt}

%\author{Cristina Jorge}

%\author{Jo\~{a}o Nuno Prata}

\author{Nuno Costa Dias
%\textbf{\thanks{ncdias@meo.pt}}}
\and Cristina Jorge
%\textbf{\thanks{cristina.goncalves.jorge@gmail.com}}
\and Jo\~{a}o Nuno Prata
%\textbf{\thanks{joao.prata@mail.telepac.pt}}
}

%\keywords{Self-Adjoint Extensions, Kre\u \i n's Resolvent Formula,
%Elliptic Differential Operators}
%\thanks{{\it Mathematics Subject Classification (2000).} 47B25
%(primary),
%47B38, 35J25 (secondary)}

\begin{abstract}
Using an extension of the H\"ormander product of distributions, we
obtain an intrinsic formulation of one-dimensional Schr\"odinger
operators with singular potentials. This formulation is entirely
defined in terms of standard {\it Schwartz} distributions and does
not require (as some previous approaches) the use of more general
distributions or generalized functions. We determine, in the new
formulation, the action and domain of the Schr\"odinger operators
with arbitrary singular boundary potentials. We also consider the
inverse problem, and obtain a general procedure for constructing
the singular (pseudo) potential that imposes a specific set of
(local) boundary conditions. This procedure is used to determine
the boundary operators for the complete four-parameter family of
one-dimensional Schr\"odinger operators with a point interaction.
Finally, the $\delta$ and $\delta'$ potentials are studied in
detail, and the corresponding Schr\"odinger operators are shown to
coincide with the norm resolvent limit of specific sequences of
Schr\"odinger operators with regular potentials.
\end{abstract}

{\maketitle }

{\bf Keywords.} {Schr\"odinger operators; singular potentials;
point
interactions; products of distributions; quantum systems with boundaries}\\

{\bf AMS subject classifications.} {34L40, 81Q10, 81Q80, 34B09}

\begin{section}{Introduction.}
Let $\widehat H_0=-D^2_x$ be the free Schr\"odinger operator with domain in the Sobolev-Hilbert space $\H^2(\RE)
\subset \L^2(\RE)$, and let $\widehat S$ be its symmetric restriction to the set $\D(\RE \backslash \{0\})$ of
smooth functions with support on a compact subset of $\RE \backslash \{0\}$. The self-adjoint (s.a.) extensions
of $\widehat S$ are usually called Schr\"odinger operators with point interactions
\cite{Albeverio1,Albeverio2,Berezin}. We shall denote them generically by $\widehat L$. In the one-dimensional
case they form a four-parameter family of operators, each of which is characterized by two boundary conditions
at $x=0$. These objects yield exactly solvable models
\cite{Albeverio1,Albeverio2,Albeverio3,Albeverio4,Albeverio6,Berezin,Brasche,Exner,Kostenko,Seba1} and have been
widely used in applications in quantum mechanics (e.g. in models of low-energy scattering
\cite{Albeverio7,Bolle1,Bolle2,Jensen} and quantum systems with boundaries
\cite{Dias5,Dias4,Dias3,Garbaczewski,Hirokawa}), condensed matter physics \cite{Avron,Cheon,Exner2} and, more
recently, on the approximation of thin quantum waveguides by quantum graphs
\cite{Albeverio0,Cacciapuoti1,Cacciapuoti2,Antonio}.

An interesting topic is the relation between the operators $\widehat L$ and the additive perturbations of the
operator $\widehat H_0$ by sharply localized potentials. The books \cite{Albeverio1,Albeverio2,Koshmanenko} and
the papers \cite{Albeverio4,Albeverio5,Berezin,Exner,Golovaty3,Kurasov1,Nizhnik2,Seba0,Seba2,Zolotaryuk2}
provide a detailed discussion and an extensive list of references on this subject. In the present work we will
address the problem, that emerges naturally in this context, of constructing a boundary potential formulation of
the operators $\widehat L$ that is entirely defined in terms of standard {\it Schwartz} distributions.

Let us consider this problem in more detail. The aim is to determine, for each $\widehat L$, a {\it boundary
potential operator} $\widehat B=B\cdot$, acting by multiplication by a {\it Schwartz} distribution $B$ (called
the {\it boundary potential}), and such that:
\begin{equation}
\widehat L=\widehat H_0+\widehat B
\end{equation}
where $\widehat H_0=-D_x^2$ is defined in the generalized sense. Since $\widehat L \psi =\widehat S \psi$ for
all $\psi \in \DO(\widehat S)=\D(\RE \backslash \{0\})$, the distribution $B$ has to be supported at $x=0$
(assuming that the product $\cdot$ is local). Hence, for $\psi \in \DO(\widehat L)$, the term $\widehat B \psi$
will in general stand for the product of a singular distribution by a discontinuous function. Such a product is
not well defined within the standard theory of Schwartz distributions and so, unless some additional structure
is introduced, the r.h.s. of (1.1) has only a formal meaning.

A common {\it interpretation} of the formal operator $\widehat H_0+\widehat B$ is that it stands for the limit
(e.g. in the norm resolvent sense) of sequences of operators of the form
\begin{equation}
\widehat H_n=\widehat H_0+\widehat B_n \quad , \quad \widehat B_n=B_n \cdot
\end{equation}
where $B_n$ belongs to some space of regular functions and
satisfies $B_n \longrightarrow B$ in $\D'$. Sequences of this kind
have been used in many applications, (see, for instance,
\cite{Golovaty3} and the references therein), and their
convergence properties were studied for particular cases
\cite{Albeverio0,Cacciapuoti1,Cacciapuoti2,Christiansen,Golovaty3,Seba0,Zolotaryuk0,Zolotaryuk2}.
Unfortunately, the relation between the convergence of $B_n$ in
$\D'$ and the convergence of the associated operators $\widehat
H_n$ in the norm resolvent is not straightforward
\cite{Brasche,Golovaty1,Golovaty2,Zolotaryuk2,Zolotaryuk1}. One
finds that different sequences of regular potentials, converging
to the same distribution, may lead to sequences of operators (1.2)
converging to distinct operators. The case $B=\delta'$ was studied
in detail. It was found that the formal operator $\widehat
H_0+\widehat\delta'$ stands (in the sense above) for, at least, a
one-parameter family of distinct s.a. Schr\"odinger operators with
a point interaction \cite{Golovaty3,Seba0,Zolotaryuk1}.

An interesting problem is then whether the operators $\widehat H_0+\widehat B$ admit an intrinsic formulation in
terms of distributions and also, whether such formulation coincides with the norm resolvent limit of suitable
sequences of Schr\"odinger operators with regular potentials. The first of these problems was addressed by P.
Kurasov \cite{Kurasov1} (and extended to higher order linear operators by Kurasov and Boman \cite{Kurasov2})
using a theory of distributions acting on discontinuous test functions (see also [chapter 3, \cite{Albeverio2}]
and \cite{Kurasov3,Nizhnik2}). Let us denote by $\widetilde\K$ the set of $\C^{\infty}(\RE \backslash
\{0\})$-functions that display (together with all their derivatives) finite lateral limits at $x=0$, and let
$\K$ be the set of $\widetilde\K$-functions with bounded support. The set of Kurasov distributions is the dual
of $\K$, denoted by $\K'$. As usual, one can introduce most of the algebraic operations in $\K'$. One can also
introduce a distributional derivative $D_\K$ and, more importantly, a product $\star_\K$ of an element of $\K'$
by an element of $\widetilde\K$. This product is defined for all $F \in \K'$ and $\psi\in \widetilde\K$ by:
$$
\langle F \star_\K \psi,\xi\rangle =\langle F,\psi\xi \rangle \quad , \quad \forall \xi\in \K
$$
where $\langle \, , \, \rangle$ denotes the distributional bracket. Using the product $\star_\K$ one may define
the {\it Kurasov} boundary operators $\widehat B_\K\psi=B_\K \star_\K \psi$ where $B_\K$ is a {\it Kurasov}
distribution. It turns out that by considering a simple extension of the product $\star_\K$, the complete four
parameter family of Schr\"odinger operators $\widehat L$ can be written in the form (1.1) with $\widehat B=B_\K
\star_{\K}$ and $B_\K$ a (pseudo) potential (since in general it also involves the operator $D_\K$) from $\K'$
\cite{Albeverio2,Kurasov1}.

Besides Kurasov's product, the Colombeau formalism \cite{Colombeau} has also been used \cite{Antonevich} to
obtain a precise definition of the boundary potential operators $\widehat B$. One should notice, however, that
both formalisms are defined outside from the space of Schwartz distributions, which renders the formulation of
the Schr\"odinger operators considerably more intricate than in the standard Schwartz case. For instance, the
Kurasov operators $\widehat H_\K=-D_\K^2+\widehat B_\K$ are written in terms of distributions in $\K'$ and the
new derivative operator $D_\K$. It turns out that $D_\K$ is a complicated object that does not satisfy the
standard properties of a derivative operator (it does not satisfy the Leibnitz rule, it does not reproduce the
derivative of functions for smooth regular distributions and the derivative of a constant is not zero
\cite{Kurasov1}). Moreover, Kurasov's formulation yields operators of the form $\widehat H_\K:\K \rightarrow
\K'$ and so $\widehat H_\K \psi \notin \D'$, not even for $\psi \in \D$. If one needs to produce standard
distributional results (like when modelling point interactions), in the end one has to project down to $\D'$ the
results of the formulation in $\K'$.

An alternative approach would be to use {\it intrinsic} products
of Schwartz distributions in the definition of $\widehat B$. Such
products have been used in the past to obtain consistent
formulations of ordinary and partial differential equations with
singular terms \cite{Hormann,Sarrico}. However, and up to our
knowledge, this approach has never been considered in the context
of Schr\"odinger operators with singular potentials.

The main goal of this paper is then to follow the latter approach and show that a rigorous formulation of the
operators $\widehat H_0+\widehat B$ can be obtained strictly within the framework of the Schwartz distributional
theory. Our approach rests upon the definition of an {\it intrinsic} multiplicative product of Schwartz
distributions that we shall denote by $*$. This product was originally defined and studied in \cite{Dias1}. It
constitutes a generalization (to the case of possibly intersecting singular supports) of the H\"ormander product
of distributions with non-intersecting singular supports (pag.55, \cite{Hormander}). The product $*$ is
associative, distributive, non-commutative and local. It satisfies the Leibnitz rule with respect to the
Schwartz distributional derivative $D_x$ and reproduces the usual product of functions for regular
distributions. Contrary to what happens in the Colombeau and Kurasov cases, the product $*$ is defined (and is
an inner operation) in a subspace of $\D'$. This subspace, endowed with the product $*$ and the Schwartz
distributional derivative $D_x$, becomes an associative differential algebra that contains Kurasov's function
space $\widetilde\K$ and, more importantly, all the distributional derivatives of the elements of
$\widetilde\K$.

Using the product $*$ we can define the {\it boundary potential
operators}
\begin{equation}\label{B0}
\widehat B \psi =  \psi * B_1 + B_2* \psi \,
\end{equation}
for an arbitrary pair $B_1,B_2$ of Schwartz distributions with
support on a finite set. The operators $\widehat B$ are {\it
extensions} of the operators $B \cdot$ where $B=B_1+B_2$ and
$\cdot$ is the standard product of a distribution by a test
function. In particular, they are well-defined on the spaces of
discontinuous functions that are important for studying point
interactions. Let us denote the association between the operators
$\widehat B$ (\ref{B0}) and the corresponding boundary potentials
$B=B_1+B_2$ by $ B \leftrightarrow  \widehat B$. Notice that
different choices of the pair $B_1,B_2$ (satisfying $B_1+B_2=B$)
yield different operators $\widehat B$, and so the association $B
\leftrightarrow \widehat B$ is not one-to-one.

More generally, we can also define the {\it boundary pseudo
potential operators}
\begin{equation} \label{BPP0}
\wh B \psi = \sum_{i,j=0}^1 D_x^{i} \wh B_{ij} D_x^{j} \psi
\end{equation}
where $\wh B_{ij}$ are boundary potential operators of the form
(\ref{B0}). The operators (\ref{BPP0}) will be use to construct a
boundary pseudo potential formulation of all s.a. Schr\"odinger
operators $\wh L$.

The operators of the form (\ref{B0}) and (\ref{BPP0}) are particular examples of {\it boundary operators}. We
use this terminology to denote an arbitrary linear operator $\wh B$ for which exists a finite set $I \subset
\RE$, such that
\begin{equation}\label{BOP}
\mbox{supp}\,\,\, \wh B \psi \subseteq I \, , \quad \forall \psi
\in \DO(\wh B) \subset \D' \, .
\end{equation}

In this paper we study several properties of the operators
$\widehat H_0 +\widehat B$, when $\wh B$ is an operator of the
form (\ref{B0}), (\ref{BPP0}) or (\ref{BOP}). In the next section
we review, for the convenience of the reader, the definition and
the main properties of the distributional product proposed in
\cite{Dias1}. In section 3, we prove that the operators $\widehat
B$, given by (\ref{B0}), can be equivalently defined as weak
operator limits of particular sequences of operators of
multiplication by smooth functions; these sequences converge in
$\D'$ to the associated boundary potential $B$. In section 4, we
prove some general results about the operators of the form $\wh
H_0 + \wh B$, when $\wh B$ is an arbitrary boundary operator
(\ref{BOP}). In particular, we determine their action and domain,
explicitly. In section 5, we develop a general method for finding
a boundary pseudo potential operator (\ref{BPP0}) that imposes a
specific given set of boundary conditions. This method is then
used to obtain a boundary pseudo potential formulation of {\it
all} one-dimensional Schr\"odinger operators $\widehat L$. We also
show that such a representation is not possible (for all $\wh L$)
if only boundary potential operators of the form (\ref{B0}) are
used. Finally, in section 6, we study the operators (\ref{B0}) in
detail. For an arbitrary boundary potential $B=a \delta(x)+b
\delta'(x)$, we determine which operators $\widehat H_0+\widehat
B$, where $\widehat B \leftrightarrow B$, are s.a. and conversely,
which s.a. operators $\wh L$ admit a boundary potential
representation. The particular cases $B=a\delta(x)$ and $B= b
\delta'(x)$, $a,b \in \RE$, are then studied in detail. We show
that they yield families of Schr\"odinger operators that coincide
(exactly in the first case, and to a large extend in the second
case) with the families of norm resolvent limit operators that
were obtained in \cite{Golovaty2,Golovaty3}.

\bigskip

\noindent\textbf{Notation}. Operators are denoted by letters with
a hat and distributions by capital roman letters ($F$, $G$...).
The exception is the Dirac measure $\delta$. Generic functions are
denoted by lower case Greek letters ($\psi$, $\phi$, $\xi$...).
Lower case roman letters from the middle of the alphabet ($f$,
$g$, $h$...) are reserved for continuous functions and those from
the end of the alphabet ($s$, $t$, $u$...) for smooth functions of
compact support. Capital Greek letters ($\Omega$, $\Xi$...) denote
open subsets of $\RE$. The functional spaces are denoted by
calligraphic capital letters ($\L^2(\Omega)$, $\H^2(\Omega)$,
$\D(\Omega)$...). When $\Omega = \RE$ we usually write only
$\L^2$, $\H^2$, $\D$, etc. The domain of an operator $\widehat A$
is written $\DO(\widehat A)$ and the statement $\widehat A
\subseteq \widehat B$ means, as usual, that $\DO(\widehat A)
\subseteq \DO(\widehat B)$ and $\widehat A \psi= \widehat B \psi$
for all $\psi \in \DO(\widehat A)$.
\end{section}

\begin{section}{A multiplicative product of Schwartz distributions}

In this section we review the main properties of the
multiplicative product of distributions $*$ that was proposed in
\cite{Dias1}. For details and proofs the reader should refer to
\cite{Dias1}.

We start by presenting some basic definitions. Let the $n$th order
singular support of a distribution $F \in \D'$ (denoted $n$-sing
supp $F$) be the closed set of points where $F$ is not a
$\C^n$-function. More precisely: let $\Omega \subseteq \RE$ be the
largest open set for which there is $f \in \C^n(\Omega)$ such that
$F|_\Omega=f$ (where $F|_{\Omega}$ denotes the restriction of $F$
to $\D(\Omega)$). Then $n$-sing supp $F=\RE \backslash \Omega$.
This definition generalizes the usual definition of singular
support of a distribution. We have, of course, $m$-sing supp $F
\subseteq n$-sing supp $F$ for all $m \le n$ and $\infty$-sing
supp $F=$ sing supp $F$.

Another useful concept is the order of a distribution
\cite{Kanwal}: we say that $F \in \D'$ is of order $n$ (and write
$n=$ ord $F$) iff $F$ is the $n$th order distributional derivative
(but not a lower order distributional derivative) of a regular
distribution.

Let now $\C_p^{n}$ be the space of piecewise $n$ times continuously differentiable functions: $\psi \in
\C_p^{n}$ iff there is a finite set $I\subset \RE$ such that $\psi \in \C^{n}(\RE \backslash I)$ and the lateral
limits $\lim_{x \to x_0^{\pm}} \psi^{(j)} (x) $ exist and are finite for all $x_0 \in I$ and all $j$-order
derivatives of $\psi$, with $j=0,..,n$.

Finally, let $\F^n$ be the space of distributions $F \in \D'$ such
that supp $F$ is a finite set and ord $F \le n+1$.

A distributional extension of the sets $\C_p^n$ is then given by:

\begin{definition}
Let $\A^{n}=\C_p^n \oplus \F^n$, where the elements of $\C_p^n$
are regarded as distributions. Moreover, the space of
distributions of the form $F|_{\Omega}$, where $F \in \A^n$, is
denoted by $\A^n(\Omega)$.
\end{definition}

We remark that only the spaces $\A^0$ and $\A^1$ (and their
restrictions $\A^0(\Omega)$ and $\A^1(\Omega)$) will be used in
the rest of the paper (sections 3 to 6). Hence, for the remainder
of this and the next sections, the reader may always assume that
$n=0,1$.

Let us then consider the definition of $\A^n$. We have, of course,
$\C_p^{n} \subset \A^{n} \subset \D'$. All the elements of $\A^n$
are distributions of order (at most) $n+1$ and finite $n$th order
singular support. They can be written in the form $F=\Delta_F +f$,
where $\Delta_F \in \F^n$ and $f \in \C_p^n$. The next Theorem
states this property in a more explicit form:

\begin{theorem}
$F \in \A^{n}$ iff there is a finite set $I=\{x_1,...,x_m\}$ associated with a set of open intervals
$\Omega_i=(x_i,x_{i+1})$, $i=0,..,m$ (where $x_0=-\infty$ and $x_{m+1}=+\infty$) such that ($\chi_{\Omega_i}$ is
the characteristic function of $\Omega_i$):
\begin{equation}
F= \sum_{i=1}^m \sum_{j=0}^n c_{ij}\delta^{(j)}(x-x_i) + \sum_{i=0}^m f_i \chi_{\Omega_i}
\end{equation}
for some $c_{ij} \in \CO$ and $f_i \in \C^{n}$. We have,
necessarily, $n$-sing supp $F \subseteq I$.
\end{theorem}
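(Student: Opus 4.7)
The plan is to prove both implications, with the converse being a short verification and the direct implication reducing to two standard ingredients: Schwartz's structure theorem for distributions with point support, and the $\C^n$-extension of $\C_p^n$ functions to all of $\RE$.

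For the converse direction, assume $F$ has the form (2.1). The finite sum $\sum_{i,j} c_{ij}\,\delta^{(j)}(x-x_i)$ is supported on $I$ and has order at most $n+1$ (since $\delta^{(n)}$ is the $(n+1)$-st distributional derivative of the Heaviside function, which is regular), hence lies in $\F^n$. The sum $\sum_i f_i \chi_{\Omega_i}$ is piecewise $\C^n$ with possible discontinuities only on $I$, so it belongs to $\C_p^n$. Therefore $F \in \C_p^n \oplus \F^n = \A^n$, and since $F$ coincides with the $\C^n$ function $f_i$ on each $\Omega_i$, the inclusion $n$-sing supp $F \subseteq I$ follows.

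For the direct implication, take $F \in \A^n$ and invoke the direct sum decomposition $F = \Delta_F + f$ with $\Delta_F \in \F^n$ and $f \in \C_p^n$. Since supp $\Delta_F = \{y_1,\dots,y_k\}$ is finite and ord $\Delta_F \le n+1$, Schwartz's classical structure theorem for distributions supported at a point produces
$$
\Delta_F = \sum_{i=1}^{k}\sum_{j=0}^{n} c_{ij}\,\delta^{(j)}(x-y_i)
$$
for some $c_{ij} \in \CO$. For the $\C_p^n$ part, let $\{z_1,\dots,z_\ell\}$ be the finite set off which $f$ is $\C^n$, and let $\Omega'_0,\dots,\Omega'_\ell$ be the corresponding open intervals. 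By the definition of $\C_p^n$, the lateral limits of $f^{(j)}$, $j=0,\dots,n$, at each $z_s$ are finite, so the restriction $f|_{\Omega'_i}$ extends to a $\C^n(\RE)$ function $g_i$ (e.g.\ by prescribing a Taylor polynomial of order $n$ built from the lateral data on the complement of $\overline{\Omega'_i}$ and smoothing, or by a Seeley extension). As distributions, $f = \sum_i g_i\,\chi_{\Omega'_i}$.

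To conclude, I would set $I = \{y_1,\dots,y_k\}\cup\{z_1,\dots,z_\ell\}$, order its elements as $x_1 < \cdots < x_m$, and refine both representations to this common partition (subdividing a single $g_i \chi_{\Omega'_i}$ into two pieces whenever an extra point is inserted, and padding the $\delta$-sum with zero coefficients at the added $x_i$). Merging the two rewritten sums gives exactly the form (2.1), and the inclusion $n$-sing supp $F \subseteq I$ is immediate since $F$ then equals a $\C^n$ function on each $\Omega_i$. The main, though mild, obstacle is the $\C^n$-extension of $f|_{\Omega'_i}$ to all of $\RE$: it depends crucially on the finiteness of the lateral limits built into the definition of $\C_p^n$, without which one could not realize the piecewise part as a finite sum of products with characteristic functions of open intervals. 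The remaining steps are the invocation of Schwartz's theorem and bookkeeping.
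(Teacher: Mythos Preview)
The paper does not actually prove this theorem: Section~2 is explicitly a review of results from \cite{Dias1}, and all proofs in that section, including this one, are deferred to that reference. So there is no proof in the present paper to compare against.

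That said, your argument is correct and is precisely the natural one. The two ingredients you identify---Schwartz's structure theorem for distributions with finite support (combined with the order bound $\mathrm{ord}\,\Delta_F \le n+1$, which caps the derivatives of $\delta$ at order $n$) and the $\C^n$-extension of each piece of a $\C_p^n$ function to all of $\RE$---are exactly what is needed, and the bookkeeping step of passing to a common refinement $I$ is straightforward. One small remark: for the extension step you can be even more elementary than Seeley, since on each bounded interval $\overline{\Omega'_i}$ the function $f|_{\Omega'_i}$ extends to a $\C^n$ function on the closure (by the finite lateral limits), and then a polynomial of degree $n$ matching the $n+1$ boundary values $f^{(0)},\dots,f^{(n)}$ at each endpoint provides a global $\C^n$ extension without any machinery.
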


The product $*$ will be defined in the spaces $\A^n$. First, we
recall some basic definitions about products of distributions. Let
$\Xi \subseteq \RE$ be an open set. The standard product of $F \in
\D'(\Xi)$ by $g \in \C^\infty(\Xi)$ is defined by
$$
\langle Fg, t \rangle= \langle F, gt \rangle \quad , \quad \forall
t \in \D(\Xi) \, .
$$
Moreover, this product is also well-defined for all pairs $F \in
\A^n(\Xi)$, $g \in \C^n(\Xi)$, where $n\in \N_0$ (because ord $F
\le n+1$ and $gt \in \C^n_0(\Xi)$).

The H\"ormander product of distributions generalizes the previous
product to the case of two distributions with finite and disjoint
singular supports (pag.55, \cite{Hormander}). We present a
slightly more general definition which we call the {\it extended
H\"ormander product}.

\begin{definition}
For $n \in \N_0 $, let $F,G \in \A^n$ be two distributions such
that $n$-sing supp $F$ and $n$-sing supp $G$ are finite disjoint
sets. Then there exists a finite open cover of $\RE$ (denote it by
$\{\Xi_i \subset \RE,\, i=1,..,d \}$) such that, on each open set
$\Xi_i$, either $F$ or $G$ is a $\C^{n}(\Xi_i)$-function. Hence,
on each $\Xi_i$, the two distributions can be multiplied using the
previous product of a $\A^n$-distribution by a $\C^n$-function.
The extended H\"ormander product of $F$ by $G$ is then defined as
the unique distribution $F \cdot G \in \A^n$ that satisfies:
$$
F \cdot G|_{\Xi_i}= F|_{\Xi_i} G|_{\Xi_i} \quad , \quad  i=1,..,d.
$$
\end{definition}

Finally, the new product $*$ generalizes the extended H\"ormander
product to the case of an arbitrary pair of distributions in
$\A^n$:

\begin{definition}
The multiplicative product $*$ is defined for all $F,G \in \A^{n}$
by:
$$
F * G= \lim_{\epsilon \downarrow 0} F(x) \cdot G(x+\epsilon),
$$
where the product in $F(x) \cdot G(x+\epsilon)$ is the extended
H\"ormander product and the limit is taken in the distributional
sense.
\end{definition}

The explicit form of $F*G$ is given in Theorem 2.5 below and the
main properties of $*$ are stated in Theorem 2.6. Let $F,G \in
\A^{n}$ and let $I_F$ and $I_G$ be the $n$-singular supports of
$F$ and $G$, respectively. Let $I=I_F \cup I_G$ and write
explicitly $I=\{x_1,..,x_m\}$ (assuming $x_i < x_{i+1}$). Define
the open sets $\Omega_i=(x_i,x_{i+1})$, $i=0,..,m$ (with
$x_0=-\infty$ and $x_{m+1}=+\infty$). Then, in view of Theorem
2.2, $F$ and $G$ can be written in the form:
\begin{eqnarray}
F &=& \sum_{i=1}^m \sum_{j=0}^n a_{ij}\delta^{(j)}(x-x_i) + \sum_{i=0}^m f_i \chi_{\Omega_i} \nonumber \\
G &=& \sum_{i=1}^m \sum_{j=0}^n b_{ij}\delta^{(j)}(x-x_i) + \sum_{i=0}^m g_i \chi_{\Omega_i}
\end{eqnarray}
where $f_i,g_i \in \C^n(\RE)$ and $a_{ij}=0$ if $x_i \notin I_F$ or if $j \ge$ ord $F$, and likewise for $G$.
Then we have:
\begin{theorem}
Let $F,G \in \A^{n}$ be written in the form (2.2). Then $F*G$ is given explicitly by
$$
F * G = \sum_{i=1}^m \sum_{j=0}^n \left[ a_{ij} g_i (x) + b_{ij}
f_{i-1}(x) \right] \cdot \delta^{(j)}(x-x_i) + \sum_{i=0}^m f_i
g_i \chi_{\Omega_i}
$$
and $F*G \in \A^{n}$.
\end{theorem}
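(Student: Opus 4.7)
The plan is to work from Definition 2.4, which writes $F*G$ as the distributional limit of the extended H\"ormander products $F(x) \cdot G(x+\epsilon)$ as $\epsilon \downarrow 0$. First I would verify that for all sufficiently small $\epsilon > 0$ these products are well defined: the $n$-singular support of $G(\cdot+\epsilon)$ is the translate $I_G - \epsilon$, which is disjoint from $I_F$ once $\epsilon$ is smaller than the minimum gap between consecutive points of $I_F \cup I_G$. Definition 2.3 then produces a unique distribution $F(x)\cdot G(x+\epsilon) \in \A^n$ to which one can apply distributive laws and localization.

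Next I would decompose each of $F,G$ as in (2.2) into a singular part $F_s := \sum_{i,j} a_{ij}\delta^{(j)}(x-x_i)$ and a regular part $F_r := \sum_i f_i \chi_{\Omega_i}$ (similarly $G_s, G_r$), and expand $F(x) \cdot G(x+\epsilon)$ bilinearly into four contributions. The cross term $F_s(x) \cdot G_s(x+\epsilon)$ vanishes, because its two singular parts are supported on the disjoint sets $I_F$ and $I_G - \epsilon$, so the H\"ormander cover can be chosen with each factor identically zero near the support of the other. For $F_s(x) \cdot G_r(x+\epsilon)$, a small neighborhood $U_i$ of $x_i \in I_F$ satisfies $U_i \subset (x_i - \epsilon, x_{i+1} - \epsilon)$ for small $\epsilon$, so $G_r(\cdot+\epsilon)|_{U_i} = g_i(\cdot + \epsilon)$ is a $\C^n$ function, and the standard product of a distribution of order $\le n+1$ by a $\C^n$ function returns $a_{ij}\, g_i(x+\epsilon)\, \delta^{(j)}(x-x_i)$. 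The term $F_r(x) \cdot G_s(x+\epsilon)$ is treated symmetrically on a small neighborhood of each $x_i - \epsilon \in \Omega_{i-1}$, where $F_r = f_{i-1}$, giving $b_{ij}\, f_{i-1}(x)\, \delta^{(j)}(x - (x_i - \epsilon))$. Finally $F_r(x) \cdot G_r(x+\epsilon)$ is a product of two piecewise $\C^n$ functions with disjoint jump sets, equal to the piecewise pointwise product; on the bulk sub-interval $(x_i, x_{i+1}-\epsilon)$ of $\Omega_i$ it is $f_i(x) g_i(x+\epsilon)$, while on the $\epsilon$-thin transition strip $(x_{i+1}-\epsilon, x_{i+1})$ it is $f_i(x) g_{i+1}(x+\epsilon)$.

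Passing to the limit $\epsilon \downarrow 0$ in $\D'$ relies on two standard continuity facts: continuity of translation on $\D'$ (so $\delta^{(j)}(x-(x_i-\epsilon)) \to \delta^{(j)}(x-x_i)$) and continuity of the product of a distribution of finite order by a $\C^n$ function that converges in $\C^n$ on compacts. These send the second and third contributions respectively to $\sum_{i,j} a_{ij}\, g_i(x)\, \delta^{(j)}(x-x_i)$ and $\sum_{i,j} b_{ij}\, f_{i-1}(x)\, \delta^{(j)}(x-x_i)$, while the bulk of the fourth contribution converges to $\sum_i f_i g_i \chi_{\Omega_i}$ and the transition strips (length $\epsilon$, uniformly bounded integrands) contribute $o(1)$ in $\D'$. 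Summing these surviving pieces reproduces the claimed formula; since its form matches (2.1) with complex coefficients and with $\C^n(\RE)$ pieces $f_i g_i$, Theorem 2.2 guarantees $F*G \in \A^n$. The main obstacle is precisely the joint handling of the $\epsilon$-thin transition strip and of the shifts inside the Dirac derivatives: both reduce, upon testing against a fixed $t \in \D$, to a uniform bound on a compact set combined with dominated convergence, but the bookkeeping of indices must be done carefully to confirm that no cross contribution between different delta and regular parts is missed.
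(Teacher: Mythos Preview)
The paper does not actually prove Theorem 2.5: Section 2 is explicitly a review of results from \cite{Dias1}, and the reader is referred there for all proofs. So there is no in-paper argument to compare against.

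That said, your proposal is the natural direct verification from Definition 2.4 and it is correct. The bilinear expansion into the four pieces $F_s G_s$, $F_s G_r$, $F_r G_s$, $F_r G_r$ is justified because, for small $\epsilon>0$, all four pairs have $n$-singular supports contained in the disjoint finite sets $I_F$ and $I_G-\epsilon$, so a single common cover reduces each extended H\"ormander product to the bilinear $\A^n\times\C^n$ pairing of Definition 2.3. Your identification of the regular factor seen by each $\delta^{(j)}(x-x_i)$---$g_i$ from the right for the $a_{ij}$ term, $f_{i-1}$ from the left for the $b_{ij}$ term---is precisely what encodes the noncommutativity of $*$ and matches the stated formula. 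The limits (translation continuity for the shifted deltas, uniform convergence of $g_i(\cdot+\epsilon)$ on compacts, and the $O(\epsilon)$ bound on the transition strips) are routine once paired with a fixed $t\in\D$.

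One detail worth making explicit in a full write-up: although the decomposition (2.2) is indexed by $I=I_F\cup I_G$, one has $n$-sing supp $F_r\subseteq I_F$ and $n$-sing supp $G_r\subseteq I_G$, since $F$ (resp.\ $G$) is $\C^n$ at points of $I\setminus I_F$ (resp.\ $I\setminus I_G$); this is what guarantees that the disjointness hypothesis of Definition 2.3 holds for each of the four summands, not just for the full product $F\cdot G(\cdot+\epsilon)$.
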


The main properties of $*$ are summarized in the following

\begin{theorem}
The product $*$ is an inner operation in $\A^{n}$, it is
associative, distributive, non-commutative and it reproduces the
extended H\"ormander product of distributions if the $n$-singular
supports of $F$ and $G$ do not intersect. Since $\A^{n}$ is not
closed with respect to the distributional derivative $D_x$, the
Leibnitz rule is valid only under the condition that $D_x F, D_xG
\in \A^{n}$.
\end{theorem}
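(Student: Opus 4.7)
The plan is to reduce all five properties to direct verifications using the explicit formula of Theorem 2.5 together with the canonical decomposition of Theorem 2.2. Since that formula expresses $F*G$ as a finite combination of delta-derivatives with $\C^n$-coefficients plus a piecewise $\C^n$-function, the three easier properties follow almost immediately. For the \emph{inner operation} property, the Leibnitz-type expansion $h(x)\delta^{(j)}(x-x_i)=\sum_{k=0}^{j}\binom{j}{k}(-1)^k h^{(k)}(x_i)\delta^{(j-k)}(x-x_i)$ reduces each singular term to a combination of deltas of order at most $j+1\le n+1$ supported on $I$, so the singular part of $F*G$ lies in $\F^n$ and the regular part in $\C_p^n$, giving $F*G\in\A^n$. \emph{Distributivity} is immediate from the linearity of the formula in the data $(a_{ij},f_i)$ and $(b_{ij},g_i)$. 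For \emph{non-commutativity}, it suffices to compute $\delta * H=H(0^+)\delta=\delta$ and $H*\delta=H(0^-)\delta=0$, where $H$ is the Heaviside function; the asymmetry is built into the right-shift definition.

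Next I would verify that $*$ coincides with the extended H\"ormander product when $I_F\cap I_G=\emptyset$. At each $x_i\in I=I_F\cup I_G$ exactly one of the two delta-coefficient families vanishes, and whenever $x_i\notin I_G$ the function $g_i$ agrees with $g_{i-1}$ on a neighbourhood of $x_i$ (and symmetrically for $f$). The surviving singular term $a_{ij}g_i(x)\delta^{(j)}(x-x_i)$ therefore coincides locally with the ordinary product of a delta-derivative by a $\C^n$-function on a neighbourhood where $G$ is smooth, and gluing over the finite open cover reproduces the extended H\"ormander product of Definition 2.3. For \emph{associativity}, the cleanest route is to observe that under the right-shift definition both $(F*G)*H$ and $F*(G*H)$ are the distributional limit of $F(x)\cdot G(x+\epsilon)\cdot H(x+2\epsilon)$; combinatorially, the coefficient of $\delta^{(j)}(x-x_i)$ in both iterated products acquires the form $a_{ij}g_i h_i+b_{ij}f_{i-1}h_i+c_{ij}f_{i-1}g_{i-1}$, each delta-factor seeing the lateral values of the other factors on the side dictated by the shift. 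Comparing the two expansions coefficient by coefficient gives the identity.

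The main obstacle is the Leibnitz rule, because $D_x$ may raise the order of a distribution and push an element of $\A^n$ out of $\A^n$; this is precisely why the hypothesis $D_xF,D_xG\in\A^n$ is imposed. To prove $D_x(F*G)=(D_xF)*G+F*(D_xG)$ I would differentiate the formula of Theorem 2.5 term by term. The regular part $\sum f_i g_i\chi_{\Omega_i}$ differentiates to $\sum (f_i g_i)'\chi_{\Omega_i}$ plus jump contributions $\bigl((f_i g_i)(x_i^+)-(f_{i-1}g_{i-1})(x_i^-)\bigr)\delta(x-x_i)$, while the singular part produces $\delta^{(j+1)}$-terms together with derivatives of the $\C^n$-coefficients. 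On the other side, $(D_xF)*G+F*(D_xG)$ generates matching contributions: the jumps of $F,G$ at $x_i$ reappear as new Dirac masses inside $D_xF,D_xG$ and then pair, through the very formula being differentiated, with the lateral values $g_i,f_{i-1}$ to yield exactly the jump terms above. The hypothesis $D_xF,D_xG\in\A^n$ is exactly what forces the top-order coefficients $a_{in},b_{in}$ to vanish, killing the would-be $\delta^{(n+1)}$-terms that would otherwise escape $\A^n$ and obstruct the identity; with those terms absent, the two sides match coefficient by coefficient and the rule follows.
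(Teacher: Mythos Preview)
The paper does not actually prove this theorem. Section~2 is a review section and states explicitly that ``For details and proofs the reader should refer to \cite{Dias1}''; Theorem~2.6 is quoted there without argument. So there is no proof in the paper to compare against, and your proposal stands on its own.

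On its merits your sketch is sound and follows the natural route suggested by Theorem~2.5. The verifications of closure in $\A^n$, distributivity, non-commutativity (your $\delta*H$ versus $H*\delta$ example is exactly the right one), and agreement with the extended H\"ormander product when the $n$-singular supports are disjoint are all correct as written. For associativity, your combinatorial claim is right and can be made precise most cleanly by observing that the formula in Theorem~2.5 can be read as $F*G=\Delta_F\cdot g_{+}+f_{-}\cdot\Delta_G+\sum f_ig_i\chi_{\Omega_i}$, where $g_+$ and $f_-$ denote the right- and left-side $\C^n$ extensions at each $x_i$; iterating this and using ordinary associativity of the product of a distribution by $\C^n$-functions yields $\Delta_F\cdot g_+h_+ + f_-\cdot\Delta_G\cdot h_+ + f_-g_-\cdot\Delta_H + \sum f_ig_ih_i\chi_{\Omega_i}$ for both bracketings. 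Your alternative limit heuristic $F(x)\cdot G(x+\epsilon)\cdot H(x+2\epsilon)$ is suggestive but would need an interchange-of-limits justification, so the coefficient-matching argument is the one to rely on.

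For the Leibnitz rule your outline is correct, but one clarification: the hypothesis $D_xF,D_xG\in\A^n$ does force $a_{in}=b_{in}=0$, as you say, yet it \emph{also} forces each $f_i,g_i$ to be $\C^{n+1}$ (so that the regular parts $f_i',g_i'$ lie in $\C_p^n$); both conditions are needed for the right-hand side to be defined. Also, the hypothesis is a \emph{well-definedness} condition for $(D_xF)*G$ and $F*(D_xG)$ rather than something that rescues a failing identity: once all terms are defined, your term-by-term comparison (jump terms from the regular part matching the new Dirac masses in $D_xF,D_xG$ paired with lateral values) goes through identically, since $(f_i-f_{i-1})g_i+(g_i-g_{i-1})f_{i-1}=f_ig_i-f_{i-1}g_{i-1}$ at each $x_i$.
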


\end{section}

\begin{section}{The Shifting Delta operators}

Using the spaces $\A^n$ and the product $*$, we can now define the
following sets of operators:

\begin{definition} \label{DefOp}
(1) Let $n \in \N_0$. The set of all {\it boundary potential
operators} of the form (\ref{B0}):
$$
\wh B: \, \DO(\wh B) \subset \D' \longrightarrow \D'; \quad \wh B
\psi = \psi * B_1 + B_2 * \psi
$$
where $B_1,B_2 \in \A^n$ and $B_1,B_2$ have finite support, is
denoted by $\wh \A^n$. We notice that $\cup_{i \ge n} \A^i
\subseteq \DO(\wh B)$. Moreover, $\wh\A^n
\subset \wh\A^m$ for all $n < m$.\\
\\
(2) The set of all {\it boundary pseudo potential operators}
(\ref{BPP0}) of the form:
\begin{equation}\label{BPP1}
\wh B: \, \DO(\wh B) \subset \D' \longrightarrow \D'; \quad \wh B
\psi = \wh B_1 \psi + \wh B_2 D_x \psi + D_x \wh B_3 D_x \psi
\end{equation}
where $\wh B_1 \in \wh\A^1$ and $\wh B_2,\wh B_3 \in \wh A^0$, is
denoted by $\wh\P$. We notice that $\cup_{i\ge 1} \A^i \subseteq
\DO(\wh B)$.\\
\\
(3) The linear operators $\wh B: \, \DO(\wh B) \subset \D'
\rightarrow \D'$ that satisfy (\ref{BOP}) are called {\it boundary
operators}. The set of all {\it boundary operators} is denoted by
$\wh\B$.

\end{definition}

We have, of course, $\wh\A^0 \subset \wh\A^1 \subset \wh\P \subset
\wh\B$. In this section, we will study two important families of
boundary potential operators:

\begin{definition}\label{Defdelta+}
For $n \in \N_0$ and $x_0 \in \RE$ let:
$$
\widehat\delta^{(n)}_+(x_0): \A^{n} \longrightarrow \A^{n}; \quad F \longrightarrow
\widehat\delta^{(n)}_+(x_0)F= \delta^{(n)}(x-x_0)*F
$$
$$
\widehat\delta^{(n)}_-(x_0): \A^{n} \longrightarrow \A^{n}; \quad F \longrightarrow
\widehat\delta^{(n)}_-(x_0)F= F * \delta^{(n)}(x-x_0) \, .
$$
We call $\widehat\delta^{(n)}_+(x_0)$ the $n${\it th-order right shifting delta}, and
$\widehat\delta^{(n)}_-(x_0)$ the $n${\it th-order left shifting delta}. For $n=0$, we simply write
$\widehat\delta_+(x_0)$ and $\widehat\delta_-(x_0)$; for $x_0=0$ we write $\widehat\delta^{(n)}_+$ and
$\widehat\delta^{(n)}_-$.
\end{definition}

We have $\wh\delta^{(n)}_\pm(x_0) \in \wh\A^n$. We also notice
that, since $\delta^{(n)}(x-x_0) \in \A^{m}$ for all $m \ge n$,
the operators $\widehat\delta^{(n)}_\pm(x_0)$ can be extended to
$\A^{m}$, for all $m \ge n$.

These operators satisfy the following basic properties:
\newline

(1) Let us define
$$
\widehat\delta^{(n)}(x_0): \C^n \longrightarrow \D'; \quad f
\longrightarrow \widehat\delta^{(n)}(x_0) f=
\delta^{(n)}(x-x_0)\cdot f
$$
where $\cdot$ is the extended H\"ormander product. The operator
$\widehat\delta^{(n)}(x_0)$ is just the standard operator of
"multiplication by the $n$-order derivative of a Dirac delta". We
have explicitly
\begin{equation}
\langle \delta^{(n)}(x-x_0)\cdot f,t \rangle = (-1)^n
\frac{d^n}{dx^n}(f \, t)(x_0) \quad , \quad \forall t \in \D \, .
\label{StandProd}
\end{equation}
Since, for all $f \in \C^n$ (cf. Theorem 2.5),
$$
\delta^{(n)}(x-x_0)*f=f*\delta^{(n)}(x-x_0)=\delta^{(n)}(x-x_0)\cdot f
$$
we have
$$
\widehat\delta^{(n)}_-(x_0) f=\widehat\delta^{(n)}_+(x_0) f =
\widehat\delta^{(n)}(x_0) f \quad , \quad \forall f \in \C^n
$$
and so the operators $\widehat\delta^{(n)}_+(x_0)$ and $\widehat\delta^{(n)}_-(x_0)$ are extensions of
$\widehat\delta^{(n)}(x_0)$ to the space $\A^n$.
\newline

(2) For all $n \in \N_0$, both $\widehat\delta^{(n)}_{-}(x_0)$ and $\widehat\delta^{(n)}_+(x_0)$ can be cast as
weak operator limits of one-parameter families of (operators of multiplication by) smooth functions. Let us set
$x_0=0$. For every $\epsilon
>0$, let $v_{\epsilon} \in \D$ be a non-negative and even function with support on $[-\epsilon,\epsilon]$ and such
that $\int_{-\infty}^{+\infty}  v_{\epsilon}(x)\, dx=1$. Since
$$
\lim_{\epsilon \downarrow 0} \langle v_{\epsilon},t \rangle =t(0)
\quad , \quad \forall t \in \D
$$
we have, in the sense of distributions, $\lim_{\epsilon \downarrow 0} v_{\epsilon}(x) = \delta(x)$.

Now consider the operators ($n \in \N_0$, $\epsilon >0$):
\begin{equation}
\widehat v_{\epsilon}^{(n)} : \A^{n} \longrightarrow \A^{n}; \quad F(x) \longrightarrow \widehat
v_{\epsilon}^{(n)} F(x)= v_{\epsilon}^{(n)}(x-\epsilon) \cdot F(x) \label{eqB}
\end{equation}
where $\cdot$ is the extended H\"ormander product. In the
distributional sense, we have once again
$$
\lim_{\epsilon \downarrow 0} v_{\epsilon}^{(n)}(x-\epsilon) = \delta^{(n)}(x)
$$
for all $n \in \N_0$. On the other hand, in the operator sense, we get:

\begin{theorem}
For all $n \in \N_0$, the one parameter family $\left(\widehat v_{\epsilon}^{(n)}\right)_{(\epsilon >0)}$
converges, in the weak operator sense, to the operator $\widehat\delta_+^{(n)}$, i.e.
$$
w-\lim_{\epsilon \downarrow 0} \widehat v_{\epsilon}^{(n)} = \widehat\delta_+^{(n)}.
$$
\end{theorem}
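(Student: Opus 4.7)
\emph{Sketch of approach.} Fix $F\in\A^n$ and $t\in\D$, and let $g\in\C^n(\RE)$ be the regular representative of $F$ on the open interval immediately to the right of $0$ provided by Theorem 2.2; note that $g(0),g'(0),\dots,g^{(n)}(0)$ are precisely the right-lateral limits at $0$ of $F,F',\dots,F^{(n)}$. By Theorem 2.5,
\[
\widehat\delta_+^{(n)}F \;=\; \delta^{(n)}(x)*F \;=\; g(x)\cdot\delta^{(n)}(x),
\]
and by (\ref{StandProd}) this equals $(-1)^n(gt)^{(n)}(0)$ when paired with $t$. So the task reduces to showing that $\langle\widehat v_\epsilon^{(n)}F,t\rangle\to(-1)^n(gt)^{(n)}(0)$ as $\epsilon\downarrow 0$.

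I would set $\phi_\epsilon(x):=v_\epsilon^{(n)}(x-\epsilon)$ and choose $\epsilon_0>0$ so small that the only point of the $n$-singular support of $F$ lying in $[0,2\epsilon_0]$ is (at most) $0$ itself. The decisive observation is that $v_\epsilon\in\D$ vanishes outside $[-\epsilon,\epsilon]$, so every derivative of $v_\epsilon$ vanishes at $\pm\epsilon$; consequently $\phi_\epsilon^{(m)}(0)=v_\epsilon^{(n+m)}(-\epsilon)=0$ for every $m\ge 0$. Decomposing $F$ via Theorem 2.2 and computing $\langle\widehat v_\epsilon^{(n)}F,t\rangle=\langle F,\phi_\epsilon t\rangle$ for $\epsilon<\epsilon_0$: singular terms of $F$ at points of $I_F\setminus\{0\}$ contribute nothing because $\phi_\epsilon t$ vanishes on a neighbourhood of each such point; a singular term $c_j\delta^{(j)}$ at $0$ contributes $(-1)^j(\phi_\epsilon t)^{(j)}(0)$, which is zero by the Leibniz expansion (each term contains some factor $\phi_\epsilon^{(k)}(0)=0$).

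Thus the pairing reduces to the ordinary integral $\int_0^{2\epsilon}g(x)\,v_\epsilon^{(n)}(x-\epsilon)\,t(x)\,dx$. Substituting $y=x-\epsilon$ and integrating by parts $n$ times (all boundary terms vanish because every derivative of $v_\epsilon$ is zero at $\pm\epsilon$) gives
\[
(-1)^n\int_{-\epsilon}^{\epsilon}v_\epsilon(y)\,(gt)^{(n)}(y+\epsilon)\,dy.
\]
Since $(gt)^{(n)}$ is continuous on $\RE$ and $v_\epsilon$ is a nonnegative approximation to the Dirac delta of total mass one supported in $[-\epsilon,\epsilon]$, this tends to $(-1)^n(gt)^{(n)}(0)$ as $\epsilon\downarrow 0$ (the shift $y\mapsto y+\epsilon$ is absorbed by uniform continuity), which matches $\langle\widehat\delta_+^{(n)}F,t\rangle$.

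The main obstacle, conceptually, is verifying that the singular mass of $F$ at $0$ is \emph{exactly} annihilated by $\phi_\epsilon$. This rests on the specific translation by $+\epsilon$ in the definition of $\widehat v_\epsilon^{(n)}$, which places the left endpoint of $\mathrm{supp}\,\phi_\epsilon$ at $0$ and forces $\phi_\epsilon$ and all its derivatives to vanish there. The same asymmetry explains the subscript $+$: replacing the translation $+\epsilon$ by $-\epsilon$ would put the right endpoint of the support at $0$, replace the right regular representative $g$ by the left one, and produce the left-shifting operator $\widehat\delta_-^{(n)}$ in the limit.
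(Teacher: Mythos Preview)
Your proof is correct and follows essentially the same route as the paper: reduce the pairing $\langle \widehat v_\epsilon^{(n)}F,t\rangle$ to the integral $\int_0^{2\epsilon} v_\epsilon^{(n)}(x-\epsilon)\,g(x)\,t(x)\,dx$ using the right regular representative $g$ of $F$, integrate by parts $n$ times, and invoke the delta-sequence property of $v_\epsilon$. The only difference is that you make explicit (via the vanishing of all derivatives of $\phi_\epsilon$ at $0$) the step the paper leaves implicit, namely that the singular part of $F$ at $0$ contributes nothing to the pairing.
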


\begin{proof}
Let us start by considering the case $n=0$.  Let $F \in \A^{0}$
and $t\in \D$. We have
\begin{align}
\lim_{\epsilon \downarrow 0} \langle\widehat v_{\epsilon} F,t\rangle &=\lim_{\epsilon \downarrow 0} \langle v_{\epsilon}(x-\epsilon)\cdot F(x),t(x)\rangle \nonumber \\
&=\lim_{\epsilon \downarrow 0} \langle F(x), v_{\epsilon}(x-\epsilon) t(x) \rangle   \label{eq3.1} \\
&=\lim_{\epsilon \downarrow 0} \int_{0}^{2\epsilon} v_{\epsilon}(x-\epsilon)f(x)t(x) \, dx \nonumber
\end{align}
where we used the fact that $v_{\epsilon}(x-\epsilon)=0$ for all $
x \notin (0,2\epsilon)$ and also that, for sufficiently small
$\epsilon > 0$, there is a function $f\in \C^0$ such that $\left.
F\right|_{(0,2\epsilon)}=\left.f\right|_{(0,2\epsilon)}$ (cf.
Theorem 2.2).

Setting $g=ft \in \C^0$, we get
\begin{align*}
& \lim_{\epsilon \downarrow 0} \int_{0}^{2\epsilon} v_{\epsilon}(x-\epsilon)g(x) \, dx \\
= &\lim_{\epsilon \downarrow 0} \left[\int_{0}^{2\epsilon} v_{\epsilon}(x-\epsilon)g(0) \, dx
+\int_{0}^{2\epsilon} v_{\epsilon}(x-\epsilon)\left[g(x)-g(0)\right] \, dx \right] \\
=& g(0) + \lim_{\epsilon \downarrow 0} \int_{0}^{2\epsilon} v_{\epsilon}(x-\epsilon)\left[g(x)-g(0)\right] \, dx \, .
\end{align*}
Now, for every $\epsilon > 0$, let
$$
M_\epsilon=\underset{x \in [0,2\epsilon]}{\text{max}}  \, \left[g(x)-g(0)\right] \quad \text{and} \quad
m_\epsilon=\underset{x \in [0,2\epsilon]}{\text{min}}  \, \left[g(x)-g(0)\right] \, .
$$
Since $v_\epsilon(x-\epsilon)$ is non-negative, we have for all $\epsilon > 0$
$$
m_\epsilon \le \int_{0}^{2\epsilon} v_{\epsilon}(x-\epsilon)\left[g(x)-g(0)\right] \, dx \le M_\epsilon
$$
and since $\lim_{\epsilon \downarrow 0}M_\epsilon = \lim_{\epsilon \downarrow 0}m_\epsilon =0$, we get
$$
\lim_{\epsilon \downarrow 0} \int_{0}^{2\epsilon} v_{\epsilon}(x-\epsilon)\left[g(x)-g(0)\right] \, dx =0 \, .
$$
Hence
$$
\lim_{\epsilon \downarrow 0} \langle\widehat v_{\epsilon} F,t\rangle = g(0)=f(0)t(0) \, .
$$

The generalization of this result to $n \in \N$ is easily
obtained. Notice first that the equation (\ref{eq3.1}) is also
valid if we replace $\widehat v_{\epsilon}$ by $\widehat
v_{\epsilon}^{(n)}$ and require that $F\in \A^n$ (in which case
$f,g \in \C^n$). Integrating by parts, it follows that
$$
\lim_{\epsilon \downarrow 0} \int_{0}^{2\epsilon} v_{\epsilon}^{(n)}(x-\epsilon)g(x) \, dx
= (-1)^n \lim_{\epsilon \downarrow 0} \int_{0}^{2\epsilon} v_{\epsilon}(x-\epsilon)g^{(n)}(x) \, dx
=(-1)^n g^{(n)}(0) \, .
$$

Finally, from Definition \ref{Defdelta+} and Theorem 2.5, we also
have:
$$
\widehat\delta_+^{(n)}  F=\delta^{(n)}(x) \cdot f(x)
\Longrightarrow \langle \widehat\delta_+^{(n)} F,t\rangle = (-1)^n
\frac{d^n}{d x^n}(ft)(0) , \quad \forall t \in \D
$$
and so $w-\lim_{\epsilon \downarrow 0} \widehat v_{\epsilon}^{(n)} = \widehat\delta_+^{(n)} $, for all $n \in \N_0$.

\end{proof}

If we re-define
$$
\widehat v_{\epsilon}^{(n)} : \A^{n} \longrightarrow \A^{n}; \quad F(x) \longrightarrow \widehat
v_{\epsilon}^{(n)} F= v_{\epsilon}^{(n)}(x+\epsilon) \cdot F(x)
$$
then we also have
$$
w-\lim_{\epsilon \downarrow 0} \widehat v_{\epsilon}^{(n)} = \widehat\delta_-^{(n)}
$$
for all $n \in \N_0$.

Moreover, if we combine $v_{\epsilon}^{(n)}(x+\epsilon)$ and
$v_{\epsilon}^{(n)}(x-\epsilon)$, we easily obtain one-parameter
families of smooth functions that converge in $\D'$ to
$\delta^{(n)}(x)$
$$
c\, v^{(n)}_\epsilon(x-\epsilon) + (1-c)
v^{(n)}_\epsilon(x+\epsilon) \overset{\D'}{\longrightarrow}
\delta^{(n)}(x) \quad , \quad c \in \RE
$$
and which in the weak operator sense converge to
$$
w-\lim_{\epsilon \downarrow 0} \left[c
v^{(n)}_\epsilon(x-\epsilon)\cdot + (1-c)
v^{(n)}_\epsilon(x+\epsilon)\cdot\right]  =  c
\widehat\delta_+^{(n)} +(1-c) \widehat\delta_-^{(n)} \, .
$$

Finally, we notice that every boundary potential operator
$\widehat B \in \wh\A^n$ is given by a linear combination of the
operators $\widehat\delta_+^{(m)}$ and $\widehat\delta_-^{(m)}$,
$m \le n$. Hence, every $\widehat B \in \wh\A^n$ can also be
written as the weak operator limit of sequences of operators of
multiplication by smooth functions. The associated sequence of
smooth functions converges in $\D'$ to $B=B_1+B_2 \leftrightarrow
\widehat B$.
\newline

(3) Since the Sobolev-Hilbert spaces $\H^2(\RE_{\pm})$ satisfy
$$
\H^2(\RE_{\pm}) = \chi_{\RE_{\pm}} \left[\H^2\right]\quad
\mbox{and} \quad \H^2 \subset \C^1,
$$
we have $\H^2(\RE_-) \oplus \H^2(\RE_+) \subset \C_p^1 \subset
\A^{1}$ and $D_x[ \H^2(\RE_-) \oplus \H^2(\RE_+)] \subset \A^{0}$.
Hence, the three operators $\widehat\delta_{\pm}(x)$,
$\widehat\delta'_{\pm}(x)$ and $\widehat\delta_{\pm}(x)D_x$ are
well defined on $\H^2(\RE_-) \oplus \H^2(\RE_+)$. The same is then
true for all $\wh B \in \wh\P$.

A general element of $\H^2(\RE_-) \oplus \H^2(\RE_+)$ can be written as:
$$
\psi= \chi_{\RE_-} \psi_- + \chi_{\RE_+} \psi_+
$$
where $\psi_{\pm} \in \H^2$. Then we have explicitly
\begin{align}
\widehat\delta_{\pm} \psi(x) & = \delta(x) \cdot \psi_{\pm}(x)=\delta(x) \psi_{\pm}(0) \nonumber \\
 \widehat\delta'_{\pm}
\psi(x) & = \delta'(x) \cdot \psi_{\pm}(x)=\delta'(x) \psi_{\pm}(0)-\delta(x) \psi'_{\pm}(0)  \\
\widehat\delta_{\pm} D_x \psi(x) & = \delta(x) \cdot \psi'_{\pm}(x)=\delta(x) \psi'_{\pm}(0)\, . \nonumber
\end{align}

\end{section}

\begin{section}{One-dimensional boundary operators: General results}

Let $\widehat H_0=-D^2_x$ be the one-dimensional free
Schr\"odinger operator with domain on the Sobolev-Hilbert space
$\H^2$. As before, let $\widehat S$ be its symmetric restriction
to the domain $\D(\RE \backslash \{0\})$ and let $\widehat S^*$ be
the adjoint of $\widehat S$. In this section we prove some general
results about one-dimensional Schr\"odinger operators with
arbitrary boundary operators:
\begin{equation}\label{ZB}
\widehat Z:\, \DO(\widehat H) \subseteq \L^2 \longrightarrow \L^2;
\, \, \widehat H=\widehat H_0+\widehat B
\end{equation}
where $\widehat B \in \wh\B$. More precisely, we show that every
$\widehat Z$ of the previous form satisfies $\widehat Z \subseteq
\widehat S^*$, and we determine the maximal domain of $\widehat Z$
in terms of the operator $\widehat B$, explicitly. Moreover, we
show that $\widehat S^*$ can be written in the previous form
(\ref{ZB}), and determine the associated boundary operator
$\widehat B$, explicitly.

Here, and from now on, $\widehat H_0$ is defined in the generalized sense
$$
\widehat H_0:\L^2 \longrightarrow \D'; \quad \widehat H_0=-D^2_x \, .
$$
Moreover, the maximal domain of an arbitrary operator $\widehat Z$ is defined to be
$$
\DO_\max(\widehat Z)\equiv \{\psi \in \L^2: \widehat Z \psi \in \L^2 \}
$$
where the condition $\widehat Z \psi \in \L^2$ means precisely that: i) $\widehat Z \psi$ is well-defined as a
distribution in $\D'$ and ii) there exists $\phi \in \L^2$ such that $\widehat Z \psi=\phi$, weakly.

We start by proving the following general result:

\begin{lemma}\label{LemmaZ}
Let $\widehat Z= \widehat H_0+\widehat B$ where $\widehat B \in
\wh\B$ is an arbitrary boundary operator. Then the maximal domain
of $\widehat Z$ satisfies
$$
\DO_\max(\widehat Z) \subseteq \H^2(\RE_-) \oplus \H^2(\RE_+)\, .
$$
\end{lemma}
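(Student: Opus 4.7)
The plan is to exploit the key property that $\wh B\psi$ is supported on a finite set $I$, so outside of $I$ the equation $\wh Z\psi=\phi$ collapses to the free equation $-\psi''=\phi$ in the distributional sense. Elliptic regularity on each connected component of $\RE\setminus I$ then yields $\H^2$-regularity there; in the case $I\subseteq\{0\}$ relevant to the stated conclusion this produces precisely $\H^2(\RE_-)\oplus\H^2(\RE_+)$, and in the general case one analogously obtains the direct sum of $\H^2$'s over the finitely many components of $\RE\setminus I$.

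First I would fix $\psi\in\DO_\max(\wh Z)$, so $\psi\in\L^2$ and $\phi:=\wh Z\psi=-\psi''+\wh B\psi\in\L^2$. Since $\mbox{supp}\,\wh B\psi\subseteq I$ by the defining property (\ref{BOP}) of a boundary operator, testing against any $t\in\D(\RE\setminus I)$ gives $\langle-\psi'',t\rangle=\langle\phi,t\rangle$, hence $\psi''=-\phi|_{\RE\setminus I}\in\L^2(\RE\setminus I)$ as a distribution on the open set $\RE\setminus I$. This set decomposes into finitely many open intervals, so it suffices to work component by component. On a bounded component $(a,b)$, the standard one-dimensional fact that $u,u''\in\L^2(a,b)$ forces $u\in\H^2(a,b)$ (two distributional integrations of $u''$ represent $u$ as a $\C^1$ function plus an affine polynomial, all with $\L^2$ derivative on the bounded interval) yields $\psi|_{(a,b)}\in\H^2(a,b)$ directly.

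The main obstacle is the unbounded tails, where the naive integration-by-parts recovery of $\psi'$ leaves uncontrolled boundary contributions at infinity. For a half-line component $\Omega=(a,\infty)$, I would localize: pick a smooth cut-off $\chi$ with $\chi=0$ near $a$ and $\chi=1$ on $[a+2,\infty)$. Then $\chi\psi$, extended by $0$ to all of $\RE$, lies in $\L^2(\RE)$, and the Leibnitz expansion $(\chi\psi)''=\chi''\psi+2\chi'\psi'+\chi\psi''$ is again in $\L^2(\RE)$: the third term is global, while the first two have compact support in $(a,a+2)$ where $\psi$ is already $\H^2$ by the bounded-interval step. With $\chi\psi$ and $(\chi\psi)''$ both in $\L^2(\RE)$, a Fourier-side interpolation based on $2|\xi|\le 1+\xi^2$ forces $(\chi\psi)'\in\L^2(\RE)$, so $\chi\psi\in\H^2(\RE)$ and in particular $\psi\in\H^2(a+2,\infty)$. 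Gluing with the bounded piece $(a,a+2)$ gives $\psi|_\Omega\in\H^2(\Omega)$; the left half-line case is symmetric, and specializing to $I\subseteq\{0\}$ one reads off $\psi\in\H^2(\RE_-)\oplus\H^2(\RE_+)$ as claimed.
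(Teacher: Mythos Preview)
Your proof is correct and follows the same overall strategy as the paper: test against $t\in\D(\RE\setminus I)$ to collapse $\wh Z\psi=\phi$ to the free equation $-\psi''=\phi$ on $\RE\setminus I$, and then invoke one-dimensional $\H^2$-regularity on each component. The only real difference is in how that regularity step is justified: the paper dispatches it in one line by citing Grubb [Theorem~4.20, Remark~4.21], which gives $\psi|_{\RE_\pm}\in\H^2(\RE_\pm)$ directly from $\psi,\psi''\in\L^2(\RE_\pm)$, whereas you supply a self-contained argument via the bounded-interval case, a smooth cutoff, and the Fourier-side inequality $2|\xi|\le 1+\xi^2$. Your route is more elementary and makes the lemma independent of an external reference, at the cost of some extra length; the paper's version is shorter but relies on a black-box citation.
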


\begin{proof}
We have $\psi \in \DO_\max(\widehat Z)$ iff $\psi \in \L^2 \cap \DO(\widehat B)$ and there exists $\phi \in
\L^2$ such that $\widehat Z\psi=\phi$ in the weak sense. This implies that
$$
\langle \widehat Z\psi,t\rangle=\langle \phi,t\rangle \, , \, \forall t \in \D(\RE\backslash \{0\}) \quad
\Longleftrightarrow \quad \left\{
\begin{array}{l}
\left. \left(-D_x^2 \psi \right) \right|_{\RE_-}=\left. \phi \right|_{\RE_-} \\
\left. \left(-D_x^2 \psi \right) \right|_{\RE_+}=\left. \phi \right|_{\RE_+}
\end{array} \right.
$$
where the identities are in the distributional sense and we used the fact that supp $\widehat B \psi\subseteq
\{0\}$. It follows from $\left.(-D_x^2 \psi) \right|_{\RE_\pm}= -D_x^2 \left(\left.\psi
\right|_{\RE_\pm}\right)$ and Grubb's [Theorem 4.20, Remark 4.21 \cite{Grubb}] that $\left. \psi \right|_{\RE_-}
\in \H^2(\RE_-)$, and likewise $\left. \psi \right|_{\RE_+} \in \H^2(\RE_+)$. Since $\psi \in \L^2=\L^2(\RE_-)
\oplus \L^2(\RE_+)$, we conclude that $\psi \in \H^2(\RE_-) \oplus \H^2(\RE_+)$ and so:
$$
\DO_\max(\widehat Z) \subseteq \H^2(\RE_-) \oplus \H^2(\RE_+) \, .
$$
\end{proof}

A natural question is then whether $\widehat Z \subseteq \widehat
S^*$. First we show that $\widehat S^*$ is itself of the form
(\ref{ZB}). Recall that the domain of $\widehat S^*$ is
\cite{Albeverio1,Albeverio2,Albeverio4,Albeverio6}:
$$
\DO(\widehat S^*)= \H^2(\RE_-) \oplus \H^2(\RE_+) \subset \L^2
$$
and that all $\psi \in \H^2(\RE_-) \oplus \H^2(\RE_+)$ can be written in the form:
$$
\psi= \chi_{\RE_-} \psi_- + \chi_{\RE_+} \psi_+ \, ,
$$
where $\psi_-,\psi_+ \in \H^2$. Moreover, $\widehat S^*$ acts as:
$$
\widehat S^* \left[\chi_{\RE_-} \psi_- + \chi_{\RE_+} \psi_+ \right]= - \chi_{\RE_-} \psi''_- - \chi_{\RE_+}
\psi''_+
$$
where, as usual, $\psi''=D_x^2\psi$.

Let us define the operators
$$
\widehat\beta^{(n)}: \A^{n} \longrightarrow \A^{n}; \quad F \longrightarrow
\left[\widehat\delta^{(n)}_+-\widehat\delta^{(n)}_- \right]F
$$
and introduce the notation $\widehat\beta=\widehat\beta^{(0)}$ and $\widehat\beta'=\widehat\beta^{(1)}$. Then

\begin{theorem} \label{TheoS}
The adjoint of $\widehat S$ is given by
\begin{equation}\label{AdjointS}
\widehat S^*=\widehat H_0 + 2 \widehat\beta D_x + \widehat\beta'
\end{equation}
and the domain of $\widehat S^*$ coincides with the maximal domain of the expression on the right hand side,
$$
\DO(\widehat S^*)=\{\psi \in \L^2:\, (\widehat H_0 + 2 \widehat\beta D_x + \widehat\beta')\psi \in \L^2 \} \, .
$$
\end{theorem}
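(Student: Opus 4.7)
The plan is to verify the identity pointwise on the already-known domain $\DO(\widehat S^*)=\H^2(\RE_-)\oplus\H^2(\RE_+)$, and then pin down the maximal domain by combining Lemma \ref{LemmaZ} with a direct computation.

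First, I would take an arbitrary $\psi=\chi_{\RE_-}\psi_-+\chi_{\RE_+}\psi_+\in\H^2(\RE_-)\oplus\H^2(\RE_+)$ and compute $\widehat H_0\psi=-D_x^2\psi$ in the distributional sense. Using the standard jump formula for the distributional derivative of a piecewise-$\C^1$ function, one gets
\[
D_x\psi=\chi_{\RE_-}\psi'_-+\chi_{\RE_+}\psi'_++[\psi_+(0)-\psi_-(0)]\,\delta(x),
\]
and differentiating once more,
\[
-D_x^2\psi=-\chi_{\RE_-}\psi''_--\chi_{\RE_+}\psi''_+-[\psi'_+(0)-\psi'_-(0)]\,\delta(x)-[\psi_+(0)-\psi_-(0)]\,\delta'(x).
\]
On the other hand, the explicit formulas (3.5) for $\widehat\delta_\pm$, $\widehat\delta'_\pm$ and $\widehat\delta_\pm D_x$ give
\[
\widehat\beta'\psi=[\psi_+(0)-\psi_-(0)]\,\delta'(x)-[\psi'_+(0)-\psi'_-(0)]\,\delta(x),\qquad 2\widehat\beta D_x\psi=2[\psi'_+(0)-\psi'_-(0)]\,\delta(x).
\]
Adding these to $\widehat H_0\psi$, the $\delta'$-coefficients cancel with the jump term, and the $\delta$-coefficients combine to cancel $[\psi'_+(0)-\psi'_-(0)]$ as well. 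What remains is precisely $-\chi_{\RE_-}\psi''_- - \chi_{\RE_+}\psi''_+$, which is the known action of $\widehat S^*$. This shows $\widehat S^*\psi=(\widehat H_0+2\widehat\beta D_x+\widehat\beta')\psi$ on $\DO(\widehat S^*)$.

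It remains to match the domains. Write $\widehat R=\widehat H_0+2\widehat\beta D_x+\widehat\beta'$. Since $2\widehat\beta D_x+\widehat\beta'$ produces distributions supported in $\{0\}$, it is a boundary operator in $\wh\B$, and Lemma \ref{LemmaZ} yields $\DO_\max(\widehat R)\subseteq\H^2(\RE_-)\oplus\H^2(\RE_+)=\DO(\widehat S^*)$. The converse inclusion is immediate from the computation above, because for every $\psi\in\H^2(\RE_-)\oplus\H^2(\RE_+)$ the distribution $\widehat R\psi$ equals the $\L^2$ function $\widehat S^*\psi$. Therefore $\DO_\max(\widehat R)=\DO(\widehat S^*)$ and the two operators coincide.

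The only delicate point is ensuring the algebraic cancellation is performed with the correct signs; this is just careful bookkeeping, since all ingredients are provided by (3.5) and Theorem 2.5. The main conceptual input is Lemma \ref{LemmaZ}, which supplies the nontrivial inclusion $\DO_\max(\widehat R)\subseteq\H^2(\RE_-)\oplus\H^2(\RE_+)$ via Grubb's regularity result; everything else reduces to the jump-formula computation for $D_x^2$ of a function with a discontinuity at the origin.
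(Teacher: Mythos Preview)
Your proposal is correct and follows essentially the same approach as the paper: both proofs compute $-D_x^2\psi$ on $\H^2(\RE_-)\oplus\H^2(\RE_+)$ via the jump formula, identify the singular terms with $2\widehat\beta D_x\psi+\widehat\beta'\psi$, and then invoke Lemma~\ref{LemmaZ} for the nontrivial domain inclusion while the reverse inclusion follows from the computation itself. The only cosmetic difference is that the paper keeps the singular terms in the form $\delta'(x)\cdot\psi_\pm$ (as products with smooth functions) before recognizing them as $\widehat\delta'_\pm\psi$, whereas you expand everything into coefficients times $\delta,\delta'$ using (3.5); the bookkeeping and logical structure are otherwise identical.
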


\begin{proof}

Consider the action of $D_x^2$ on $\psi \in \DO(\widehat S^*)$:
\begin{align*}
& D^2_x \left[\chi_{\RE_-} \psi_- + \chi_{\RE_+} \psi_+ \right] \\
=& \chi_{\RE_-} \psi''_- + \chi_{\RE_+} \psi''_++ 2 \delta(x) \left[\psi'_+ - \psi'_- \right] + \delta'(x)
\left[\psi_+ - \psi_- \right]
\end{align*}
and let us re-express the r.h.s in terms of the operators $\widehat\delta_{\pm}$ and $\widehat\delta'_{\pm}$,
$$
-D^2_x \psi =\widehat S^* \psi -2 \left[\widehat\delta_+ -\widehat\delta_-  \right] \psi' -
\left[\widehat\delta'_+-\widehat\delta'_- \right]\psi \, .
$$
Using the operators $\widehat\beta$ and $\widehat\beta'$ we immediately obtain (\ref{AdjointS}).

It remains to prove that the maximal domain of the r.h.s. of
(\ref{AdjointS}) is $\H^2(\RE_-) \oplus \H^2(\RE_+)$. Let us set
$\widehat B=2 \widehat\beta D_x + \widehat\beta'$. Since supp
$\widehat B \psi\subseteq \{0\}$ for all $\psi \in \DO(\widehat
B)$, it follows from Lemma \ref{LemmaZ} that $\DO_\max(\widehat
H_0 + \widehat B) \subseteq \H^2(\RE_-) \oplus \H^2(\RE_+)$.

Moreover, if $\psi \in \H^2(\RE_-) \oplus \H^2(\RE_+)$ then $\psi=\chi_{\RE_-} \psi_- + \chi_{\RE_+} \psi_+$ for
some $\psi_-,\psi_+ \in \H^2$ and
$$
(\widehat H_0 + 2 \widehat\beta D_x + \widehat\beta') \psi=-\chi_{\RE_-} \psi_-'' - \chi_{\RE_+} \psi_+'' \in
\L^2 \, .
$$
Hence, $\DO_\max (\widehat H_0 + 2 \widehat\beta D_x + \widehat\beta')=\H^2(\RE_-) \oplus
\H^2(\RE_+)=\DO(\widehat S^*)$.
\end{proof}

We are now able to prove the main result of this section:

\begin{theorem}\label{TheoA}
Let $\widehat B \in \wh\B$ be an arbitrary boundary operator and
let
$$
\widehat Z: \DO_\max(\widehat Z) \subset \L^2\longrightarrow \L^2,\quad \widehat Z= \widehat H_0 +\widehat B \,
.
$$
Then $\widehat Z\subseteq \widehat S^*$, and
$$
\DO_\max(\widehat Z)= \mbox{Ker } \widehat F \, \, \cap \, \,
\left( \H^2(\RE_-) \oplus \H^2(\RE_+)\right)
$$
where
\begin{equation}\label{OperatorF}
\widehat F = -2 \widehat\beta D_x - \widehat\beta' +\widehat B \, .
\end{equation}
\end{theorem}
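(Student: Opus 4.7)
The plan is to combine Lemma \ref{LemmaZ} with the explicit decomposition of $\widehat S^*$ from Theorem \ref{TheoS} so as to rewrite $\widehat Z$ on $\H^2(\RE_-)\oplus \H^2(\RE_+)$ as $\widehat S^*$ plus a remainder supported on a finite set.

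First, I would invoke Lemma \ref{LemmaZ} directly: since $\widehat B \in \wh\B$, any $\psi \in \DO_{\max}(\widehat Z)$ must lie in $\H^2(\RE_-)\oplus \H^2(\RE_+)$. This reduces the problem to analyzing the action of $\widehat Z$ on that space. On $\H^2(\RE_-)\oplus \H^2(\RE_+)$, Theorem \ref{TheoS} furnishes the identity
$$
\widehat H_0\,\psi \;=\; \widehat S^*\psi \;-\; 2\widehat\beta D_x \psi \;-\; \widehat\beta'\psi ,
$$
so that for every $\psi \in \H^2(\RE_-)\oplus \H^2(\RE_+) \cap \DO(\widehat B)$ we have the pointwise-in-$\D'$ identity
$$
\widehat Z \psi \;=\; \widehat H_0 \psi + \widehat B\psi \;=\; \widehat S^*\psi + \widehat F\psi ,
$$
with $\widehat F = -2\widehat\beta D_x - \widehat\beta' + \widehat B$ as defined in (\ref{OperatorF}).

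The next step is the $\L^2$ analysis of this decomposition. By Theorem \ref{TheoS}, $\widehat S^*\psi = -\chi_{\RE_-}\psi_-'' - \chi_{\RE_+}\psi_+''$ is automatically in $\L^2$ for every $\psi \in \H^2(\RE_-)\oplus \H^2(\RE_+)$. Hence $\widehat Z \psi \in \L^2$ if and only if $\widehat F \psi \in \L^2$. Now $\widehat F$ is a sum of boundary operators: the first two terms have support in $\{0\}$ (this is precisely what was used in the proof of Theorem \ref{TheoS}, and it also follows from the formulas in (3.5)), while $\widehat B \in \wh\B$ has support on some finite set $I$. Thus $\widehat F \psi$ is a distribution supported on the finite set $I \cup \{0\}$, and consequently is a finite linear combination of Dirac deltas and their derivatives at those points. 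Such a distribution belongs to $\L^2$ if and only if it is the zero distribution, giving $\widehat F\psi = 0$, i.e.\ $\psi \in \mathrm{Ker}\,\widehat F$.

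Putting the two implications together yields the claimed characterization
$$
\DO_{\max}(\widehat Z) \;=\; \mathrm{Ker}\,\widehat F \;\cap\; \bigl(\H^2(\RE_-)\oplus \H^2(\RE_+)\bigr),
$$
and on this domain $\widehat Z\psi = \widehat S^* \psi$, so $\widehat Z \subseteq \widehat S^*$. The main subtlety I anticipate is the last step, namely justifying that a distribution supported on a finite set cannot be an $\L^2$ function unless it is identically zero; this requires noting that $\widehat F \psi$ is of finite order (since all its constituents are) and then applying the standard structure theorem for distributions with finite support to see that non-trivial such distributions are genuine Dirac-type singularities, incompatible with being $\L^2$.
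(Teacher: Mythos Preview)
Your proposal is correct and follows essentially the same route as the paper's own proof: invoke Lemma~\ref{LemmaZ} to land in $\H^2(\RE_-)\oplus\H^2(\RE_+)$, use Theorem~\ref{TheoS} to rewrite $\widehat Z=\widehat S^*+\widehat F$, and then observe that $\widehat F\psi$ is a finitely supported distribution which can lie in $\L^2$ only if it vanishes. Your treatment is in fact slightly more explicit than the paper's in two places --- you spell out that the support of $\widehat F\psi$ is $I\cup\{0\}$ rather than just $\{0\}$, and you flag the use of the structure theorem for distributions with point support --- but the logical skeleton is identical.
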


\begin{proof}

Since supp $\widehat B \psi \subseteq \{0\}$ for all $\psi \in
\DO(\widehat B)$, we have from Lemma \ref{LemmaZ}
$$
\DO_\max(\widehat Z) \subseteq \H^2(\RE_-) \oplus \H^2(\RE_+).
$$
It follows from Theorem \ref{TheoS} (and the definition of
$\widehat Z$) that on $\DO_\max(\widehat Z)$
$$
\widehat Z= \widehat S^* -2 \widehat\beta D_x - \widehat\beta' +
\widehat B= \widehat S^* + \widehat F
$$
where $\widehat F$ is given by (\ref{OperatorF}). Since supp $\widehat F \psi \subseteq \{0\}$, the term
$\widehat F \psi$ is a linear combination of a Dirac delta and its derivatives and so
$$
\psi \in \DO_\max(\wh Z) \Longrightarrow (\widehat S^* + \widehat
F) \psi \in \L^2 \Longrightarrow \widehat F \psi =0 \, .
$$
Hence, $\DO_\max(\widehat Z) \subseteq $ Ker $\widehat F$.
Conversely
$$
\psi \in \mbox{Ker } \widehat F \, \, \cap \, \, \left(\H^2(\RE_-)
\oplus \H^2(\RE_+) \right) \Longrightarrow \widehat Z \psi =
\widehat S^* \psi \in \L^2 \, .
$$
We conclude that $\DO_\max(\widehat Z)=$ Ker $\widehat F  \, \,
\cap \, \, \left(\H^2(\RE_-) \oplus \H^2(\RE_+) \right)$ and
$\widehat Z \subseteq \widehat S^*$.

\end{proof}

\end{section}

\begin{section}{One-dimensional Schr\"odinger operators with point interactions}

Let now $\widehat Z$ denote an arbitrary restriction of $\widehat
S^*$ to a domain characterized by two local boundary conditions at
$x=0$:
\begin{eqnarray} \label{Z}
& \widehat Z: \, \DO(\widehat Z) \subseteq \L^2 \longrightarrow \L^2; \, \widehat Z \psi = \widehat S^* \psi &
\\
& \DO(\widehat Z)=\left\{\psi \in \H^2(\RE_-) \oplus \H^2(\RE_+) :
\, f_i(\psi_\pm(0),\psi_\pm'(0))=0 \, , i=1,2\right\} & \nonumber
\end{eqnarray}
where $\psi_\pm(0)= (\psi_+(0),\psi_-(0))$, $\psi_\pm'(0)= (\psi_+'(0),\psi_-'(0))$ and $f_i: \CO^4
\longrightarrow \CO$, $i=1,2$, are linear functions.

In this section we show that every operator $\widehat Z$ can be written in the form
\begin{equation} \label{Z2}
\widehat Z=\widehat H_0 +\widehat B
\end{equation}
$$
\DO(\widehat Z)=\DO_\max(\widehat H_0+\widehat B)
$$
where $\widehat B \in \wh\P$ is a {\it boundary pseudo potential
operator} of the form (\ref{BPP1}).

The one-dimensional Schr\"odinger operators with point
interactions $\widehat L$ are all of the form (\ref{Z}). For each
$\widehat L$, we will calculate a boundary pseudo potential
representation (\ref{Z2}) explicitly (Corollaries \ref{CorrS},
\ref{CorrDirichlet} and \ref{CorrI}).

We remark that for each $\widehat Z$ (\ref{Z}) there are, in
general, several different operators $\widehat B \in \wh\P$ such
that (\ref{Z2}) is valid. This will be shown explicitly for the
operator $\widehat L$ with Dirichlet boundary conditions at $x=0$
(Corollaries \ref{CorrS} and \ref{CorrDirichlet}). In the next
section, the non-uniqueness of $\wh B$ will be studied in more
detail.

A natural question is whether the operators $\widehat Z$ (or at
least the operators $\widehat L$) admit a (simpler) boundary
potential representation (and not only a boundary pseudo potential
representation), i.e. a representation of the form (\ref{Z2}) with
$\widehat B \in \widehat\A^1$. While this is true for a large
class of operators $\widehat L$ (see Theorems \ref{TheoIn} and
\ref{TheoSe}, in the next section) the following Theorem shows
that it is not true for all $\widehat L$, not even if we only
require $\wh B$ to be of the form (\ref{BPP1}) with $\widehat
B_3=0$.

\begin{theorem}

The set of operators $\widehat Z=\widehat H_0 + \widehat B$
(\ref{Z2}) where $\widehat B$ is of the form (\ref{BPP1}) with
$\widehat B_3=0$, does not contain all s.a. extensions of
$\widehat S$.

\end{theorem}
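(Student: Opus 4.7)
The plan is to exhibit one specific self-adjoint extension of $\widehat S$ that cannot be written as $\widehat H_0 + \widehat B$ with $\widehat B$ of the form (\ref{BPP1}) and $\widehat B_3 = 0$. A natural candidate is the \emph{decoupled Neumann} operator $\wh L_N$, acting as $\wh S^*$ on the domain
\begin{equation*}
\DO(\wh L_N) = \{\psi \in \H^2(\RE_-) \oplus \H^2(\RE_+) \, : \, \psi'_-(0) = \psi'_+(0) = 0\}.
\end{equation*}
This is self-adjoint because it is the orthogonal direct sum of the two Neumann Laplacians on the half-lines, and it plainly extends $\wh S$.

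Next I would set up the most general admissible $\wh B$. Since $\wh Z$ must extend $\wh S$, one needs $\wh B \psi = 0$ for every $\psi \in \D(\RE \setminus \{0\})$, which forces $\mbox{supp}\,\wh B \subseteq \{0\}$. Because the elements of $\F^1$ (resp.\ $\F^0$) supported at the origin are precisely the linear combinations of $\delta$ and $\delta'$ (resp.\ of $\delta$ alone), I would parametrize
\begin{equation*}
\wh B_1 = a_1 \wh\delta_- + a_2 \wh\delta_+ + b_1 \wh\delta'_- + b_2 \wh\delta'_+, \qquad \wh B_2 = c_1 \wh\delta_- + c_2 \wh\delta_+,
\end{equation*}
and use the formulas (3.5) to evaluate $\wh B \psi$ on $\psi = \chi_{\RE_-}\psi_- + \chi_{\RE_+}\psi_+$. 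The result takes the form
\begin{equation*}
\wh B \psi = A(\psi)\,\delta(x) + \bigl(b_1 \psi_-(0) + b_2 \psi_+(0)\bigr)\,\delta'(x),
\end{equation*}
where $A(\psi)$ is an explicit linear functional of $(\psi_\pm(0), \psi'_\pm(0))$. The key structural observation, and the heart of the argument, is that without the $D_x \wh B_3 D_x$ term the coefficient of $\delta'(x)$ depends only on the function values $\psi_\pm(0)$ and not on the derivatives $\psi'_\pm(0)$: the only channel that could place $\psi'_\pm(0)$ in the $\delta'$ slot would be an outer $D_x$ acting on a $\delta$-type contribution coming from $\wh B_3 D_x \psi$, and that channel has been suppressed.

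Finally I would invoke Theorem \ref{TheoA}. The boundary conditions cutting out $\DO_\max(\wh Z)$ are obtained by setting the $\delta$- and $\delta'$-coefficients of $\wh F \psi = (-2\wh\beta D_x - \wh\beta' + \wh B)\psi$ to zero, the second of which reads
\begin{equation*}
(b_1 + 1)\,\psi_-(0) + (b_2 - 1)\,\psi_+(0) = 0.
\end{equation*}
If $\wh L_N$ were realized in this class, then because its domain leaves $\psi_\pm(0)$ completely unconstrained, the displayed equation would have to hold identically in $(\psi_-(0), \psi_+(0)) \in \CO^2$, forcing $b_1 = -1$ and $b_2 = 1$ and rendering this condition vacuous. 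Only the $\delta$-condition would then remain, a single linear equation in the four unknowns $(\psi_\pm(0), \psi'_\pm(0))$, and no such single equation can impose the two-dimensional constraint $\psi'_-(0) = \psi'_+(0) = 0$. This contradiction establishes the theorem. I expect the main obstacle to be simply isolating the structural restriction in the second paragraph; once this is in hand, the rest is a short linear-algebra count that uses only (3.5) and Theorem \ref{TheoA}.
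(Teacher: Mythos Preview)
Your proposal is correct and follows essentially the same approach as the paper: both single out the decoupled Neumann operator as the obstruction, both isolate the structural fact that with $\widehat B_3=0$ the $\delta'$-coefficient of $\widehat F\psi$ can only see $\psi_\pm(0)$ and not $\psi'_\pm(0)$, and both finish with the same codimension count via Theorem~\ref{TheoA}. Your version makes the final step slightly more explicit (forcing $b_1=-1$, $b_2=1$ and then observing a single remaining linear relation cannot carve out a codimension-two domain), but the substance is identical.
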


\begin{proof}
If $\wh B_3=0$ then $\wh B \psi = \wh B_1 \psi + \wh B_2 \psi'$
and $\widehat F= -2 \wh\beta D_x-\wh\beta'+\wh B$, given by
(\ref{OperatorF}), is also of the form
$$
\wh F \psi= \wh F_1 \psi + \wh F_2 \psi'
$$
for some $\wh F_1 \in \wh\A^1$ and $\wh F_2 \in \wh\A^0$.

On the other hand, if $\wh Z \subseteq \wh S^*$ then supp $\wh B
\psi \subseteq \{0\}$ for all $\psi \in \DO(\wh B)$. The same is
then true for $\wh F_1, \wh F_2$ and so
$$
\wh F_i= a_i \wh\delta_- + b_i
\wh\delta_+ +c_i \wh\delta'_- +d_i \wh\delta'_+
$$
for some $a_i,b_i,c_i,d_i \in \CO$, $i=1,2$ and $c_2=d_2=0$.

We have from Theorem \ref{TheoA} that $\DO(\wh Z)=$ Ker $\wh F \,
\cap \, \left(\H^2(\RE_-) \oplus \H^2(\RE_+)\right)$ and so $\psi
\in \DO(\wh Z)$ satisfies:
$$
\wh F \psi =0 \Longleftrightarrow \delta(x) f_1( \psi_\pm (x),\psi_\pm'(x)) + \delta'(x) f_2( \psi_\pm (x)) =0
$$
for some linear functions $f_1: \CO^4 \longrightarrow \CO$ and
$f_2: \CO^2 \longrightarrow \CO$. This is equivalent to:
$$
\delta(x) \left[ f_1( \psi_\pm (x),\psi_\pm'(x))- f_2( \psi'_\pm
(x))\right]+D_x \left[ \delta(x) f_2( \psi_\pm (x)) \right]=0 \, .
$$
The two terms on the l.h.s. are linear independent and so:
$$
\left\{\begin{array}{l} f_1( \psi_\pm (0),\psi_\pm'(0))- f_2( \psi'_\pm (0))=0 \\
f_2( \psi_\pm (0))=0
\end{array}
\right.
$$
These conditions are unable to implement any two boundary conditions that involve two linear independent
combinations of $\psi'_-(0)$ and $\psi_+'(0)$. This is the case, for instance, of the conditions:
$$
\psi'_-(0)=0 \qquad \mbox{and} \qquad \psi'_+(0)=0
$$
which correspond to the operator $\widehat L$ with Neumann
boundary conditions at both sides of the boundary at $x=0$. Hence,
it is not possible to construct a boundary potential formulation
of all operators $\wh L$.

\end{proof}

We remark that the more general possibility $\wh B_2 \in \wh\A^1
\backslash \wh\A^0$ cannot be considered, because then
$\H^2(\RE_-) \oplus \H^2(\RE_+) \not\subseteq \DO (\wh B_2 D_x)$.
Instead, $\wh B_2 \in \wh\A^0$, which implies $\wh F_2 \in
\wh\A^0$ and so $c_2=d_2=0$.

Our approach to obtain the representation (\ref{Z2}) is then based on the following general result, which is a
Corollary of Theorem \ref{TheoA}:

\begin{corollary} \label{CorrZ}
Let $\widehat Z \subseteq \widehat S^*$. If exists
$$
\widehat F \, : \DO(\widehat F) \subseteq \D' \longrightarrow \D'
$$
such that:

(i) supp $\widehat F \psi \subseteq \{0\}$ for all $\psi \in \DO(\widehat F)$;

(ii)  Ker $(\widehat F)  \cap \left( \H^2(\RE_-) \oplus \H^2(\RE_+) \right)=\DO(\widehat Z)$,\\
then $\widehat Z$ admits the representation
\begin{equation}
\widehat Z:\DO(\widehat Z)\subset \L^2 \longrightarrow \L^2; \quad \widehat Z=\widehat H_0 +\widehat B
\label{OperatorZ}
\end{equation}
$$
\DO(\widehat Z)=\DO_\max(\widehat H_0 +\widehat B)
$$
where
$$
\widehat B= 2 \widehat\beta D_x + \widehat\beta' + \widehat F \, .
$$

\end{corollary}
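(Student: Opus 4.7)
The plan is to reduce the statement directly to Theorem \ref{TheoA} by inverting its construction. The key observation is that Theorem \ref{TheoA} takes a boundary operator $\widehat B$ and produces an auxiliary operator $-2\widehat\beta D_x - \widehat\beta' + \widehat B$ (formula (\ref{OperatorF})) that pins down the domain. Here I would simply reverse that recipe and define
$$
\widehat B := 2\widehat\beta D_x + \widehat\beta' + \widehat F,
$$
and then show that Theorem \ref{TheoA}, applied to $\widehat H_0 + \widehat B$, reproduces $\widehat Z$.

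The first step is to verify that this $\widehat B$ is a genuine boundary operator, i.e.\ $\widehat B \in \wh\B$. This amounts to checking $\mbox{supp}\,\widehat B \psi \subseteq \{0\}$ for every $\psi$ in its domain: the operators $\widehat\beta = \widehat\delta_+ - \widehat\delta_-$ and $\widehat\beta' = \widehat\delta'_+ - \widehat\delta'_-$ produce, by Theorem 2.5 and the explicit formulas at the end of Section 3, linear combinations of $\delta$ and $\delta'$ at $x=0$, while hypothesis (i) ensures that $\widehat F$ shares the same support property. Since supports behave additively under linear combinations, $\widehat B \in \wh\B$.

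Next I would apply Theorem \ref{TheoA} to the operator $\widehat Z' := \widehat H_0 + \widehat B$. That theorem yields $\widehat Z' \subseteq \widehat S^*$ together with
$$
\DO_\max(\widehat Z') = \mbox{Ker}\,\widehat F' \, \cap \, \left(\H^2(\RE_-) \oplus \H^2(\RE_+)\right),
$$
where $\widehat F' = -2\widehat\beta D_x - \widehat\beta' + \widehat B$. By construction $\widehat F' = \widehat F$, and hypothesis (ii) then gives $\DO_\max(\widehat Z') = \DO(\widehat Z)$. On this common domain both operators act as $\widehat S^*$: the one by the standing hypothesis $\widehat Z \subseteq \widehat S^*$, the other by Theorem \ref{TheoA}. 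Hence $\widehat Z = \widehat Z' = \widehat H_0 + \widehat B$ as operators, which establishes the representation (\ref{OperatorZ}).

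I do not anticipate any real obstacle; the corollary is essentially a bookkeeping reversal of Theorem \ref{TheoA}. The only mildly delicate point I would double-check is a compatibility of domains: $\widehat B$ must be defined on all of $\H^2(\RE_-) \oplus \H^2(\RE_+)$ for the conclusion of Theorem \ref{TheoA} to be invoked exactly as stated. This follows from point (3) of Section 3, which shows that $\widehat\beta D_x$ and $\widehat\beta'$ are well defined on that space, combined with hypothesis (ii), which forces $\DO(\widehat F) \supseteq \H^2(\RE_-) \oplus \H^2(\RE_+)$ so that the intersection in (ii) makes sense.
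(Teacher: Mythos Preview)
Your proposal is correct and follows essentially the same approach as the paper: define $\widehat B = 2\widehat\beta D_x + \widehat\beta' + \widehat F$, verify it is a boundary operator, apply Theorem \ref{TheoA} so that the auxiliary operator collapses back to $\widehat F$, invoke hypothesis (ii) to identify the domain, and conclude equality since both operators are restrictions of $\widehat S^*$. Your added remark on domain compatibility is a reasonable extra check that the paper leaves implicit.
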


\begin{proof}
It follows from the definition of $\wh B$ that if supp $\wh F \psi
\subseteq \{0\}$ for all $\psi \in \DO(\wh F)$, then also supp
$\wh B \psi \subseteq \{0\}$ for all $\psi \in \DO(\wh B)$.

Consider the operator $\wh H_0 + \wh B$. From Theorem \ref{TheoA}
$$
\DO_{max} (\wh H_0 + \wh B)= \mbox{Ker} \, \, (\wh B -2 \wh\beta
D_x -\wh\beta') \cap \left(\H^2(\RE_-) \oplus \H^2(\RE_+) \right)
$$
$$
\ \ \ \ \ \ \ \ \ \ \ \ = \mbox{Ker} \, \, (\wh F) \cap
\left(\H^2(\RE_-) \oplus \H^2(\RE_+)\right) = \DO(\wh Z)
$$
which proves the second formula in (\ref{OperatorZ}).

Since, by assumption, $\wh Z \subseteq \wh S^*$, and from Theorem \ref{TheoA} also $\wh H_0 +\wh B \subseteq \wh
S^*$, we conclude that $\wh Z= \wh H_0+\wh B$.

\end{proof}

The main point in determining a boundary operator representation for all operators $\widehat Z$ (\ref{Z}) is
then to determine, for each $\wh Z$, a suitable operator $\widehat F$. In general, there are many possible
choices of the operator $\widehat F$ (and consequently of $\widehat B$). Since every $\widehat Z$ is
characterized by two boundary conditions at $x=0$, one natural possibility is the following:

\begin{theorem}\label{FF}
Let $\wh Z$ be the operator (\ref{Z}), and let
$$
\widehat F=\widehat F_1 + D_x \widehat F_2
$$
where
$$
\widehat F_i : \H^2(\RE_-) \oplus \H^2(\RE_+) \subset \A^1
\longrightarrow \D' \quad , \quad i=1,2
$$
are the linear operators acting as ($\wh\delta_\pm
=(\wh\delta_+,\wh\delta_-)$):
$$
\widehat F_i \psi = f_i(\wh\delta_\pm,\wh\delta_\pm D_x) \psi
$$
and $f_i: \CO^4 \longrightarrow \CO$, $i=1,2$ are the linear
functions that impose the boundary conditions of $\DO(\widehat Z)$
(\ref{Z}).

 Then $\wh F$ satisfies the conditions (i) and (ii) of
Corollary \ref{CorrZ}:

(i) supp $\widehat F \psi \subseteq \{0\}$ for all $\psi \in
\DO(\wh F)$;

(ii) Ker $\widehat F \, \, \cap \, \, \left(\H^2(\RE_-) \oplus
\H^2(\RE_+) \right)=\DO(\widehat Z)$.

\end{theorem}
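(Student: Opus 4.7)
The plan is to verify conditions (i) and (ii) by direct computation, reducing everything to the explicit formulas for $\wh\delta_\pm \psi$ and $\wh\delta_\pm D_x \psi$ established in Section 3 (equation (3.5)). Since $f_1, f_2$ are linear, the operators $\wh F_i$ act linearly on $\psi \in \H^2(\RE_-) \oplus \H^2(\RE_+)$, and each of the four ``building blocks'' $\wh\delta_\pm \psi$ and $\wh\delta_\pm D_x \psi$ is, by (3.5), of the form $c\, \delta(x)$ with $c \in \{\psi_\pm(0), \psi'_\pm(0)\}$. By linearity of $f_i$ this yields the key identity
\[
\wh F_i \psi \;=\; f_i\bigl(\psi_\pm(0),\psi'_\pm(0)\bigr)\, \delta(x), \qquad i=1,2,
\]
which is essentially the only computation needed.

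For (i), the identity above gives $\wh F_1 \psi = c_1 \delta(x)$ and $D_x \wh F_2 \psi = c_2 \delta'(x)$ for constants $c_i = f_i(\psi_\pm(0),\psi'_\pm(0))$, so $\wh F\psi = c_1 \delta + c_2 \delta'$ has support contained in $\{0\}$. For (ii), observe that on $\H^2(\RE_-)\oplus \H^2(\RE_+)$,
\[
\wh F \psi \;=\; f_1\bigl(\psi_\pm(0),\psi'_\pm(0)\bigr)\,\delta(x)\;+\;f_2\bigl(\psi_\pm(0),\psi'_\pm(0)\bigr)\,\delta'(x).
\]
Since $\delta$ and $\delta'$ are linearly independent in $\D'$, the equation $\wh F \psi = 0$ is equivalent to the simultaneous vanishing of both coefficients, i.e.\ to $f_1(\psi_\pm(0),\psi'_\pm(0)) = 0$ and $f_2(\psi_\pm(0),\psi'_\pm(0)) = 0$. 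But these are precisely the two boundary conditions cutting out $\DO(\wh Z)$ from $\H^2(\RE_-) \oplus \H^2(\RE_+)$, so $\mathrm{Ker}\,\wh F \cap (\H^2(\RE_-) \oplus \H^2(\RE_+)) = \DO(\wh Z)$.

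There is no substantive obstacle: once the two formulas from (3.5) are invoked, both conclusions follow by linearity plus the linear independence of $\delta$ and $\delta'$. The only item requiring a brief justification is that the expression $f_i(\wh\delta_\pm, \wh\delta_\pm D_x)\psi$, which a priori is a sum of operators evaluated at $\psi$, can be rewritten as $f_i$ evaluated at the numerical boundary data $(\psi_\pm(0), \psi'_\pm(0))$ multiplied by $\delta(x)$; this is immediate from linearity of $f_i$ together with (3.5), but should be stated explicitly so that the connection between the ``operator-valued'' notation and the boundary functionals is unambiguous.
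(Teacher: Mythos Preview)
Your proposal is correct and follows essentially the same approach as the paper: both arguments establish the key identity $\wh F_i\psi = f_i(\psi_\pm(0),\psi'_\pm(0))\,\delta(x)$ (the paper states it directly, you derive it from (3.5) and linearity), then deduce (i) from the support of $\delta,\delta'$ and (ii) from their linear independence in $\D'$. The only difference is cosmetic: you spell out the passage from the operator-valued expression $f_i(\wh\delta_\pm,\wh\delta_\pm D_x)\psi$ to the scalar-times-$\delta$ form more explicitly than the paper does.
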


\begin{proof}

We start by noticing that for all $\psi \in \DO(\wh F)$
$$
\wh F_i \psi = \delta (x) f_i(\psi_\pm(0),\psi_\pm'(0))\, , \quad
i=1,2 \, .
$$
Hence, supp $(\widehat F_i\psi) \subseteq \{0\}$ and the condition (i) is satisfied. Moreover,
\begin{align*}
\psi \in \text{Ker} \, \widehat F &\Longleftrightarrow \delta(x) f_1(\psi_\pm(0),\psi_\pm'(0))+
\delta'(x)f_2(\psi_\pm(0),\psi_\pm'(0))=0 \\
&\Longleftrightarrow  f_1(\psi_\pm(0),\psi_\pm'(0))=f_2(\psi_\pm(0),\psi_\pm'(0))=0 \, .
\end{align*}
Hence, Ker $(\widehat F) \, \, \cap \, \, \left(\H^2(\RE_-) \oplus
\H^2(\RE_+)\right) =\DO(\widehat Z)$ and so $\widehat F$ also
satisfies the condition (ii).

\end{proof}

We conclude from Corollary \ref{CorrZ} and the previous Theorem that each operator $\widehat Z$ (\ref{Z}) admits
a boundary pseudo potential representation of the form (\ref{Z2}), with
$$
\widehat B  = \wh\beta'  + 2 \wh\beta D_x + f_1(\wh\delta_\pm ,\wh\delta_\pm D_x) + D_x f_2(\wh\delta_\pm
,\wh\delta_\pm D_x) \, .
$$

\begin{subsection}*{Boundary pseudo potential representation of the operators $\widehat L$}

The previous results are now used to determine a boundary pseudo potential representation for each $\wh L$,
explicitly. It is also shown, using a particular example, that each $\widehat L$ may admit more than one
representation of the form (\ref{Z2}) and, in particular, it may admit a simpler {\it boundary potential}
representation (this question will be studied in detail in the next section).

All operators $\widehat L$ are s.a. restrictions of $\widehat S^*$
of the form of $\widehat Z$ (\ref{Z}). The two boundary conditions
that characterize $\DO(\widehat L)$ can be \underline{separating},
in which case they can be written as \cite{Kurasov1,Seba1}:
\begin{equation} \label{SBC}
\left\{ \begin{array}{l} a_- \psi'(0^-)=b_- \psi(0^-)\\
a_+\psi'(0^+)=b_+ \psi(0^+) \end{array} \right. \quad , \quad
(a_{\pm},b_{\pm})\in \RE^2 \backslash \{(0,0)\}
\end{equation}
and lead to {\it confining} Schr\"odinger operators of the form $\widehat L=\widehat L_- \oplus \widehat
L_+$:
$$
\widehat L_- \oplus \widehat L_+ : \DO(\widehat L_-) \oplus \DO
(\widehat L_+) \longrightarrow \L^2; \quad \psi \longrightarrow
(\widehat L_- \oplus \widehat L_+) \psi= \widehat S^* \psi
$$
where
$$
\DO(\widehat L_{\pm}) =\{ \psi_\pm=  \chi_{\RE_{\pm}}\psi:\, \psi
\in \H^2 \, \wedge a_{\pm} \psi'(0) = b_{\pm} \psi(0) \} \, .
$$
Notice that $\widehat L_{\pm}$ are s.a. extensions of the restrictions of $\widehat H_0$ to $\D(\RE_{\pm})$,
\cite{Dias3}. The operators $\widehat L$ of the form $\widehat L_- \oplus \widehat L_+$ commute with the
projection operators $\widehat\chi_{\RE_{\pm}}=\chi_{\RE_{\pm}}\cdot$ and provide a "global" description of
quantum systems confined to either of the domains $\RE_-$ or $\RE_+$ \cite{Dias3}.

The other possibility is that the s.a. boundary conditions are \underline{interacting} ($a,c \in \RE$, $b\in
\CO$: $(1+\ol b)(1-b) -ac \not=0$), \cite{Albeverio4}:
\begin{equation} \label{IBC}
\left\{ \begin{array}{l}
\psi(0^+)-\psi(0^-)  =  a \left(\psi'(0^+)+\psi'(0^-) \right) + \overline{b} \left(\psi(0^+)+\psi(0^-)\right) \\
\psi'(0^+)-\psi'(0^-)  =  c \left(\psi(0^+)+\psi(0^-)\right) - b \left(\psi'(0^+)+\psi'(0^-)\right)
\end{array}  \right.
\end{equation}
in which case they relate the values of the wave function at the two sides of the boundary. The associated
operator $\widehat L$ cannot be written in the form $\widehat L_- \oplus \widehat L_+$. This kind of operators
describe quantum systems formed by two sub-systems which are not isolated from each other (as in the case of
separating boundary conditions) but instead display some sort of interaction at their common boundary.

In this section we construct a boundary pseudo potential
formulation of both the separating and interacting operators
$\widehat L$. Using the prescription of Theorem \ref{FF}, we get

\begin{corollary} \label{CorrS}
The {\it separating} Schr\"odinger operators can be written in the
form:
$$
\widehat L^S:\DO(\widehat L^S) \subseteq \L^2 \longrightarrow
\L^2, \quad \widehat L^S=\widehat H_0+\widehat B^S
$$
$$
\DO(\widehat L^S)=\DO_\max(\widehat H_0+\widehat B^S)
$$
where
$$
\widehat B^S:\H^2(\RE_-) \oplus \H^2(\RE_+) \longrightarrow \D',
\quad \widehat B^S=2\widehat\beta D_x + \widehat\beta' + \widehat
F^S
$$
and
$$
\widehat F^S:\H^2(\RE_-) \oplus \H^2(\RE_+) \longrightarrow \D',
\quad \widehat F^S=\widehat F_+^S+D_x \widehat F_-^S
$$
$$
\widehat F_{\pm}^S:\H^2(\RE_-) \oplus \H^2(\RE_+) \longrightarrow
\D', \quad \widehat F^S_{\pm}= a_{\pm} \widehat\delta_{\pm} D_x
-b_{\pm} \widehat\delta_{\pm}
$$
\end{corollary}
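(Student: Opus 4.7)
The plan is to apply Theorem \ref{FF} and Corollary \ref{CorrZ} directly, specialized to the separating boundary conditions \eqref{SBC}. First, I would identify the two linear functionals $f_1,f_2\colon \CO^4 \to \CO$ that encode these conditions, for instance by taking
$$
f_1(\psi_\pm(0),\psi'_\pm(0)) = a_+\psi'_+(0) - b_+\psi_+(0),\qquad f_2(\psi_\pm(0),\psi'_\pm(0)) = a_-\psi'_-(0) - b_-\psi_-(0).
$$
With this choice, $\DO(\widehat L^S)$ is exactly the subspace of $\H^2(\RE_-)\oplus\H^2(\RE_+)$ on which both $f_1$ and $f_2$ vanish. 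Note that the two conditions are decoupled, so one naturally associates $f_1$ with the right ($+$) boundary datum and $f_2$ with the left ($-$) one.

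Next, I would promote each $f_i$ to an operator by substituting shifting deltas for the boundary values, as prescribed by Theorem \ref{FF}. Using the explicit formulas (3.4) for the action of $\widehat\delta_\pm$ and $\widehat\delta_\pm D_x$ on $\H^2(\RE_-)\oplus\H^2(\RE_+)$, a direct computation gives
$$
\widehat F_+^S \psi = a_+\widehat\delta_+ D_x\psi - b_+\widehat\delta_+\psi = \delta(x)\bigl[a_+\psi'_+(0)-b_+\psi_+(0)\bigr],
$$
and analogously $\widehat F_-^S \psi = \delta(x)\bigl[a_-\psi'_-(0)-b_-\psi_-(0)\bigr]$. Hence the operators $\widehat F_\pm^S$ realize the functionals $f_1,f_2$ in the precise sense required by Theorem \ref{FF}, and the combination $\widehat F^S = \widehat F_+^S + D_x\widehat F_-^S$ plays the role of the operator $\widehat F$ of that theorem (with $\widehat F_1 = \widehat F_+^S$ and $\widehat F_2 = \widehat F_-^S$).

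Theorem \ref{FF} then immediately gives that $\widehat F^S$ satisfies conditions (i) and (ii) of Corollary \ref{CorrZ}: its image is supported at $\{0\}$, being a linear combination of $\delta(x)$ and $\delta'(x)$, and its kernel intersected with $\H^2(\RE_-)\oplus\H^2(\RE_+)$ coincides with $\DO(\widehat L^S)$, because the linear independence of $\delta$ and $\delta'$ forces both $f_1$ and $f_2$ to vanish separately. Applying Corollary \ref{CorrZ} with this choice of $\widehat F^S$ yields the desired representation $\widehat L^S = \widehat H_0 + \widehat B^S$ on its maximal domain, with $\widehat B^S = 2\widehat\beta D_x + \widehat\beta' + \widehat F^S$.

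I do not expect any substantive obstacle: the result is essentially a bookkeeping specialization of the general machinery of Theorem \ref{FF} and Corollary \ref{CorrZ}. The only point meriting care is the verification that the compact symbolic expression $a_\pm\widehat\delta_\pm D_x - b_\pm\widehat\delta_\pm$ indeed produces the correct Dirac-supported distribution when applied to a generic $\psi = \chi_{\RE_-}\psi_- + \chi_{\RE_+}\psi_+$, which is a direct consequence of the formulas (3.4).
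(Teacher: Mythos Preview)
Your proposal is correct and follows essentially the same route as the paper: the paper's proof simply verifies directly that $\widehat F^S$ satisfies the two hypotheses of Corollary~\ref{CorrZ} by computing $\widehat F^S_\pm\psi = \delta(x)\bigl[a_\pm\psi'_\pm(0)-b_\pm\psi_\pm(0)\bigr]$ and reading off the kernel, which is exactly what you do (you additionally cite Theorem~\ref{FF} as the organizing principle, which the paper leaves implicit).
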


\begin{proof}
The proof follows from the Corollary \ref{CorrZ} since supp
$\widehat F_{\pm}^S \psi \subseteq \{0\}$ and
\begin{align*}
\widehat F^S \psi =0  \Longleftrightarrow \widehat F_{\pm}^S \psi =0 & \Longleftrightarrow a_{\pm} \delta (x)
\psi'_{\pm}(x) -b_{\pm} \delta (x) \psi_{\pm} (x)=0 \\
& \Longleftrightarrow a_{\pm} \psi'_{\pm}(0) -b_{\pm} \psi_{\pm}
(0)=0
\end{align*}
which shows that Ker $\widehat F^S=\DO(\widehat L^S)$.
\end{proof}

The previous result is valid for general separating boundary conditions. For particular cases, the expression of
$\widehat L^S$ simplifies considerably. For instance, for Dirichlet boundary conditions (i.e. $a_{\pm}=0$,
$b_{\pm}=1$) the operator $\widehat L^S$ becomes
$$
\widehat L^D=-D_x^2+2\widehat\beta D_x + \widehat\beta' +\widehat F^D_1
$$
where
$$
\widehat F^D_1=-\widehat\delta_+ -D_x \widehat\delta_- \, .
$$

As we have already pointed out, for each $\widehat L$, there are many possible choices of the operator $\widehat
F$. To illustrate this let us introduce the operator $\widehat\alpha$ and its "derivatives"
$\widehat\alpha^{(n)}$:
$$
\widehat\alpha^{(n)} : \A^{n} \longrightarrow \A^{n}, \quad
\widehat\alpha^{(n)}=\widehat\delta^{(n)}_+ +
\widehat\delta^{(n)}_- \, .
$$
We then have, for instance

\begin{corollary}\label{CorrDirichlet}
The Schr\"odinger operator satisfying Dirichlet boundary
conditions at both sides of the boundary at $x=0$, i.e.
$\psi(0^{\pm})=0$, can also be written as
$$
\widehat L^D: \DO(\widehat L^D) \subseteq \L^2 \longrightarrow \L^2, \quad \widehat L^D=\widehat H_0+
\widehat\alpha -\widehat\beta'
$$
$$
\DO(\widehat L^D)=\DO_\max(\widehat H_0+ \widehat\alpha -\widehat\beta')
$$
which yields a boundary potential representation of $\wh L^D$.
\end{corollary}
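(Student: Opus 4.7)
The plan is to invoke Corollary \ref{CorrZ} by producing a suitable operator $\widehat F$ for the boundary operator $\widehat B = \widehat\alpha - \widehat\beta'$. By construction, the boundary potential representation \eqref{OperatorZ} holds with $\widehat B = 2\widehat\beta D_x + \widehat\beta' + \widehat F$, so I would set
\[
\widehat F := \widehat B - 2\widehat\beta D_x - \widehat\beta' = \widehat\alpha - 2\widehat\beta' - 2\widehat\beta D_x,
\]
and then check the two hypotheses (i) and (ii) of Corollary \ref{CorrZ} with this choice.

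The only real calculation is evaluating $\widehat F \psi$ for an arbitrary $\psi = \chi_{\RE_-}\psi_- + \chi_{\RE_+}\psi_+ \in \H^2(\RE_-) \oplus \H^2(\RE_+)$, using the explicit formulas (3.5) for the shifting delta operators. I would compute term by term:
\begin{align*}
\widehat\alpha\,\psi &= \delta(x)\bigl[\psi_+(0)+\psi_-(0)\bigr],\\
2\widehat\beta'\psi &= 2\delta'(x)\bigl[\psi_+(0)-\psi_-(0)\bigr] - 2\delta(x)\bigl[\psi'_+(0)-\psi'_-(0)\bigr],\\
2\widehat\beta D_x \psi &= 2\delta(x)\bigl[\psi'_+(0)-\psi'_-(0)\bigr],
\end{align*}
and the two $\delta(x)$ contributions involving the derivatives cancel, leaving
\[
\widehat F \psi = \delta(x)\bigl[\psi_+(0)+\psi_-(0)\bigr] - 2\delta'(x)\bigl[\psi_+(0)-\psi_-(0)\bigr].
\]

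From this explicit form, (i) is immediate since $\mathrm{supp}\,\widehat F \psi \subseteq \{0\}$. For (ii), the linear independence of $\delta(x)$ and $\delta'(x)$ forces $\widehat F \psi = 0$ if and only if $\psi_+(0)+\psi_-(0) = 0$ and $\psi_+(0)-\psi_-(0) = 0$, i.e.\ $\psi_+(0) = \psi_-(0) = 0$, which is precisely the Dirichlet condition characterizing $\DO(\widehat L^D)$. Hence $\mathrm{Ker}\,\widehat F \cap (\H^2(\RE_-) \oplus \H^2(\RE_+)) = \DO(\widehat L^D)$, and Corollary \ref{CorrZ} yields the claimed representation with $\DO(\widehat L^D) = \DO_{\max}(\widehat H_0 + \widehat\alpha - \widehat\beta')$.

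There is no serious obstacle here; the proof is essentially bookkeeping once the right $\widehat F$ is identified. The only point that requires mild care is recognizing that although the naive candidate $\widehat F = -2\widehat\beta D_x$ (which would give $\widehat B = \widehat\beta'$ and thus recover $\widehat S^*$ itself) does not impose any boundary conditions, the symmetric combination $\widehat\alpha - \widehat\beta'$ is chosen precisely so that the $\widehat\beta D_x$ terms cancel and the two independent distributional components $\delta$ and $\delta'$ produce two independent scalar conditions on $\psi_\pm(0)$ — giving Dirichlet rather than, say, Neumann data. This explains why $\widehat L^D$ admits the simpler boundary potential representation announced in the corollary, in contrast to the boundary pseudo potential representation obtained earlier.
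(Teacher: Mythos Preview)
Your proof is correct and follows essentially the same route as the paper. The paper writes the auxiliary operator as $\widehat F^D_2=\widehat\alpha-2D_x\widehat\beta$, which coincides with your $\widehat F=\widehat\alpha-2\widehat\beta'-2\widehat\beta D_x$ via the Leibniz-type identity $D_x\widehat\beta=\widehat\beta'+\widehat\beta D_x$; it then verifies the same kernel condition and checks the sum $\widehat H_0+2\widehat\beta D_x+\widehat\beta'+\widehat F^D_2=\widehat H_0+\widehat\alpha-\widehat\beta'$, whereas you obtain $\widehat F$ directly from $\widehat B$ using the formula in Theorem~\ref{TheoA}/Corollary~\ref{CorrZ}.
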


\begin{proof}

Let us define
$$
\widehat F^D_2: \H^2(\RE_-) \oplus \H^2(\RE_+) \longrightarrow \D',\quad \widehat F^D_2=\widehat\alpha -2D_x
\widehat\beta \, .
$$
Then supp $(\widehat F^D_2 \psi) \subseteq \{0\}$, $\forall \psi \in \DO(\wh F^D_2)$ and
$$
\widehat F^D_2 \psi=0 \Longleftrightarrow \left\{
\begin{array}{l}
\delta(x) (\psi_+(0)+\psi_-(0))=0\\
\delta'(x) (\psi_+(0)-\psi_-(0))=0 \end{array} \right.
\Longleftrightarrow \psi_+(0)=\psi_-(0)=0 \, .
$$
Hence, Ker $\widehat F^D_2=\DO(\widehat L^D)$ and so $\widehat
F^D_2$ satisfies the two conditions of Corollary \ref{CorrZ}. The
proof is concluded by
$$
\widehat H_0 +2 \widehat\beta D_x + \widehat\beta' + \widehat F^D_2= \widehat H_0+ \widehat\alpha
-\widehat\beta' \, .
$$
\end{proof}

Finally, let $\widehat L^I$ be the Schr\"odinger operator
satisfying the {\it interacting} boundary conditions (\ref{IBC})
at $x=0$. A boundary potential representation of $\widehat L^I$
can also be determined using the method of Theorem \ref{FF}:

\begin{corollary}\label{CorrI}
The operators $\widehat L^I$ admit the representation
$$
\widehat L^I:\DO(\widehat L^I) \subseteq \L^2 \longrightarrow
\L^2, \quad \widehat L^I=\widehat H_0+\widehat B^I
$$
$$
\DO(\widehat L^I)=\DO_\max(\widehat H_0+\widehat B^I)
$$
where
$$
\widehat B^I:\H^2(\RE_-) \oplus \H^2(\RE_+) \longrightarrow \D', \quad \widehat B^I=c \widehat\alpha -b
\widehat\alpha D_x +a D_x \widehat\alpha D_x + \overline{b} D_x \widehat\alpha \, .
$$
\end{corollary}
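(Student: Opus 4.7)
The plan is to mimic the proof of Corollary \ref{CorrS} (and Corollary \ref{CorrDirichlet}): apply Corollary \ref{CorrZ} to a carefully chosen $\widehat F^I$. The natural choice, dictated by the recipe of Theorem \ref{FF}, is to take $\widehat F^I$ so that $2\widehat\beta D_x + \widehat\beta' + \widehat F^I = \widehat B^I$, i.e. to \emph{set}
$$
\widehat F^I := \widehat B^I - 2\widehat\beta D_x - \widehat\beta' = c\widehat\alpha - b\widehat\alpha D_x + aD_x\widehat\alpha D_x + \overline b D_x \widehat\alpha - 2\widehat\beta D_x - \widehat\beta',
$$
and verify the two conditions (i) and (ii) of Corollary \ref{CorrZ} directly for this $\widehat F^I$. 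Once this is done, Corollary \ref{CorrZ} yields $\widehat L^I = \widehat H_0 + \widehat B^I$ with $\DO(\widehat L^I) = \DO_{\max}(\widehat H_0+\widehat B^I)$, which is exactly the claim.

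First I would check condition (i). All the constituent operators ($\widehat\alpha$, $\widehat\beta$, $\widehat\beta'$, together with one or two factors of $D_x$) belong to $\widehat{\mathcal P}$, and for each $\psi\in \H^2(\RE_-)\oplus\H^2(\RE_+)$ their outputs are linear combinations of $\delta(x)$ and $\delta'(x)$; hence $\mathrm{supp}\,\widehat F^I\psi\subseteq\{0\}$ automatically.

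The core step is condition (ii): compute $\widehat F^I\psi$ explicitly for $\psi=\chi_{\RE_-}\psi_- + \chi_{\RE_+}\psi_+$ and identify its kernel with $\DO(\widehat L^I)$. Using the formulas (3.4) for $\widehat\delta_\pm$ and $\widehat\delta'_\pm$ on $\H^2(\RE_-)\oplus\H^2(\RE_+)$, I would record
\begin{align*}
\widehat\alpha\psi &= \delta(x)(\psi_+(0)+\psi_-(0)), & \widehat\alpha D_x\psi &= \delta(x)(\psi_+'(0)+\psi_-'(0)),\\
D_x\widehat\alpha\psi &= \delta'(x)(\psi_+(0)+\psi_-(0)), & D_x\widehat\alpha D_x\psi &= \delta'(x)(\psi_+'(0)+\psi_-'(0)),\\
\widehat\beta D_x\psi &= \delta(x)(\psi_+'(0)-\psi_-'(0)),
\end{align*}
and $\widehat\beta'\psi = \delta'(x)(\psi_+(0)-\psi_-(0)) - \delta(x)(\psi_+'(0)-\psi_-'(0))$. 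Substituting these into the expression for $\widehat F^I\psi$ and collecting the coefficients of $\delta(x)$ and $\delta'(x)$ gives
\begin{align*}
\widehat F^I\psi &= \delta(x)\bigl[c(\psi_+(0){+}\psi_-(0)) - b(\psi_+'(0){+}\psi_-'(0)) - (\psi_+'(0){-}\psi_-'(0))\bigr]\\
&\quad + \delta'(x)\bigl[a(\psi_+'(0){+}\psi_-'(0)) + \overline b(\psi_+(0){+}\psi_-(0)) - (\psi_+(0){-}\psi_-(0))\bigr].
\end{align*}
By the linear independence of $\delta$ and $\delta'$, the equation $\widehat F^I\psi=0$ is equivalent to the two interacting boundary conditions (\ref{IBC}), so $\mathrm{Ker}\,\widehat F^I \cap (\H^2(\RE_-)\oplus\H^2(\RE_+)) = \DO(\widehat L^I)$.

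The only mildly delicate bookkeeping is the $\widehat\beta'$ term (where $\delta'(x)\psi_\pm(x)$ contributes both a $\delta'$ part via $\psi_\pm(0)$ and a $\delta$ part via $-\psi_\pm'(0)$, by Leibniz on the extended Hörmander product), since it is what supplies the $-(\psi_+'(0)-\psi_-'(0))$ piece that combines with the $-2\widehat\beta D_x\psi$ to produce the correct coefficient $-1$ in front of $\psi_+'(0)-\psi_-'(0)$. Once this is tracked carefully the identification is immediate, and Corollary \ref{CorrZ} concludes the proof.
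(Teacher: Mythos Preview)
Your proof is correct and follows essentially the same route as the paper's: both arguments apply Corollary~\ref{CorrZ} to the operator $\widehat F^I=\widehat B^I-2\widehat\beta D_x-\widehat\beta'$ and check that its kernel on $\H^2(\RE_-)\oplus\H^2(\RE_+)$ is exactly $\DO(\widehat L^I)$. The only organizational difference is that the paper runs the computation forward via the recipe of Theorem~\ref{FF} (writing $\widehat F=\widehat F_1+D_x\widehat F_2$ with $\widehat F_1=c\widehat\alpha-b\widehat\alpha D_x-\widehat\beta D_x$ and $\widehat F_2=a\widehat\alpha D_x+\overline b\,\widehat\alpha-\widehat\beta$, then simplifying $2\widehat\beta D_x+\widehat\beta'+\widehat F$ to obtain $\widehat B^I$), whereas you start from $\widehat B^I$ and verify directly; the two are the same calculation read in opposite directions.
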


\begin{proof}
Setting $\widehat F= \widehat F_1+ D_x  \widehat F_2$ with
$$
\widehat F_1: \H^2(\RE_-) \oplus \H^2(\RE_+) \longrightarrow \D', \quad \widehat F_1= c \widehat\alpha -b
\widehat\alpha D_x - \widehat\beta D_x
$$
$$
\widehat F_2: \H^2(\RE_-) \oplus \H^2(\RE_+) \longrightarrow \D', \quad \widehat F_2= a \widehat\alpha D_x
+\overline{b} \widehat\alpha - \widehat\beta
$$
we have supp $\widehat F \psi \subseteq \{0\}$ for all $\psi \in \DO(\widehat F)$, and
$$
\DO(\widehat L^I) = {\rm Ker} (\widehat F_1) \cap {\rm Ker} (\widehat F_2) = {\rm Ker} (\widehat F) \, .
$$
It follows that
$$
\widehat L^I= \widehat S^* + \widehat F = -D_x^2 + 2 \widehat\beta D_x +\widehat\beta'+ c \widehat\alpha -b
\widehat\alpha D_x - \widehat\beta D_x + D_x(a \widehat\alpha D_x +\overline{b} \widehat\alpha - \widehat\beta)
$$
$$
=\widehat H_0 +\widehat B^I
$$
which concludes the proof.
\end{proof}

\end{subsection}

\end{section}

\begin{section}{The $a\delta+b\delta'$ potential}

In this section we address a problem which, in some sense, is the inverse of the one studied in the previous
section. We are given a singular boundary potential $B$ and the aim is to determine the explicit form of the
operators $\widehat H_0+ \widehat B$, where $\widehat B$ is a boundary operator associated with $B$.

The crucial point here is the definition of the association between $B$ and $\widehat B$. Recall that the
simplest definition $\widehat B=B \cdot$ (where $\cdot$ is the standard product of a distribution by a test
function; or some obvious extension of it) yields boundary operators with very restricted domains.

Other ways of implementing the boundary potential $B$ yield a
richer structure. As we have already mentioned in the
introduction, one possible interpretation of $\widehat H=\widehat
H_0+ \widehat B$ is that it stands for the norm resolvent limit of
a sequence of operators $\widehat H_n=\widehat H_0+\widehat B_n$,
where $\widehat B_n=B_n \cdot$ and $B_n$ is a sequence of regular
potentials such that $B_n \longrightarrow B$ in $\D'$. The case
$B=a\delta(x)+b\delta'(x)$, with $a,b \in \RE$, has been
extensively studied in the literature (see
\cite{Golovaty3,Zolotaryuk1} and the references therein). It turns
out that for $B_n \longrightarrow B=a \delta(x)$ in $\D'$, the
norm resolvent limit of $\widehat H_n$ is (for a large class of
regular potentials $B_n$)
\begin{equation}\label{K}
\widehat H_{a\delta}:\DO(\widehat H_{a\delta}) \subset \L^2
\longrightarrow \L^2, \quad \psi \longrightarrow \widehat
H_{a\delta} \psi =\widehat S^* \psi
\end{equation}
$$
\DO(\widehat H_{a\delta})=\left\{  \psi \in \H^2(\RE_-) \oplus \H^2(\RE_+): \,  \left\{
\begin{array}{l} \psi(0^+)=\psi(0^-) \\
 \psi'(0^+)-\psi'(0^-) = a \psi(0) \end{array} \right. \right\}
$$
and is independent of the particular sequence $B_n$ such that $B_n \longrightarrow a\delta(x)$.

The case is different if $B=b \delta'(x)$. Golovaty, Man'ko and Hryniv \cite{Golovaty1,Golovaty2,Golovaty3} and
Zolotaryuk \cite{Zolotaryuk1} determined families of sequences $B_n \longrightarrow B$, displaying a single
distributional limit $B=b\delta'(x)$ but yielding, in the norm resolvent sense, the family of limit operators:
\begin{equation}\label{K'}
\widehat H_{b\delta',\theta}:\DO(\widehat H_{b\delta',\theta})
\subset \L^2 \longrightarrow \L^2, \quad \psi \longrightarrow
\widehat H_{b\delta',\theta} \psi =\widehat S^* \psi
\end{equation}
$$
\DO(\widehat H_{b\delta',\theta})=\{\psi \in \H^2(\RE_-) \oplus \H^2(\RE_+): \,  \psi(0^+)=\theta\psi(0^-) \,
\wedge \, \theta \psi'(0^+)=\psi'(0^-) \}
$$
where the parameter $\theta$ depends on the shape of the potentials $B_n$.

In this section we will study the Schr\"odinger operators $\wh
H=\wh H_0 + \wh B$, where $\wh B $ is a {\it boundary potential
operator} in $\wh\A^1$:
$$
\wh B = \psi* B_1 + B_2 * \psi
$$
with $B_1, B_2 \in \A^1$. The operator $\wh B$ provides a natural
operator representation of the boundary potential $B=B_1+B_2=a
\delta(x) + b\delta'(x)$. Notice that $\wh B$ is an extension of
$B \cdot$ to $\A^1 \supset \H^2(\RE_-) \oplus \H^2(\RE_+)$.

In subsection 6.1, we obtain in Theorem \ref{CorrH} the explicit
form of the operators $\wh H=\wh H_0+\wh B$. Then in Theorems
\ref{TheoIn}, \ref{TheoSe} and Corollary \ref{CorrInt}, we
determine which operators $\wh H$ are s.a. and conversely, which
s.a. extensions of $\wh S$ admit a boundary potential
representation of the form $\wh H$. We will see that the two sets
of operators (the ones of the form $\wh H=\wh H_0+\wh B$, and the
Schr\"odinger operators $\wh L$) have a large intersection, but do
not coincide, nor one contains the other. In Theorems
\ref{TheoIn}, \ref{TheoSe} and Corollary \ref{CorrInt}, we also
determine, for the s.a. case, the entire set of boundary potential
operators $\wh B \in \wh\A^1$ that yield a single operator $\wh
L$. Finally, in Theorem \ref{SQF}, we calculate the sesquilinear
form associated with $\wh H$.

In subsection 6.2, we consider the particular cases $B=a\delta(x)$
and $B=b\delta'(x)$, and compare the results of the boundary
operator formulations with the results of the norm resolvent
approach (Corollaries \ref{CorrD}, \ref{CorrD'1} and
\ref{CorrD'2}).

\begin{subsection}{Schr\"odinger operators with boundary potentials}

Let $\wh B \in \wh\A^1$ be of the form (\ref{B0}) with $B_i=c_i
\delta(x) + b_i \delta'(x)$, $c_i,b_i \in \CO$, $i=1,2$. Then:
\begin{equation} \label{B1}
\wh B= c_1 \wh\delta_- + c_2 \wh\delta_+ +b_1 \wh\delta_-' + b_2 \wh\delta_+'
\end{equation}
and $\wh B \leftrightarrow B=c \delta(x) + b \delta'(x)$, where
$c=c_1+c_2$ and $b=b_1+b_2$. The following Theorem completely
characterizes the action and the domain of the operators $\wh
H=\wh H_0 +\wh B$.

\begin{theorem} \label{CorrH}
Let $\wh H= \wh H_0 + \wh B$ where $\wh B$ is given by (\ref{B1}). Then

(i) $\wh H \subseteq \wh S^*$;\\

(ii) $\psi \in \DO_{max}(\wh H)$ iff $\psi \in \H^2(\RE_-) \oplus
\H^2(\RE_+)$ and
\begin{equation} \label{BC1}
\left[ \begin{array}{cccc}
-c_1 & -c_2 & (b_1-1) & (b_2+1)  \\
(b_1+1) & (b_2-1) & 0 & 0
\end{array} \right] \left[ \begin{array}{l}
\psi_-(0) \\ \psi_+(0) \\ \psi_-'(0) \\ \psi_+'(0)
\end{array}
\right]=0
\end{equation}
where we wrote, as usual, $\psi=\chi_{\RE_-} \psi_- + \chi_{\RE_+}
\psi_+$, $\psi_\pm \in \H^2$.\\

\end{theorem}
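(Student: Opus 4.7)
My plan is to derive both parts of the statement as direct applications of Theorem~\ref{TheoA} to the specific boundary operator (\ref{B1}), with the real content being a short, careful computation of $\wh F\psi$ on $\H^2(\RE_-)\oplus\H^2(\RE_+)$.

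First I would verify that $\wh B$ is a boundary operator in the sense of (\ref{BOP}). Each summand $\wh\delta_\pm$, $\wh\delta'_\pm$ lies in $\wh\A^1$ by Definition~\ref{Defdelta+}, and Theorem~2.5 shows that $\psi*B_1$ and $B_2*\psi$ are concentrated at $x=0$: the continuous parts of $B_1$ and $B_2$ vanish, so in the product formula of Theorem~2.5 only the delta terms at $x=0$ survive. Hence $\wh B\in\wh\A^1\subset\wh\B$, and Theorem~\ref{TheoA} immediately gives part~(i) together with
$$
\DO_{\max}(\wh H)=\mbox{Ker}\,\wh F \,\cap\,\bigl(\H^2(\RE_-)\oplus\H^2(\RE_+)\bigr),\qquad \wh F=-2\wh\beta D_x-\wh\beta'+\wh B.
$$

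To finish part~(ii), I would rewrite $\wh F\psi=0$ explicitly for $\psi=\chi_{\RE_-}\psi_-+\chi_{\RE_+}\psi_+$ with $\psi_\pm\in\H^2$. Substituting the formulas from Section~3, namely $\wh\delta_\pm\psi=\delta(x)\psi_\pm(0)$, $\wh\delta'_\pm\psi=\delta'(x)\psi_\pm(0)-\delta(x)\psi'_\pm(0)$, $\wh\delta_\pm D_x\psi=\delta(x)\psi'_\pm(0)$, together with $\wh\beta=\wh\delta_+-\wh\delta_-$ and $\wh\beta'=\wh\delta'_+-\wh\delta'_-$, collapses $\wh F\psi$ into $\alpha\,\delta(x)+\gamma\,\delta'(x)$ for two linear combinations $\alpha,\gamma$ of the trace data $\psi_\pm(0),\psi'_\pm(0)$. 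I expect the $\delta'$-coefficient to come out as $(b_1+1)\psi_-(0)+(b_2-1)\psi_+(0)$, and (up to an overall minus sign from passing $-2\wh\beta D_x-\wh\beta'$ to the right-hand side) the $\delta$-coefficient as $-c_1\psi_-(0)-c_2\psi_+(0)+(b_1-1)\psi'_-(0)+(b_2+1)\psi'_+(0)$.

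Since $\delta(x)$ and $\delta'(x)$ are linearly independent in $\D'$, the equation $\wh F\psi=0$ is then equivalent to the simultaneous vanishing of both coefficients, which is precisely the two rows of (\ref{BC1}). I do not anticipate any conceptual obstacle; the only point requiring care is the sign bookkeeping in the substitution, since sign errors would misalign the rows of (\ref{BC1}). With all key tools (Theorem~\ref{TheoA}, the explicit formulas at the end of Section~3, and linear independence of $\delta$ and $\delta'$) already in place, the proof reduces to this direct verification.
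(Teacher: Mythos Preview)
Your proposal is correct and follows essentially the same route as the paper: apply Theorem~\ref{TheoA} to reduce to $\mbox{Ker}\,\wh F$, expand $\wh F\psi$ on $\H^2(\RE_-)\oplus\H^2(\RE_+)$, and read off the two boundary conditions from the coefficients of $\delta$ and $\delta'$. The only cosmetic difference is that the paper carries $\delta'(x)\cdot\psi_\pm$ unexpanded and then uses the identity $\delta'(x)\psi_\pm = D_x(\delta(x)\psi_\pm)-\delta(x)\psi'_\pm$ to split into a $\delta$ and a $D_x\delta$ part, whereas you substitute the already-evaluated formulas from Section~3; the resulting linear system is identical.
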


\begin{proof}
Since supp $\wh B \psi \subseteq \{0\}$ for all $\psi \in \DO(\wh
B) =\A^1$, it follows from Theorem \ref{TheoA} that $\wh H
\subseteq \wh S^*$. Moreover, also from Theorem \ref{TheoA},
$$
\DO_{max}(\wh H)= \,\, \mbox{Ker} \, \,  \wh F \, \cap \,
\left(\H^2(\RE_-) \oplus \H^2(\RE_+) \right)
$$
where $\wh F$ is given by
(\ref{OperatorF}):
\begin{eqnarray}
\wh F \psi & = & -2 \wh\beta D_x \psi- \wh\beta' \psi+ \wh B \psi\nonumber \\
&=& \delta (x) \left[ c_1 \psi_- +c_2 \psi_+ + 2 \psi_-' - 2 \psi_+' \right] + \delta' (x) \left[ (b_1+1) \psi_-
+(b_2-1) \psi_+ \right] \, .\nonumber
\end{eqnarray}
Since $\delta'(x) \psi_\pm = D_x (\delta(x) \psi_\pm) -\delta(x) \psi'_\pm$, we easily get:
\begin{eqnarray}
\wh F \psi &=& \delta (x) \left[ c_1 \psi_- +c_2 \psi_+ -(b_1-1) \psi_-' - (b_2+1) \psi_+' \right] \nonumber \\
& & + D_x \left[\delta (x) \left( (b_1+1) \psi_- +(b_2-1) \psi_+ \right) \right] \nonumber
\end{eqnarray}
and so
$$
\wh F \psi =0 \Longleftrightarrow \left\{
\begin{array}{l}
c_1 \psi_-(0) + c_2\psi_+(0) -(b_1-1) \psi_-'(0) -(b_2+1) \psi_+'(0)=0 \\
(b_1+1) \psi_-(0) +(b_2-1) \psi_+(0)=0
\end{array} \right.
$$
which is equivalent to the condition (\ref{BC1}).

\end{proof}

A natural question is then which operators $\wh H= \wh H_0 + \wh
B$ are s.a., and conversely, which s.a. restrictions of $\wh S^*$
admit a boundary potential representation $\wh H_0+ \wh B$ with
$\wh B$ of the form (\ref{B1}). We start by recalling that $\wh Z
\subseteq \wh S^*$ is s.a. iff $\DO(\wh Z) \subset \H^2(\RE_-)
\oplus \H^2(\RE_+)$ is characterized by two {\it separating}
boundary conditions (\ref{SBC}):
\begin{equation} \label{BC2}
\left[ \begin{array}{cccc}
b_- & 0 & -a_- & 0  \\
0 & b_+ & 0 & -a_+
\end{array} \right] \left[ \begin{array}{l}
\psi_-(0) \\ \psi_+(0) \\ \psi_-'(0) \\ \psi_+'(0)
\end{array}
\right]=0 \quad ; \quad (a_\pm,b_\pm) \in \RE^2 \backslash
\{(0,0)\}
\end{equation}
in which case the operator is denoted by $\wh L^S$ or, more
explicitly, by $\wh L^S_{(a_-,a_+,b_-,b_+)}$. Alternatively, $\wh
Z$ might satisfy two {\it interacting} boundary conditions of the
form (\ref{IBC}):
\begin{equation} \label{BC3}
\left[ \begin{array}{cccc}
-c & -c & (b-1) & (b+1)  \\
(\ol b+1) & (\ol b-1) & a & a
\end{array} \right] \left[ \begin{array}{l}
\psi_-(0) \\ \psi_+(0) \\ \psi_-'(0) \\ \psi_+'(0)
\end{array}
\right]=0
\end{equation}
$$
a,c \in \RE \quad , \quad b \in \CO  \, : \, (\ol b+1)(1-b)-ac
\not=0
$$
in which case the operator $\wh Z$ is denoted by $\wh L^I$ or,
more explicitly, by $\wh L^I_{(a,b,c)}$.

The two following theorems study the relation between the
operators $\wh H = \wh H_0 + \wh B$ and the s.a. operators $\wh
L^I$ and $\wh L^S$.

\begin{theorem}\label{TheoIn}
(1) Let $\wh H=\wh H_0 + \wh B$ where $\wh B$ is given by
(\ref{B1}). For arbitrary $(c_1,c_2,b_1,b_2) \in \CO^4$, the
operator $\wh H$ is an {\it interacting} s.a. Schr\"odinger
operator $\wh L^I_{(a,b,c)} \subseteq \wh S^*$
iff either (1a) or (1b) holds true:\\
\\
(1a) $b_1= \ol b_2$, $b_1 \not=\pm 1$ and Im $\left[ c_1(\ol b_1-1) -c_2(b_1+1) \right] =0$. In this case, $\wh
H=\wh L^I_{(a,b,c)}$ with:
\begin{equation}\label{ABC1}
a=0 \quad , \quad b= \frac{b_1+\ol b_1}{\ol b_1-b_1 +2} \quad , \quad c= \frac{2c_1(\ol b_1-1)
-2c_2(b_1+1)}{(\ol b_1-b_1)^2 -4} \, .
\end{equation}

Alternatively:\\
\\
(1b) $b_1+b_2=0$, $b_1 \not=\pm 1$, and Im $\left[ \frac{c_1+c_2}{2(1-b_1)} \right] =0$. In this case $\wh H=\wh
L^I_{(a,b,c)}$ with:
\begin{equation}\label{ABC2}
a=0 \qquad , \qquad b=0 \qquad , \qquad c= \frac{c_1 +c_2}{2(1-b_1)} \, .
\end{equation}
\\
(2) Conversely, $\wh L^I_{(a,b,c)}$ admits a boundary potential representation of the form $\wh H_0 + \wh B$,
with $\wh B$ given
by (\ref{B1}), iff one of the following holds true:\\
\\
(2a) $a=0$, $b+\ol b \not=0$ and $b \not=\pm 1$. Then $\wh B$ has
parameters satisfying the conditions:
\begin{equation} \label{abc2}
b_1= \frac{2b \ol b + b - \ol b}{b + \ol b} \quad , \quad  b_2=\ol b_1
\end{equation}
and
\begin{equation}\label{abc4}
(c_1,c_2)=(k_1/X_1,k_2/X_2) \quad , \quad k_1,k_2 \in \CO : \, k_1+k_2 =c
\end{equation}
where
\begin{equation}\label{abc5}
X_1=-\frac{b + \ol b}{4} + \frac{b + \ol b}{4 \ol b} \quad , \quad X_2 = \frac{b + \ol b}{4} + \frac{b + \ol
b}{4 \ol b} \, .
\end{equation}
\\
(2b) $a,b=0$. In this case the parameters $(c_1,c_2,b_1,b_2) \in \CO^4$ satisfy the conditions:
\begin{equation*}
%\label{abc1}
b_1 \in \CO \backslash \{-1,1\} \quad , \quad  b_2=-b_1 \quad , \quad  c_1+c_2=2 c (1-b_1) \, .
\end{equation*}

\end{theorem}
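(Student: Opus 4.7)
The plan is to translate the identity $\wh H = \wh L^I_{(a,b,c)}$ into an equality of kernels of two $2\times 4$ matrices acting on the boundary vector $\left(\psi_-(0),\psi_+(0),\psi_-'(0),\psi_+'(0)\right)^T$, and then carry out the ensuing linear algebra. By Theorem~\ref{CorrH}, $\DO_{\max}(\wh H_0+\wh B)$ is cut out of $\H^2(\RE_-)\oplus\H^2(\RE_+)$ by the matrix $M_H$ of~(\ref{BC1}), while the domain of $\wh L^I_{(a,b,c)}$ is cut out by the matrix $M_L$ of~(\ref{BC3}). Hence $\wh H = \wh L^I_{(a,b,c)}$ if and only if both matrices have rank~$2$ and the same row space, i.e.\ $M_L = T M_H$ for some invertible $2\times 2$ matrix $T$; otherwise $\wh H$ is not even a s.a.\ restriction of $\wh S^*$.

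The second row of $M_H$, namely $(b_1+1,\,b_2-1,\,0,\,0)$, has vanishing derivative entries. A combination $\lambda_1\,\mbox{row}_1 M_L + \lambda_2\,\mbox{row}_2 M_L$ has derivative columns $\bigl(\lambda_1(b-1)+\lambda_2 a,\,\lambda_1(b+1)+\lambda_2 a\bigr)$; vanishing of both forces $\lambda_1=0$ and $\lambda_2 a=0$. To reproduce a \emph{nonzero} second row of $M_H$ we therefore need $a=0$; the degenerate alternative $(b_1,b_2)=(-1,1)$ reduces the rank of $M_H$ below~$2$ and is incompatible with self-adjointness. With $a=0$ in place, proportionality of the two second rows yields $\ol b(b_1-b_2+2) = b_1+b_2$, and demanding that this be the complex conjugate of the symmetric relation emerging from the first-row matching forces $b_1+b_2 \in \RE$. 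This splits into two cases: (i)~$b_1+b_2 \neq 0$, which gives $\mathrm{Re}(b_1)=\mathrm{Re}(b_2)$ and $\mathrm{Im}(b_1)=-\mathrm{Im}(b_2)$, i.e.\ $b_2 = \ol b_1$ with $b=(b_1+\ol b_1)/(\ol b_1-b_1+2)$ (case~(1a)); and (ii)~$b_1+b_2=0$, forcing $b=0$ (case~(1b)). The excluded values $b_1=\pm 1$ yield $b=\pm 1$, violating the interacting self-adjointness condition $(1+\ol b)(1-b)\neq 0$.

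Once the second rows are aligned, writing $\mbox{row}_1 M_H = \alpha\,\mbox{row}_1 M_L + \beta\,\mbox{row}_2 M_L$ and matching the derivative columns yields $\alpha = (2+b_2-b_1)/2$; matching the first two columns then gives $\beta = (c_2-c_1)/2$ together with the identity $2\alpha c = c_1(1-\ol b) + c_2(1+\ol b)$. Substituting case~(1a) data $b_2 = \ol b_1$ (so $\alpha\mu$ with $\mu=(2+b_1-\ol b_1)/2$ satisfies $4\alpha\mu = 4-(\ol b_1-b_1)^2$, and $1\mp\ol b$ reduce to $(1\mp\ol b_1)/\mu$ or $(1\pm b_1)/\mu$) reproduces~(\ref{ABC1}); case~(1b) data $b_2=-b_1$ gives~(\ref{ABC2}) directly. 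Self-adjointness of $\wh L^I_{(0,b,c)}$ requires $c\in\RE$; since the explicit denominators are real, this is equivalent to the stated imaginary-part identities. This finishes~(1).

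Part~(2) follows by inverting this correspondence. In case~(2a), $a=0$, $b+\ol b\neq 0$ and $b\neq\pm 1$: solving $b=\mathrm{Re}(b_1)/(1-i\,\mathrm{Im}(b_1))$ for $b_1$ yields~(\ref{abc2}), and the identity for $c$ rewrites as $k_1+k_2=c$ after the change of variables $(k_1,k_2)=(c_1X_1,c_2X_2)$ with $X_1,X_2$ as in~(\ref{abc5}). In case~(2b), $a=b=0$, the parameter $b_1\in\CO\setminus\{-1,1\}$ remains free, $b_2=-b_1$, and only $c_1+c_2=2c(1-b_1)$ is required. In either case, the residual $\beta$-freedom exhibits the non-uniqueness of the $\wh B$ representation of a fixed $\wh L^I$. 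The main technical subtlety will be verifying that the ``$\mathrm{Im}[\cdots]=0$'' conditions correspond \emph{exactly} to $c\in\RE$ without introducing hidden additional real constraints; the remainder is mechanical matrix algebra.
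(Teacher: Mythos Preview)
Your proposal is correct and follows essentially the same route as the paper: both arguments reduce $\wh H=\wh L^I_{(a,b,c)}$ to the requirement that the $2\times 4$ matrices in (\ref{BC1}) and (\ref{BC3}) have the same row space, then force $a=0$ by comparing the derivative columns, and split into the two cases $b_2=\ol b_1$ and $b_2=-b_1$ via the resulting proportionality relations; the formulas for $b$ and $c$ and the inversion for part~(2) come out of the same linear algebra. The only cosmetic difference is that the paper writes the rows of $M_L$ as combinations of the rows of $M_H$ (its system (S1)), whereas you mostly go in the opposite direction; since both matrices have rank~$2$ this is immaterial. One small imprecision: your remark that ``the explicit denominators are real'' literally justifies only the (1a) imaginary-part condition, since in (1b) the denominator $2(1-b_1)$ need not be real --- but there the stated condition is $\mathrm{Im}\,c=0$ itself, so nothing is lost.
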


\begin{proof}
The operators $\wh L^I$ and $\wh H=\wh H_0+\wh B$ are both
restrictions of $\wh S^*$. Hence, $\wh L^I=\wh H$ iff their
domains are the same. This is true iff the boundary conditions
(\ref{BC1}) and (\ref{BC3}) are equivalent.

Since the two equations in (\ref{BC3}) are linearly independent,
the two sets of boundary conditions (\ref{BC1}) and (\ref{BC3})
are equivalent iff exists $\la_1, \la_2, \mu_1,\mu_2 \in \CO$ such
that
\begin{equation} \label{S1}
\left\{ \begin{array}{l} \la_1 (-c_1,-c_2,b_1-1,b_2+1)+ \mu_1 (b_1+1,b_2-1,0,0)  =  (-c,-c,b-1,b+1) \\
\\
\la_2 (-c_1,-c_2,b_1-1,b_2+1)+ \mu_2 (b_1+1,b_2-1,0,0)  =  (\ol b+1,\ol b-1,a,a)
\end{array} \right.
\end{equation}

From the second equation, if $a \not=0$ then $b_1-1=b_2+1$. Using
the first equation this implies that $b-1=b+1$, which is not
possible. Hence, $a=0$ and consequently $\la_2=0$. From the
condition on the parameters $a,b,c$ (cf. eq.(\ref{BC3})), we also
have:
\begin{equation*}
%\label{S1.1}
(1+\ol b)(1-b) -ac \not=0 \Longrightarrow b \not=\pm 1 \, .
\end{equation*}

The system (\ref{S1}) then reduces to the simpler systems:
\begin{equation} \label{S2}
\left\{ \begin{array}{lll} \la_1 c_1- \mu_1 (b_1+1) & = & c \\
\la_1 c_2- \mu_1 (b_2-1) & = & c
\end{array} \right.
\end{equation}
and
\begin{equation*}
%\label{S3}
\left\{ \begin{array}{lll} \la_1 (b_1-1) & = & b-1 \\
\la_1 (b_2+1) & = & b+1
\end{array} \right. \qquad \wedge \qquad
\left\{ \begin{array}{lll} \mu_2 (b_1+1) & = & \ol b+1 \\
\mu_2 (b_2-1) & = & \ol b-1
\end{array} \right.
\end{equation*}
Since $b\not=\pm 1$, the two latter systems imply
$$
\la_1, \mu_2 \not=0 \quad , \quad b_1,b_2 \not=\pm 1 \, .
$$
Adding and subtracting the two equations in the two latter
systems:
\begin{equation} \label{S4}
\left\{ \begin{array}{lll} b &=& {1 \over 2} \la_1 (b_1+b_2) \\
\ol b &=& {1\over 2} \mu_2 (b_1+b_2)
\end{array} \right. \qquad \wedge \qquad
\left\{ \begin{array}{lll} \la_1 (b_2-b_1 +2) & = & 2 \\
\mu_2 (b_2-b_1 -2) & = & -2
\end{array} \right.
\end{equation}
Hence, $b_2-b_1 \not= \pm 2 $ and
\begin{equation} \label{S5}
b= \frac{b_1+b_2}{b_2-b_1 +2} \qquad , \qquad \ol b = -\frac{b_1+b_2}{b_2-b_1 -2} \, .
\end{equation}
Then
$$
\frac{b_1+b_2}{b_2-b_1 +2} = -\frac{\ol b_1+\ol b_2}{\ol b_2-\ol
b_1 -2} \quad \Longleftrightarrow \quad
\left\{ \begin{array}{lll} b_2 \ol b_2 & = & b_1 \ol b_1  \\
\ol b_1+ \ol b_2 & = & b_1+b_2
\end{array} \right.
$$
and so:
\begin{equation} \label{S6}
b_1=\ol b_2 \qquad \vee \qquad b_1=-b_2 \, .
\end{equation}
In the first case, from (\ref{S5}) and (\ref{S4}):
\begin{equation} \label{S7}
b= \frac{b_1+\ol b_1}{\ol b_1-b_1 +2} \quad , \quad \la_1= \frac{2}{\ol b_1-b_1 +2} \quad , \quad \mu_2=
\frac{-2}{\ol b_1-b_1 -2}
\end{equation}
and in the second:
\begin{equation} \label{S8}
b= 0 \quad , \quad \la_1= \frac{1}{-b_1+1} \quad , \quad \mu_2=
\frac{1}{b_1+1} \, .
\end{equation}
The previous formulas (\ref{S6}), (\ref{S7}) and (\ref{S8}) almost complete the proof of the statements (1a) and
(1b). It remains only to proof that, in the two cases (\ref{S6}), for arbitrary $(c_1,c_2)$ there exists $\mu_1$
and $c$ such that (\ref{S2}) holds.

We start by considering the first case $b_1= \ol b_2$. Subtracting
the two equations in (\ref{S2}), and taking into account
(\ref{S7}), we get:
$$
\mu_1= \frac{2(c_2-c_1)}{(\ol b_1-b_1)^2 -4} \, .
$$
Replacing this result in (\ref{S2}), we find
\begin{equation*}
%\label{S9}
c= \frac{2c_1(\ol b_1-1) -2c_2(b_1+1)}{(\ol b_1-b_1)^2 -4} \, .
\end{equation*}
Since $c$ is real, the triple $(c_1,c_2,b_1)$ has to satisfy the condition
\begin{equation*}
%\label{S10}
Im \left[ c_1(\ol b_1-1) -c_2(b_1+1) \right] =0 \, .
\end{equation*}
This concludes the proof of statement (1a) in the Theorem.

We proceed by considering the second case in (\ref{S6}):
$b_1+b_2=0$ and $b_1 \not= \pm 1$. Adding and subtracting the two
equations in (\ref{S2}), and taking into account (\ref{S8}), we
find
\begin{equation*}
%\label{S11}
\mu_1= \frac{c_2-c_1}{2(b_1^2-1)} \quad , \quad c= \frac{c_1 +c_2}{2(1-b_1)}
\end{equation*}
and, since $c$ is real, $c_1, c_2$ should satisfy the condition
\begin{equation*}
%\label{S12}
Im \left[ \frac{c_1 +c_2}{2(1-b_1)} \right] =0 \, .
\end{equation*}
This concludes the proof of the statement (1b) of the Theorem.\\

It remains to prove the statement (2). This basically amounts to invert the equations (\ref{ABC1}) and
(\ref{ABC2}) (in the two cases (1a) and (1b), respectively) and determine for which values of $(a,b,c)$ is this
possible.

We start by considering the case (1a): $b_1=\ol b_2$, $b_1
\not=\pm 1$. It follows from (\ref{S5}) that
$$
(\ol b_1-b_1+2)b = (b_1- \ol b_1+2)\ol b \, \Longleftrightarrow \, (\ol b_1-b_1)(b + \ol b)=2(\ol b - b) \, .
$$
Then we have three possibilities:

(i) If $b + \ol b=0$ then also $\ol b-b=0$ and so $b=0$. Hence,
$b$ cannot be pure imaginary number.

(ii) If $b=0$ then, from (\ref{S4}), it follows that $b_1+\ol b_1=0$, and so $b_1$ is an arbitrary imaginary
number. Hence, $b_2= \ol b_1 = -b_1$, and this is the case (1a), which will be consider below.

(iii) Finally, if $b +\ol b \not=0$ then $\ol b_1-b_1= 2{\ol b -b
\over b + \ol b}$. Substituting in (\ref{S7}) (which is valid in
the first case $b_1 = \ol b_2$), we get $\ol b_1+b_1= 4{\ol b b
\over b + \ol b}$. Then
\begin{equation}\label{S14}
b_1= \frac{2b \ol b + b - \ol b}{b + \ol b}
\end{equation}
which proves (\ref{abc2}).

We then consider the equation for $c$ in (\ref{ABC1}). Using (\ref{S14}), we get
\begin{equation} \label{S15}
c  =  \frac{b + \ol b}{2} \left[\frac{c_2 - c_1}{2} + \frac{c_1 +
c_2}{2 \ol b} \right] = c_1 X_1 + c_2 X_2
\end{equation}
where $X_1,X_2$ are given by (\ref{abc5}). Since $b + \ol b
\not=0$ and $b \not=\pm 1$, then also $X_1,X_2 \not=0$. Hence,
given $b$ and $c$, we immediately realize that the solutions
$(c_1,c_2)$ of (\ref{S15}) can be written in the form
(\ref{abc4}), which concludes the proof of (2a).

We proceed with the study of the case (1b): $b_1+b_2=0$, $b_1
\not=\pm 1$. This always yields $b=0$. It follows immediately from
(\ref{ABC2}) that, for an arbitrary $c \in \RE$, the operator $\wh
L^I_{(0,0,c)}$ admits a boundary potential representation with
$b_1$ an arbitrary number in $\CO \backslash \{-1,1\}$, $b_2=-b_1$
and $c_1,c_2 \in \CO$ such that:
\begin{equation}
%\label{S13}
c_1+c_2=2 c (1-b_1) \, .
\end{equation}
This proves (2b).

\end{proof}

If the parameters of the boundary potential $\wh B$ are real, the
previous result considerably simplifies:

\begin{corollary} \label{CorrInt}
Let $\wh H=\wh H_0 + \wh B$ where $\wh B$ is given by (\ref{B1})
with $(c_1,c_2,b_1,b_2) \in \RE^4$. Then $\wh H =\wh
L^I_{(a,b,c)}$ for some $a,c \in \RE$ and $b \in \CO$ iff:
$$
b_1 = \pm b_2 \quad , \quad b_1 \not= \pm 1 \quad , \quad a=0
\quad , \quad b=\tfrac{b_1+b_2}{2}
$$
In addition:

(i) If $b_1=b_2$ then $c=\tfrac{c_1(1-b_1) +c_2(1+b_1)}{2}$,

(ii) If $b_1=-b_2$ then $c=\tfrac{c_1+c_2}{2(1-b_1)}$.

\end{corollary}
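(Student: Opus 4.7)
The plan is to specialize Theorem \ref{TheoIn}(1) to the case of real parameters $(c_1,c_2,b_1,b_2) \in \RE^4$. That theorem gives an iff characterization of when $\wh H=\wh H_0+\wh B$ coincides with an interacting s.a. Schr\"odinger operator $\wh L^I_{(a,b,c)}$, through two scenarios (1a) and (1b). It therefore suffices to rewrite each scenario under the assumption that all parameters are real, and to check that the union of the resulting conditions is precisely what the corollary asserts.

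In case (1a), the condition $b_1 = \ol b_2$ becomes $b_1 = b_2$, the constraint $\text{Im}[c_1(\ol b_1-1) - c_2(b_1+1)] = 0$ becomes automatic, and substituting $\ol b_1 = b_1$ in (\ref{ABC1}) yields $a = 0$, $b = b_1 = (b_1+b_2)/2$, and $c = (c_1(1-b_1) + c_2(1+b_1))/2$. This is item (i) of the corollary. In case (1b), the condition $b_1 + b_2 = 0$ and the formulas $a=b=0$, $c = (c_1+c_2)/(2(1-b_1))$ from (\ref{ABC2}) involve no complex conjugations, and the imaginary-part constraint is again automatic. Since $b_2 = -b_1$, we have $b = 0 = (b_1+b_2)/2$, reproducing item (ii).

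Combining the two, the disjunction ``$b_1 = b_2$ or $b_1 = -b_2$'' is exactly $b_1 = \pm b_2$, while the common conditions $b_1 \ne \pm 1$, $a=0$, $b = (b_1+b_2)/2$ hold in both cases. There is no genuine obstacle beyond this routine bookkeeping of substitutions; the only small subtlety worth checking is that the two scenarios overlap at $b_1 = b_2 = 0$, where (i) and (ii) both give $c = (c_1+c_2)/2$, so the combined statement is unambiguous.
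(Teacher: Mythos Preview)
Your proof is correct and follows exactly the approach the paper takes: the paper's own proof is the single line ``The proof is a direct consequence of (1a) and (1b) in Theorem \ref{TheoIn},'' and your argument is precisely the specialization of those two cases to real parameters, with the routine substitutions worked out explicitly.
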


\begin{proof}
The proof is a direct consequence of (1a) and (1b) in Theorem
\ref{TheoIn}.

\end{proof}

We proceed with the separating case:

\begin{theorem}\label{TheoSe}
Let $(a_-,b_-),(a_+,b_+) \in \RE^2 \backslash \{(0,0)\}$. Then $L^S_{(a_-,a_+,b_-,b_+)}= \wh H_0 + \wh B$ for
some boundary potential operator of the form (\ref{B1}) iff one of the following holds true:\\
\\
(1a) $a_-=0$, $a_+ \not=0$, $b_-\not=0$. In this case $b_1=b_2=1$, $c_1$ is arbitrary and $c_2=  2 b_+/a_+ $.
This corresponds to Dirichlet boundary conditions at $x=0^-$ and Robin (or Neumann, if $b_+=0$) boundary conditions at $x=0^+$.\\
\\
(1b) $a_-\not=0$, $a_+ =0$, $b_+ \not= 0$. In this case $b_1=b_2=-1$, $c_1=-2 b_-/a_-$. This corresponds
to Dirichlet boundary conditions at $x=0^+$ and Robin (or Neumann, if $b_-=0$) boundary conditions at $x=0^-$.\\
\\
(1c) $a_-=a_+=0$ and $b_-,b_+ \not=0$. In this case $b_1=1$, $b_2=-1$ and $c_1+c_2 \not=0$. This corresponds to
Dirichlet boundary conditions at $x=0^-$ and $x=0^+$.

\end{theorem}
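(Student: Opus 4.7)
The plan is to proceed exactly as in the proof of Theorem \ref{TheoIn}. Both $\wh L^S_{(a_-,a_+,b_-,b_+)}$ and any $\wh H_0 + \wh B$ with $\wh B$ of the form (\ref{B1}) are restrictions of $\wh S^*$, so they agree iff their domains coincide. By Theorem \ref{CorrH} the domain of $\wh H_0 + \wh B$ is cut out of $\H^2(\RE_-) \oplus \H^2(\RE_+)$ by (\ref{BC1}), while $\DO(\wh L^S)$ is cut out by (\ref{BC2}); since (\ref{BC2}) has rank two, the problem reduces to finding when (\ref{BC1}) has the same row space in $\CO^4$ as (\ref{BC2}). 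I would encode this via scalars $\la_i,\mu_i \in \CO$, $i=1,2$, such that $\la_i(-c_1,-c_2,b_1-1,b_2+1)+\mu_i(b_1+1,b_2-1,0,0)$ coincides with the $i$-th row of (\ref{BC2}). The third and fourth columns give the master relations $\la_1(b_1-1)=-a_-$, $\la_1(b_2+1)=0$, $\la_2(b_1-1)=0$, $\la_2(b_2+1)=-a_+$, which drive the whole case analysis.

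Next, I would split into cases based on which of $a_\pm$ vanish. If both are nonzero, the master relations force $\la_1\neq 0$ (whence $b_2=-1$) and $\la_2\neq 0$ (whence $b_1=1$), so that $\la_1(b_1-1)=0\neq -a_-$, a contradiction; this rules out every separating boundary condition with $a_-a_+\neq 0$, which is precisely the complement of (1a)--(1c), yielding the "only if" direction. For $a_-=0$ and $a_+\neq 0$ (so $b_-\neq 0$ automatically), the master relations force $\la_2\neq 0$, $b_1=1$, $b_2\neq -1$, and $\la_1=0$; the first-row/first-column identity gives $\mu_1=b_-/2\neq 0$, so $\mu_1(b_2-1)=0$ forces $b_2=1$. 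Then, with $\la_2=-a_+/2$, the second-row/second-column identity yields $c_2=2b_+/a_+$, while $c_1$ remains unconstrained, which is case (1a). Case (1b) follows by the same reasoning with the left-right roles interchanged.

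The remaining case $a_-=a_+=0$ (hence $b_-,b_+\neq 0$) is the most delicate because the master relations collapse to $\la_i(b_1-1)=\la_i(b_2+1)=0$, so they no longer determine $b_1,b_2$. If $\la_1=\la_2=0$ then both rows of (\ref{BC1}) are proportional to $(b_1+1,b_2-1,0,0)$, giving a row space of dimension at most one, which cannot match the rank-two (\ref{BC2}); so some $\la_i\neq 0$, forcing $b_1=1,b_2=-1$. Then (\ref{BC1}) reduces to a matrix whose last two columns vanish, and solvability of the first-two-column linear system in $\la_i,\mu_i$ with right-hand sides $(b_-,0)$ and $(0,b_+)$ turns out to be equivalent to $c_1+c_2\neq 0$; under this condition the row space of (\ref{BC1}) fills $\{(x,y,0,0):x,y\in \CO\}$, matching (\ref{BC2}) for every $b_\pm\neq 0$ with no further constraint linking the $c_i$ to the $b_\pm$. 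The main obstacle is the bookkeeping in case (1c): because the last two columns of the $\la_i,\mu_i$-system become vacuous, the non-degeneracy condition $c_1+c_2\neq 0$ must be extracted from the first two columns directly, rather than read off from the master relations as in cases (1a) and (1b).
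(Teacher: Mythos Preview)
Your plan is correct and follows essentially the same route as the paper's proof: set up the linear-combination system (the paper's (\ref{D1})), extract the ``master relations'' from the last two columns (the paper's (\ref{D2})), and run the same three-case analysis on $a_\pm$, with the paper ruling out $\la_1=\la_2=0$ in case (1c) by a direct contradiction on $b_2$ rather than your rank argument. One slip to fix when you write it up: in the $\la_1=\la_2=0$ step you mean that both rows of (\ref{BC2}) would be proportional to $(b_1+1,b_2-1,0,0)$, not both rows of (\ref{BC1}).
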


\begin{proof}
The two sets of boundary conditions (\ref{BC1}) and (\ref{BC2})
are equivalent iff exists $\la_1, \la_2, \mu_1,\mu_2 \in \CO$ such
that:
\begin{equation} \label{D1}
\left\{ \begin{array}{lll} \la_1 (-c_1,-c_2,b_1-1,b_2+1)+ \mu_1 (b_1+1,b_2-1,0,0) & = & (b_-,0,-a_-,0) \\
\la_2 (-c_1,-c_2,b_1-1,b_2+1)+ \mu_2 (b_1+1,b_2-1,0,0) & = & (0,b_+,0,-a_+)
\end{array} \right.
\end{equation}
This implies:

\begin{equation} \label{D2}
\left\{ \begin{array}{lll} \la_1 (b_2+1) & = & 0 \\
\la_1 (b_1-1) & = & -a_-
\end{array} \right. \qquad \wedge \qquad
\left\{ \begin{array}{lll} \la_2 (b_1-1) & = & 0 \\
\la_2 (b_2+1) & = & -a_+
\end{array} \right.
\end{equation}
From these systems it follows that:
$$
a_+ \not= 0 \Longrightarrow b_2+1 \not=0 \Longrightarrow \la_1=0
\Longrightarrow a_-=0
$$
and likewise $a_- \not=0 \Longrightarrow a_+=0$. Hence,
\begin{equation*}
%\label{D3}
a_+=0 \quad \vee \quad a_-=0 \, .
\end{equation*}
We then have three distinct cases:\\
\\
(i) $a_-=0  \wedge a_+ \not=0$. Then also $b_- \not= 0$. From
(\ref{D2}), $\la_2\not=0$, $b_2+1 \not=0$ and so $\la_1=0$ and
$b_1=1$. Hence, from (\ref{D1}),
$$
\mu_1(b_1+1,b_2-1)=(b_-,0)
$$
and since $b_- \not=0$, we have $\mu_1 \not=0$ and so $b_2=1$.
Substituting in (\ref{D2}), we also find $\la_2=-a_+/2$.

From (\ref{D1}), we then have
$$
-\la_2 c_2 + \mu_2 (b_2-1)=b_+ \Longrightarrow \la_2 c_2 = -b_+ \Longrightarrow c_2=2 b_+ /a_+ \, .
$$
This concludes the proof of the case (1a).\\
\\
(ii) $a_-\not=0 \wedge a_+ =0$. This case yields the conditions (1b). The proof follows exactly the same steps as in (i).\\
\\
(iii) $a_+=a_-=0$ and $b_-,b_+ \not=0$. From (\ref{D2}), we get
$\la_1=\la_2=0$ or $(b_1,b_2)=(1,-1)$. If $\la_1=\la_2=0$ then,
from (\ref{D1}),
$$
\mu_1 (b_1+1,b_2-1)=(b_-,0)  \qquad \wedge \qquad \mu_2
(b_2-1)=b_+
$$
and the first equation implies $\mu_1 \not=0$ and $b_2 =1$, while
the second yields $b_2 \not=1$. Hence, $(\la_1,\la_2) \not=(0,0)$
and so $b_1=1$ and $b_2=-1$. Then, from (\ref{D1})
$$
\left\{ \begin{array}{lll} 2\mu_1 - \la_1 c_1 & = & b_- \\
-2 \mu_1 - \la_1 c_2  & = & 0
\end{array} \right.
\quad  \Longrightarrow \quad c_1+c_2 \not=0
$$
which concludes the proof of the case (1c).

\end{proof}

We now determine the sesquilinear form associated with $\wh H$.

\begin{theorem}\label{SQF}
Let $\wh H=\wh H_0+\wh B$, where $\wh B$ is the boundary potential
given by (\ref{B1}). The sesquilinear form associated with $\wh H$
is given in the domain $\DO(h)=\H^2(\RE_-) \oplus \H^2(\RE_+)$ by
\begin{equation}\label{BF1}
h(\psi,\phi)=\left(\psi'_-,\phi_-'\right)_{\L^2(\RE_-)}+\left(\psi'_+,\phi_+'\right)_{\L^2(\RE_+)}+
b_B(\psi,\phi)
\end{equation}
where the boundary term is
\begin{equation}\label{BF3}
b_B(\psi,\phi)=\ol\phi_+(0) \psi_+'(0)-\ol\phi_-(0) \psi_-'(0) \,
.
\end{equation}

If $\wh H$ is s.a. then $h$ can be extended to $\DO(h)=\H^1(\RE_-)
\oplus \H^1(\RE_+)$. In this case, $h$ is also given by
(\ref{BF1}), but the boundary term $b_B$ depends on the particular
operator $\wh H$:\\
\\
(1) If $\wh H$ is a s.a interacting Schr\"odinger operator then
the parameters defining $\wh B$ satisfy (cf. Theorem \ref{TheoIn})
$b_1 \not=\pm 1$ and $b_1 = \ol b_2$ or $b_1+b_2=0$. In this case:
\begin{equation} \label{BFI}
b_B(\psi,\phi)= \left\{ \begin{array}{lll}
\tfrac{c_1}{1-b_1}\ol\phi_-(0) \psi_-(0) + \tfrac{c_2}{1+\ol b_1}
\ol\phi_+(0)
\psi_+(0) & , & b_1= \ol b_2 \\
\\
\tfrac{c_1}{1-b_1}\ol\phi_-(0) \psi_-(0) + \tfrac{c_2}{1- b_1}
\ol\phi_+(0) \psi_+(0) & , & b_1= - b_2
\end{array} \right.
\end{equation}
\\
(2) If $\wh H$ is a s.a. separating Schr\"odinger operator then
the parameters of $\wh B$ satisfy (cf. Theorem \ref{TheoSe})
$b_1=b_2=\pm 1$ or $b_1=-b_2=1$ and $c_1+c_2 \not=0$. In this
case:
\begin{equation} \label{BFS}
b_B(\psi,\phi)= \left\{ \begin{array}{lll}  \tfrac{c_2}{2}
\ol\phi_+(0)
\psi_+(0) & , & b_1= b_2=1 \\
0 & , & b_1=-  b_2= 1 \, , \,  c_1 + c_2 \not=0\\
\tfrac{c_1}{2}\ol\phi_-(0) \psi_-(0)  & , & b_1= b_2=-1
\end{array} \right.
\end{equation}

\end{theorem}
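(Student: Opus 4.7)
The plan is to derive~(\ref{BF1})--(\ref{BF3}) directly from the defining identity $h(\psi,\phi)=(\wh H\psi,\phi)_{\L^2}$ on $\DO(\wh H)\times\bigl(\H^2(\RE_-)\oplus\H^2(\RE_+)\bigr)$, and then to substitute the boundary conditions satisfied by elements of $\DO(\wh H)$ to rewrite the boundary term in each s.a.\ subcase. By Theorem~\ref{CorrH}(i) and Theorem~\ref{TheoS}, for $\psi\in\DO(\wh H)$ we have $\wh H\psi=\wh S^*\psi=-\chi_{\RE_-}\psi''_- -\chi_{\RE_+}\psi''_+\in\L^2$. Splitting $(\wh H\psi,\phi)_{\L^2}$ at $x=0$ and integrating by parts once on each half line produces the two $\L^2(\RE_\pm)$ inner products of derivatives in~(\ref{BF1}); the terms at $\pm\infty$ vanish by Sobolev decay, and the terms at $x=0$ assemble into $\ol\phi_+(0)\psi'_+(0)-\ol\phi_-(0)\psi'_-(0)$, which is precisely~(\ref{BF3}).

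For the s.a.\ case, the strategy is to invoke Theorem~\ref{CorrH}(ii): $\psi\in\DO_\max(\wh H)$ iff $\psi\in\H^2(\RE_-)\oplus\H^2(\RE_+)$ and the two scalar equations~(\ref{BC1}) are satisfied, and likewise for $\phi$. Proceeding case by case through the s.a.\ subclasses listed in Theorems~\ref{TheoIn} and~\ref{TheoSe}, I would use these conditions to eliminate the derivative traces $\psi'_\pm(0)$ from~(\ref{BF3}) in favour of the function traces $\psi_\pm(0),\phi_\pm(0)$. In the interacting subcase $b_1=\ol b_2$, $b_1\neq\pm 1$: the second row of~(\ref{BC1}) gives $\psi_+(0)=\tfrac{b_1+1}{1-\ol b_1}\psi_-(0)$, and the conjugate of the same relation applied to $\phi$ gives $\ol\phi_+(0)=\tfrac{\ol b_1+1}{1-b_1}\ol\phi_-(0)$; inserting both into~(\ref{BF3}) yields $\tfrac{\ol\phi_-(0)}{1-b_1}\bigl[(\ol b_1+1)\psi'_+(0)+(b_1-1)\psi'_-(0)\bigr]$, and the bracketed combination equals $c_1\psi_-(0)+c_2\psi_+(0)$ by the first row of~(\ref{BC1}). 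Reintroducing $\psi_+(0)$ via the relation above produces the first formula in~(\ref{BFI}). The subcase $b_1=-b_2$, $b_1\neq\pm 1$, is completely analogous: the second row of~(\ref{BC1}) collapses to $\psi_-(0)=\psi_+(0)$ (and $\ol\phi_-(0)=\ol\phi_+(0)$), and the first row then produces the second formula in~(\ref{BFI}).

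The separating subcases of Theorem~\ref{TheoSe} are simpler because~(\ref{BC1}) enforces full Dirichlet conditions on one or both sides of $x=0$. For $b_1=b_2=1$ one reads off $\psi_-(0)=0$ and the Robin relation $\psi'_+(0)=(c_2/2)\psi_+(0)$; assuming the analogous conditions for $\phi$, the term $\ol\phi_-(0)\psi'_-(0)$ in~(\ref{BF3}) drops and the remaining term yields~(\ref{BFS}) directly. The case $b_1=b_2=-1$ is mirror symmetric, while $b_1=-b_2=1$ gives Dirichlet at both sides, so both contributions in~(\ref{BF3}) vanish and $b_B=0$. In each case the resulting bilinear expression depends only on the traces $\psi_\pm(0),\phi_\pm(0)$, which are continuous functionals on $\H^1(\RE_\pm)$, so the form extends to $\H^1(\RE_-)\oplus\H^1(\RE_+)$.

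The main obstacle will be the interacting case. The system~(\ref{BC1}) imposes only a single scalar condition on the pair $(\psi'_-(0),\psi'_+(0))$, so the two derivatives cannot be eliminated using the conditions on $\psi$ alone: the elimination must couple the b.c.\ on $\psi$ with the conjugated b.c.\ on $\phi$, and the algebra has to be organised so that the specific denominators $1-b_1$ and $1+\ol b_1$ (respectively $1-b_1$ in the second subcase) emerge exactly as in~(\ref{BFI}). This is pure bookkeeping, but it does require that the s.a.\ constraints on $(c_1,c_2,b_1,b_2)$ recorded in Theorem~\ref{TheoIn} be invoked at the appropriate stage in order to ensure that the final coefficients are real and the form is Hermitian.
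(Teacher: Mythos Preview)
Your proposal is correct and follows essentially the same route as the paper's proof: integrate by parts to obtain (\ref{BF1})--(\ref{BF3}), then use the second row of (\ref{BC1}) on $\phi$ to replace $\ol\phi_+(0)$ by $\ol\phi_-(0)$, the first row on $\psi$ to trade the derivative combination for $c_1\psi_-(0)+c_2\psi_+(0)$, and finally the second row again to split the result into the two summands of (\ref{BFI}). One small slip: in the line ``Reintroducing $\psi_+(0)$ via the relation above'' you actually need to reintroduce $\ol\phi_+(0)$ (not $\psi_+(0)$) to obtain the denominator $1+\ol b_1$ in the second term; with that correction the argument matches the paper exactly.
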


\begin{proof}

The sesquilinear form generated by $\wh H$ is by definition
$$
h(\psi,\phi)= \left(\wh H\psi,\phi\right)_{\L^2} \quad , \quad
\DO(h)=\DO_\max(\wh H)
$$
where $\left(\, ,\, \right)_{\L^2}$ is the standard inner product
in $\L^2$. Since $\wh H \subseteq \wh S^*$
$$
h(\psi,\phi)=\left(\wh
S^*\psi,\phi\right)_{\L^2}=-\left(\psi''_-,\phi_-\right)_{\L^2(\RE_-)}-\left(\psi''_+,\phi_+\right)_{\L^2(\RE_+)}
$$
for all $\psi,\phi \in \DO_\max(\wh H)$. Integrating by parts, we
immediately obtain (\ref{BF1}) and (\ref{BF3}). This expression is
well-defined in $D(h)=\H^2(\RE_-) \oplus \H^2(\RE_+)$.

We now consider the case where $\wh H$ is a s.a interacting
Schr\"odinger operator. In this case, the parameters in $\wh B$
satisfy (cf. Theorem \ref{TheoIn}) $b \not= \pm 1$. From
(\ref{BF3}) and using the boundary conditions (\ref{BC1}), we get:
\begin{equation} \label{BF2}
b_B(\psi,\phi)=\frac{\ol\phi_-(0)}{1-\ol b_2} \left( (1+ \ol b_1)
\psi_+'(0) +(\ol b_2 -1)  \psi_-'(0) \right) \, .
\end{equation}
We now set $b_1= \ol b_2$ (the case (1a) in Theorem \ref{TheoIn}).
Using again the boundary conditions (\ref{BC1}), we obtain from
(\ref{BF2})
$$
b_B(\psi,\phi)=\frac{\ol\phi_-(0)}{1-b_1} \left( c_1 \psi_-(0)
+c_2   \psi_+(0) \right) \, .
$$
Finally, using again (\ref{BC1}), we obtain the first formula in
(\ref{BFI}).

The other case in Theorem \ref{TheoIn} is $b_1=-b_2$. Substituting
in (\ref{BF2}) and proceeding as in the first case, we obtain the
second formula in (\ref{BFI}).

Finally, we consider the case where $\wh H$ is a separating
Schr\"odinger operator. Then the parameters in $\wh B$ satisfy
(cf. Theorem \ref{TheoSe}) $b_1=b_2=\pm 1$ or $b_1=-b_2=1$ and
$c_1+c_2 \not=0$.

If $b_1=b_2=1$ then from Theorem \ref{TheoSe} we have Dirichlet
boundary conditions at $x=0^-$, and Robin boundary conditions at
$x=0^+$ (the latter are of the form $\psi'_+(0)=c_2 \psi_+(0)/2$).
Substituting these conditions in (\ref{BF3}), we obtain the first
formula in (\ref{BFS}). The other two cases in (\ref{BFS}) are
proved exactly in the same way.

\end{proof}

\end{subsection}

\begin{subsection}{The $\delta$ and $\delta'$ potentials: Norm
resolvent versus boundary operator formulations}

We now consider the case of a $\delta$ and a $\delta'$ potential
operator and a $\delta'$ pseudo potential operator and determine
the explicit form of the corresponding Schr\"odinger operators
(Corollaries \ref{CorrD}, \ref{CorrD'1} and \ref{CorrD'2}). We
will see that these Schr\"odinger operators are closely related to
the norm resolvent limit operators (\ref{K}) and (\ref{K'}).

Let $B=b \delta'$. A family of boundary potential operators associated with $B$ is
\begin{equation}\label{Delta'1}
\widehat B_{\delta',1} \psi = c \delta'(x) * \psi + d \psi *
\delta'(x)= (c \widehat\delta'_+ +d \widehat\delta'_-) \psi \quad
, \quad c+d=b
\end{equation}
which is well-defined for all $\psi \in \A^1$ and satisfies
$$
\widehat B_{\delta',1} \psi = b\delta'(x) \cdot \psi \quad , \quad
\psi \in \C^1.
$$
Hence, the operators $\widehat B_{\delta',1}$ are extensions of
$b\delta'(x) \cdot$ to the space $\A^1 \supset \C^1$. As usual,
there are many possible extensions. A more general family of
boundary operators associated with $ B=(c+d) \delta'(x)$ is
\begin{equation}\label{Delta'2}
\widehat B_{\delta',2}  =c \widehat\delta'_+  + d
\widehat\delta'_- + eD_x (\widehat\delta_+ -\widehat\delta_- ) + f
(\widehat\delta_+ -\widehat\delta_- ) D_x \quad , \quad c,d,e,f
\in \RE \, .
\end{equation}
These are boundary {\it pseudo potential} operators, all of which
are also well-defined on $\A^1$ and satisfy $\widehat
B_{\delta',2} \psi= (c+d) \delta'(x) \cdot \psi$ for all $\psi \in
\C^1$.

Finally, let us introduce the operators
\begin{equation}
\widehat B_{\delta}=c \widehat\delta_+  + d  \widehat\delta_- \, , \label{Delta}
\end{equation}
which are boundary potential operators associated with the
$\delta$-potential $B= (c+d) \delta(x)$.

The boundary operators $\widehat B_\delta=c \widehat\delta_+  + d
\widehat\delta_- $ and $\widehat B_{\delta',1}=c \widehat\delta_+
' + d \widehat\delta_- '$ are particular examples of operators of
the form (\ref{B1}). We then have

\begin{corollary}\label{CorrD}
The operator $\widehat H=-D^2_x + \widehat B_\delta$ is given explicitly by (\ref{K}) with $a=c+d$.
\end{corollary}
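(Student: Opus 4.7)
The plan is to invoke Theorem \ref{CorrH} directly, with the specific parameter choice corresponding to $\widehat B_\delta$. In the general form (\ref{B1}), $\widehat B = c_1\widehat\delta_- + c_2\widehat\delta_+ + b_1\widehat\delta_-' + b_2\widehat\delta_+'$, the operator $\widehat B_\delta = c\widehat\delta_+ + d\widehat\delta_-$ corresponds to $c_1 = d$, $c_2 = c$, $b_1 = b_2 = 0$.

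First, part (i) of Theorem \ref{CorrH} immediately yields $\widehat H \subseteq \widehat S^*$, so $\widehat H\psi = \widehat S^*\psi$ for every $\psi$ in its domain, matching the action prescribed by (\ref{K}). Second, I substitute the parameter values into the boundary matrix (\ref{BC1}) from part (ii). The resulting system becomes
\begin{equation*}
\left[ \begin{array}{cccc} -d & -c & -1 & 1  \\ 1 & -1 & 0 & 0 \end{array} \right]
\left[ \begin{array}{l} \psi_-(0) \\ \psi_+(0) \\ \psi_-'(0) \\ \psi_+'(0) \end{array} \right]=0 \, .
\end{equation*}
The second row gives the continuity condition $\psi_+(0) = \psi_-(0)$. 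Substituting this into the first row and rearranging yields $\psi_+'(0) - \psi_-'(0) = (c+d)\psi(0)$. Together these are precisely the boundary conditions characterizing $\DO(\widehat H_{a\delta})$ in (\ref{K}) with $a = c+d$. Hence $\DO_{\max}(\widehat H)$ coincides with $\DO(\widehat H_{(c+d)\delta})$, and since both operators act as $\widehat S^*$, they are equal.

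I do not anticipate any real obstacle, since the statement reduces to a direct specialization of Theorem \ref{CorrH}: the only substantive step is reading off the two boundary conditions from the $2\times 4$ matrix when $b_1=b_2=0$. In particular, notice that the continuity of $\psi$ at $x=0$ emerges automatically from the $\delta'$-row of (\ref{BC1}) via the terms $(b_1-1)$ and $(b_2+1)$, even though no $\delta'$-contribution is present in $\widehat B_\delta$; these terms originate from the operator $\widehat\beta'$ appearing in $\widehat F$ through Theorem \ref{TheoS}.
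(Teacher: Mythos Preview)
Your proof is correct and follows essentially the same approach as the paper's own proof: both identify $\widehat B_\delta$ as the special case $c_1=d$, $c_2=c$, $b_1=b_2=0$ of (\ref{B1}), invoke Theorem~\ref{CorrH}, and read off the two boundary conditions from the resulting $2\times 4$ matrix. (One minor slip in your closing commentary: the second row of (\ref{BC1}) carries the entries $(b_1+1)$ and $(b_2-1)$, not $(b_1-1)$ and $(b_2+1)$; your displayed matrix, however, is correct.)
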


\begin{proof}
The boundary operator $\wh B_{\delta}$ is of the form (\ref{B1})
with $c_1=d$, $c_2=c$ and $b_1=b_2=0$. Hence, from Theorem
\ref{CorrH}, $\wh H \subseteq \wh S^*$ and $\DO_\max(\wh H)$ is
characterized by the two boundary conditions (\ref{BC1}):
$$
 \left\{ \begin{array}{l}
-d \psi_-(0) -c \psi_+(0)  - \psi_-'(0) + \psi_+'(0) =0 \\
\psi_-(0)-\psi_+(0)=0
\end{array} \right.
$$
We conclude that $\wh H$ is exactly the operator (\ref{K}) with
$a=c+d$.
\end{proof}

The $\delta'$-potential is more subtle. We have

\begin{corollary}\label{CorrD'1}
Let $\widehat H=-D^2_x +\widehat B_{\delta',1}$ where $\widehat B_{\delta',1}=c
\widehat\delta_+ ' + d \widehat\delta_- '$, $c,d \in \RE$. Then

(1) For $c=d\not= 1$ the operator $\widehat H$ is given explicitly by (\ref{K'}), with $\theta=\frac{c+1}{1-c}$.

(2) Conversely, all operators $\widehat H_{b\delta',\theta}$,
$\theta \not=-1$ given by (\ref{K'}) can be written in the form
$\widehat H=-D^2_x +\widehat B_{\delta',1}$, with
$c=d=\frac{\theta-1}{\theta+1}$.

(3) The operator $\widehat H$ is of the form (\ref{K'}) only for $c=d\not=1$ and for $c=-d \notin \{-1,1\}$. The family
$c=-d \in \RE \backslash \{-1,1\}$ generates the single operator $\widehat H_{b\delta',\theta}$ with $\theta =1$.
\end{corollary}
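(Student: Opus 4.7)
The plan is to reduce everything to the explicit description of $\DO_{\max}(\wh H)$ given by Theorem \ref{CorrH}, applied to the specific boundary potential $\wh B_{\delta',1} = c\wh\delta_+' + d\wh\delta_-'$. In the notation of (\ref{B1}) this corresponds to $c_1 = c_2 = 0$, $b_1 = d$, $b_2 = c$, so the boundary conditions (\ref{BC1}) collapse to the two linear relations
\[
(d-1)\psi_-'(0) + (c+1)\psi_+'(0) = 0, \qquad (d+1)\psi_-(0) + (c-1)\psi_+(0) = 0.
\]
Each part of the corollary then amounts to comparing these two relations with the pair $\psi_+(0) = \theta\psi_-(0)$, $\theta\psi_+'(0) = \psi_-'(0)$ defining $\DO(\wh H_{b\delta',\theta})$ in (\ref{K'}).

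For part (1), I would set $c = d \neq 1$. When additionally $c \neq -1$, solving the two reduced relations yields $\psi_+(0) = \tfrac{c+1}{1-c}\psi_-(0)$ and $\psi_+'(0) = \tfrac{1-c}{c+1}\psi_-'(0)$, which is exactly $\DO(\wh H_{b\delta',\theta})$ with $\theta = \tfrac{c+1}{1-c}$. The boundary case $c = d = -1$ degenerates to $\psi_+(0) = 0$ and $\psi_-'(0) = 0$, matching $\theta = 0$, in agreement with the same formula $\theta = \tfrac{c+1}{1-c}$ evaluated at $c = -1$. Part (2) is then immediate: inverting this formula gives $c = d = \tfrac{\theta-1}{\theta+1}$, which is well-defined exactly when $\theta \neq -1$, and part (1) closes the loop.

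For part (3) I would enumerate the remaining possibilities for $(c,d) \in \RE^2$. The diagonal $c = d \neq 1$ has already been handled. The excluded point $c = d = 1$ must be shown separately to fall outside (\ref{K'}): the two reduced relations force $\psi_-(0) = 0$ and $\psi_+'(0) = 0$, a separating Dirichlet/Neumann operator that no finite $\theta$ in (\ref{K'}) can reproduce. For the anti-diagonal $d = -c$ the two relations become $(c+1)(\psi_+' - \psi_-')(0) = 0$ and $(c-1)(\psi_+ - \psi_-)(0) = 0$, which for $c \notin \{-1,1\}$ enforce full continuity of $\psi$ and $\psi'$, hence $\wh H = \wh H_0 = \wh H_{b\delta',1}$; whereas for $c = \pm 1$ one of the rows vanishes identically, leaving only one effective condition, which prevents $\wh H$ from being self-adjoint and a fortiori from belonging to (\ref{K'}). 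Finally, for $(c,d)$ off both diagonals, Corollary \ref{CorrInt} (interacting case) and Theorem \ref{TheoSe} (separating case), specialized to $c_1 = c_2 = 0$, rule out self-adjointness altogether.

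The main obstacle I anticipate is the bookkeeping around the degenerate parameter values $c = \pm d = \pm 1$, where one of the two boundary-condition rows becomes trivial and the analysis bifurcates into qualitatively different sub-cases (self-adjoint separating vs.\ merely symmetric; matching $\theta = 0$ vs.\ $\theta = 1$ vs.\ no admissible $\theta$ at all). Once these degenerate cases are sorted out correctly, statements (1)--(3) follow essentially mechanically from Theorem \ref{CorrH}.
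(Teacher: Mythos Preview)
Your proposal is correct and follows essentially the same route as the paper: both reduce to Theorem \ref{CorrH}, read off the pair of boundary conditions $(d-1)\psi_-'(0)+(c+1)\psi_+'(0)=0$ and $(d+1)\psi_-(0)+(c-1)\psi_+(0)=0$, and then compare with the defining conditions of (\ref{K'}). The only place you diverge is in part (3) for the generic off-diagonal case $c\neq\pm d$: the paper argues directly that matching the two boundary conditions with those of (\ref{K'}) forces $\tfrac{d+1}{1-c}=\tfrac{1+c}{1-d}$, which is equivalent to $c=\pm d$; you instead appeal to Corollary \ref{CorrInt} and Theorem \ref{TheoSe} (specialized to $c_1=c_2=0$) to conclude that $\wh H$ fails to be self-adjoint and hence cannot equal any $\wh H_{b\delta',\theta}$. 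Both arguments are short and valid; the paper's is slightly more self-contained, while yours has the small bonus of giving additional information (non-self-adjointness, not merely ``not of the form (\ref{K'})'').
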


\begin{proof}
(1) Since $\widehat B_{\delta',1}$ is of the form (\ref{B1}) with
$c_1=c_2=0$ and $b_1=d$, $b_2=c$, it follows from Theorem
\ref{CorrH} that $\widehat H \subseteq \widehat S^*$ and that
$\DO_\max(\wh H)$ satisfies the boundary conditions (\ref{BC1}):
\begin{equation}\label{BoundaryConditionsK'}
\left\{ \begin{array}{l}
(d-1)\psi_-'(0)+(c+1)\psi_+'(0)=0 \\
\\
(d+1)\psi_-(0) +(c-1) \psi_+(0)=0
\end{array} \right.
\end{equation}
For $c=d\not=1$ we recover the boundary conditions that characterize the domain of the operators $\widehat
H_{b\delta',\theta}$ (given by (\ref{K'}))
$$
\psi_+(0)=\theta \psi_-(0) \quad \mbox{and} \quad \theta \psi_+'(0)
= \psi_-'(0) \quad , \quad \theta \in \RE \backslash \{-1\}
$$
with $\theta =\frac{c+1}{1-c}$.

(2) All the values $\theta \in \RE\backslash \{-1\}$ can be obtained from suitable values of $c$
$$
\theta =\frac{c+1}{1-c} \Longleftrightarrow c =\frac{\theta-1}{\theta+1}.
$$
The case $\theta =-1$ cannot be generated by the family of boundary operators $\widehat B_{\delta',1}$,
even if we consider the more general case $c\not= d$.
This can be easily realized from equation (\ref{BoundaryConditionsK'}).

(3) For $c=-d \in \RE \backslash \{-1,1\}$, eq.(\ref{BoundaryConditionsK'}) yields the boundary conditions that
correspond to the case $\theta=1$.
Finally, when $c=1$ or $d=1$, the boundary conditions (\ref{BoundaryConditionsK'}) are obviously not of the form
(\ref{K'}) and when $c\not=\pm d$ (for $c,d \not=1$) the
operators $-D^2_x +\widehat B_{\delta',1}$ are also {\it not} of the form (\ref{K'}) since
$$
\frac{d+1}{1-c}=\frac{1+c}{1-d} \Longleftrightarrow c=\pm d \, .
$$
\end{proof}

We conclude that for each value of $b=c+d$ the family of operators
$\widehat H=-D^2_x +\widehat B_{\delta',1}$ generates one, and
only one, operator of the form (\ref{K'}). Hence, there is no
hidden structure in the $\delta'$-potential operators $\widehat
B_{\delta',1}$ that may affect the modelling of the operators
$\widehat H_{b\delta',\theta}$. This situation changes when we
consider the more general family $\widehat B_{\delta',2}$. The
extra terms, which yield a zero contribution when acting on smooth
functions, generate for each value of $c+d$ the entire family of
operators (\ref{K'}). This is proved in the next corollary.

\begin{corollary}\label{CorrD'2}
Let $\widehat H=-D^2_x +\widehat B_{\delta',2}$ where $\widehat B_{\delta',2}$ is given by eq.(\ref{Delta'2})
$$
\widehat B_{\delta',2}  =c \widehat\delta'_+  + d
\widehat\delta'_- + eD_x (\widehat\delta_+ -\widehat\delta_- ) + f
(\widehat\delta_+ -\widehat\delta_- ) D_x \quad, \quad c,d,e,f \in
\RE
$$
We then have:

(1) For $c=d$ and $e=f \not=1-c$, the operator $\widehat H$ is given explicitly by (\ref{K'}) with
$\theta=\frac{e-1-c}{e-1+c}$.

(2) Each family of operators of the form $\widehat H=-D^2_x +\widehat B_{\delta',2}$, obtained by fixing the value of $c=d \not=0$ and
letting $e=f \in \RE\backslash \{1-c\}$, contains all the operators (\ref{K'}) with $\theta \in \RE\backslash \{1\}$.

(3) All the operators $\widehat H=-D^2_x +\widehat B_{\delta',2}$ with $c=d=0$ and $e=f \in \RE\backslash \{1\}$
are given explicitly by the operator $\widehat H_{b\delta',\theta}$ with $\theta=1$.

\end{corollary}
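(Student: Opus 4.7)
The plan is to apply Theorem \ref{TheoA} to $\widehat H = \widehat H_0 + \widehat B_{\delta',2}$, following exactly the strategy of Corollary \ref{CorrD'1}. First, for a generic $\psi = \chi_{\RE_-}\psi_- + \chi_{\RE_+}\psi_+ \in \H^2(\RE_-) \oplus \H^2(\RE_+)$, I would expand $\widehat B_{\delta',2}\psi$ using the formulas from Section 3 for $\widehat\delta_{\pm}$, $\widehat\delta'_{\pm}$ and $\widehat\delta_{\pm}D_x$, together with the identity $D_x[\delta(x)\cdot k] = k\,\delta'(x)$ valid for any constant $k$. This yields an expression with a $\delta'(x)$ part and a $\delta(x)$ part whose coefficients are linear in $\psi_{\pm}(0)$ and $\psi_{\pm}'(0)$, respectively.

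Next, I would form $\widehat F = -2\widehat\beta D_x - \widehat\beta' + \widehat B_{\delta',2}$ and collect the two groups of coefficients. Because $\delta$ and $\delta'$ are linearly independent, the requirement $\widehat F\psi = 0$ is equivalent to the pair of boundary conditions
\begin{equation*}
(c+e-1)\,\psi_+(0) + (d-e+1)\,\psi_-(0) = 0, \qquad (f-c-1)\,\psi_+'(0) + (1-d-f)\,\psi_-'(0) = 0 .
\end{equation*}
By Theorem \ref{TheoA}, this pair intersected with $\H^2(\RE_-) \oplus \H^2(\RE_+)$ is precisely $\DO_\max(\widehat H)$, and $\widehat H \subseteq \widehat S^*$.

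The three statements of the corollary are then immediate consequences. Specializing to $c=d$, $e=f$ with $e\neq 1-c$, both boundary equations collapse to $\psi_+(0) = \theta\,\psi_-(0)$ and $\theta\,\psi_+'(0) = \psi_-'(0)$ with $\theta = (e-1-c)/(e-1+c)$, which are exactly the boundary conditions defining $\widehat H_{b\delta',\theta}$ in (\ref{K'}); this proves (1). For (2), with $c = d \neq 0$ fixed, the map $e \mapsto (e-1-c)/(e-1+c)$ is a M\"obius transformation in the variable $e-1$ whose image on $\RE\setminus\{1-c\}$ is precisely $\RE\setminus\{1\}$, and one can invert it explicitly as $e = 1 - c(\theta+1)/(\theta-1)$ for any prescribed $\theta \neq 1$. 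For (3), setting $c=d=0$ makes the formula collapse to $\theta = (e-1)/(e-1) = 1$ for every $e \neq 1$. No deep obstacle is expected; the only care needed is the correct bookkeeping of the $\delta$- and $\delta'$-contributions coming from the two additional pseudo-potential terms $eD_x(\widehat\delta_+ -\widehat\delta_-)$ and $f(\widehat\delta_+ -\widehat\delta_-)D_x$, which is what distinguishes the present situation from Corollary \ref{CorrD'1} and is responsible for the extra freedom that enlarges the image of $\theta$.
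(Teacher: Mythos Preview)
Your proposal is correct and follows essentially the same route as the paper: apply Theorem \ref{TheoA}, expand $\widehat F=-2\widehat\beta D_x-\widehat\beta'+\widehat B_{\delta',2}$ into its $\delta$ and $\delta'$ parts to obtain the two boundary conditions $(c+e-1)\psi_+(0)+(d-e+1)\psi_-(0)=0$ and $(f-c-1)\psi_+'(0)+(1-d-f)\psi_-'(0)=0$, and then specialize and invert exactly as you describe. The paper's boundary conditions and inversion formula are the same as yours up to an overall sign and algebraic rearrangement.
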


\begin{proof}
Since supp $\widehat B_{\delta',2} \psi \subseteq \{0\}$ for all
$\psi \in \DO(\widehat B_{\delta',2})$, it follows from Theorem
\ref{TheoA} that $\wh H \subseteq \wh S^*$. Moreover, also from
Theorem \ref{TheoA}, $\psi \in \DO_\max(\widehat H)$ iff
$$
\psi \in \H^2(\RE_-) \oplus \H^2(\RE_+) \quad \mbox{and} \quad \widehat F_{\delta',2}\psi=0 \, .
$$
The operator $\widehat F_{\delta',2}$ is
$$
\widehat F_{\delta',2}  =  -2 \widehat\beta D_x - \widehat\beta' +c \widehat\delta'_+ + d \widehat\delta'_-
+e D_x \widehat\beta + f  \widehat\beta D_x
$$
and so,
\begin{equation}
\begin{array}{l}
\widehat F_{\delta',2} \psi  = 0\\
\Longleftrightarrow D_x \left[\delta(x) (c \psi_++ d \psi_-
+(e-1)(\psi_+-\psi_-)) \right] \\
\qquad \quad - \delta(x) \left[ c \psi_+' +d \psi_-' - (f-1)(\psi'_+-\psi_-')
\right] = 0 \\
\Longleftrightarrow \left\{ \begin{array}{l}
(1-c-e)\psi_+(0)=(1+d-e)\psi_-(0) \\
\\
(1+c-f)\psi_+'(0) =(1-d-f) \psi_-'(0)
\end{array} \right.
\end{array}
\label{BoundaryConditionsK'2}
\end{equation}
For $c=d$ and $e=f$ we get
$$
 \left\{ \begin{array}{l}
(1-e-c)\psi_+(0)=(1-e+c)\psi_-(0) \\
\\
(1-e+c)\psi_+'(0) =(1-e-c) \psi_-'(0)
\end{array} \right.
$$
For $e\not=1-c$ we recover the boundary conditions for the operators $\widehat H_{b\delta',\theta}$
with $\theta =\frac{1-e+c}{1-e-c}$. This proves (1).

(2) This statement is easily proved by noticing that, for arbitrary fixed $c \not=0$ and $\theta \not=1$,
$$
\theta =\frac{1-e+c}{1-e-c} \Longleftrightarrow e =\frac{\theta(c-1)+1+c}{1-\theta}
$$
and that the solution of the equation satisfies $e\not=1-c$.

(3) It follows directly from (\ref{BoundaryConditionsK'2}) that for $c=d=0$ and $e=f$, we have $\theta
=\frac{1-e}{1-e}=1$, for all values of $e \not=1$.
\end{proof}

Finally, we remark that a boundary pseudo potential operator
formulation of the $\delta$-potential (of the form (\ref{Delta'2})
but for the $\delta$-potential) yields exactly the Schr\"odinger
operators that were determined in Corollary \ref{CorrD}. This can
be easily realized by reproducing the calculations of Corollary
\ref{CorrD'2} for the new boundary operators. Hence, this more
general formulation of the $\delta$-potential does not display an
"inner structure" as the one of the $\delta'$-potential. This
agrees with the results of the norm resolvent limit formulation
(\ref{K}).

\bigskip

\noindent\textbf{Acknowledgements}. The authors would like to
thank the anonymous referee for several insights and useful
suggestions.

Nuno Costa Dias and Jo\~{a}o Nuno Prata have been supported by the
research grant PTDC/MAT-CAL/4334/2014 of the Portuguese Science
Foundation.

Cristina Jorge was supported by the PhD grant SFRH/BD/85839/2012
of the Portuguese Science Foundation.

\end{subsection}

\end{section}

\vspace{2cm}

*******************************************************************

\textbf{Author's addresses:}

\begin{itemize}
\item \textbf{Nuno Costa Dias} and \textbf{Cristina Jorge}:
Departamento de Matem\'{a}tica. Universidade Lus\'{o}fona de
Humanidades e Tecnologias. Av. Campo Grande, 376, 1749-024 Lisboa,
Portugal and Grupo de F\'{\i}sica Matem\'{a}tica, Universidade de
Lisboa, Av. Prof. Gama Pinto 2, 1649-003 Lisboa, Portugal

\item \textbf{Jo\~ao Nuno Prata: }Escola Superior N\'autica
Infante D. Henrique. Av. Eng. Bonneville Franco, 2770-058 Pa\c{c}o
d'Arcos, Portugal and Grupo de F\'{\i}sica Matem\'{a}tica,
Universidade de Lisboa, Av. Prof. Gama Pinto 2, 1649-003 Lisboa,
Portugal

\end{itemize}

\email{(NCD) ncdias@meo.pt}

\email{(CJ) cristina.goncalves.jorge@gmail.com}

\email{(JNP) joao.prata@mail.telepac.pt}

*******************************************************************

\end{document}